\documentclass{article}
\usepackage{graphicx} 
\usepackage{amsmath}
\usepackage{amssymb}
\usepackage{xcolor}
\usepackage{cancel}

\usepackage{enumitem}

\usepackage{amsthm}
\usepackage{MnSymbol}

\theoremstyle{plain}
\newtheorem{theorem}{Theorem}[section]
\newtheorem{lemma}[theorem]{Lemma}

\theoremstyle{definition}
\newtheorem{definition}[theorem]{Definition}
\newtheorem{example}[theorem]{Example}

\newtheorem{remark}[theorem]{Remark}

\DeclareMathOperator{\Config}{Config}
\DeclareMathOperator{\Inv}{Inv}

\DeclareMathOperator{\CNF}{CNF}

\DeclareMathOperator{\Table}{Table}
\DeclareMathOperator{\TAUT}{TAUT}
\DeclareMathOperator{\SAT}{SAT}

\DeclareMathOperator{\flag}{flag}
\DeclareMathOperator{\coord}{coord}

\newcommand{\pc}{\mathsf{pc}}
\newcommand{\pd}{\mathsf{pd}}

\newcommand{\tuple}{\mathbf}

\title{Lower Bounds on Inverse Cellular Automata via~Proof Complexity}
\author{Maryia Kapytka\thanks{Email: \texttt{mkapytka@protonmail.com}}\\
\small{Department of Algebra} \\
\small{Charles University}
}

\begin{document}

\maketitle

\begin{abstract}
We study the complexity of inverse cellular automata on configurations of bounded size. Deciding injectivity in this setting is co-NP-complete by a theorem of Durand~\cite{DURAND1994387}. We give a simpler proof of this theorem by a direct reduction from $\mathrm{UNSAT}$ to this problem, avoiding more complicated intermediate constructions.

We also show that one direction of the reduction can be formalized in the weak theory of bounded arithmetic $V^0$. 

Durand's coNP-completeness result allows one to view inverse cellular automata acting on bounded size configurations  as propositional proofs, cf. Cavagnetto~\cite{Cavagnetto,Cavagnetto2011}, and we prove lower bounds on their size. The proof uses known lower bounds for bounded-depth Frege systems together with the Paris--Wilkie translation of arithmetic proofs into propositional proofs, which allows us to transfer proof complexity lower bounds to our setting.
\end{abstract}

\section{Introduction}

Cellular automata are simple models of systems that change in steps, widely used in computer science and physics. They capture basic features of nature like local interactions, uniform rules, and parallel updates. A key concept is the Garden of Eden, a configuration that cannot arise from any previous state.  The Garden of Eden theorem characterizes when such configurations exist. A cellular automaton is surjective if and only if every configuration has a predecessor; that is, if and only if it has no Garden of Eden configuration.  This is further linked to local injectivity: two finite patterns are called twins if they can be swapped anywhere without affecting future configurations; the automaton is locally injective if it has no twins. The theorem states that surjectivity and local injectivity are equivalent. In two dimensions, Kari~\cite{kari} showed that checking injectivity or surjectivity is undecidable, meaning no algorithm can solve it in general. 

However, the situation changes when we restrict attention to bounded configurations. In this setting, the problem becomes decidable but remains computationally difficult. In particular, Durand~\cite{DURAND1994387} proved that deciding injectivity of two-dimensional cellular automata on bounded configurations is coNP-complete, implying that, unless $\mathrm{P} = \mathrm{NP}$, this problem is computationally hard. His proof uses a complex multi-step reduction from the nondeterministic halting problem via tiling systems.

In this paper, we give a more direct reduction. More precisely, we introduce a polynomial reduction of the coNP-complete set $\mathrm{UNSAT}$ (the set of unsatisfiable CNF formulas) to the set of injective two-dimensional cellular automata with von Neumann neighbourhood on configurations of bounded size. Moreover, we show that one direction of this reduction (namely, that if the formula is satisfiable, then the cellular automaton is not injective) can be formalized in the theory $V^0$, a weak theory of bounded arithmetic. Since $V^0$ is extremely weak, formalizing such a complexity-theoretic result within it is not straightforward. While several unprovability results are known for $V^0$, few non-trivial theorems of this kind have been shown to be provable inside it.

Durand \cite[Sec.~4]{DURAND1994387} posed the question of whether the inverse automaton to a cellular automaton operating on bounded configurations, whose transition function is computed by a simple algorithm, is also computed by a simple algorithm. Cavagnetto \cite[Sec.~3.3]{Cavagnetto} showed, using the computational complexity hypothesis that a one-way function exists, that this is not the case if by an algorithm one means a Boolean circuit. In addition, he showed that Durand’s coNP-completeness results still hold in such a setup. The question of the size of inverse cellular automata for the original representation of the transition function by its table was left open in both \cite{DURAND1994387,Cavagnetto}. We shall use proof complexity to solve this problem.

The motivation for our approach comes from the PhD thesis of Stefano Cavagnetto~\cite{Cavagnetto}, which was later published in~\cite{Cavagnetto2011}. In that work, inverse cellular automata were viewed as propositional proofs in the sense of Cook--Reckhow~\cite{cook1979relative}. We follow this idea.

We show that, when we restrict ourselves to configurations of bounded size, cellular automata in our construction have the property that inverse automata to them cannot be in general of subexponential size. Indeed, their size is exponential, and, in particular, their neighborhood must grow polynomially.

To prove this, we consider the unsatisfiable CNF formula 
\(
\varphi = \lnot \mathrm{ontoPHP}_k
\), which expresses that there is a bijection between $k+1$ and $k$. This formula has size polynomial in $k$ but requires
exponential-size bounded-depth Frege refutations by known strengthenings of a theorem of Ajtai~\cite{Ajtai1988}. Then we use the Paris--Wilkie translation~\cite{ParisWilkie1985} to simulate our inverse cellular automaton proof system by a bounded-depth Frege system $F_d$, thereby transferring this lower bound to our setting.

For background on cellular automata, see for example~\cite{kari}, and for background on bounded arithmetic and proof complexity, see~\cite{krajicek2019}.

\section{Preliminaries}

Throughout the paper we use the notation $[n] := \{0,1,\dots,n-1\}$.

\subsection{Cellular automata}\label{subsec:cellular_automata}

We briefly recall the basic notions of cellular automata that will be needed later.
The definitions are standard and follow~\cite{DURAND1994387}, but we include them here
to fix notation and terminology. We also record a simple bound on the size
needed to encode a two-dimensional cellular automaton with a fixed neighbourhood.

\begin{definition}[Cellular automaton]~\label{def:CA}

A cellular automaton is formally defined as a quadruplet
$(d, S, N, f)$:
\begin{itemize}
    \item The integer $d \geq 1$ is the \emph{dimension} of the space on which the cellular automaton operates.
    \item $S \not = \emptyset$ is a finite set called the set of \emph{states}.
    \item The \emph{neighbourhood} $N$ is a $v$-tuple of distinct vectors of $\mathbb{Z}^d$. For $N = (x_1, . . ., x_v)$, vectors $x_i$ are the relative positions of the neighbour cells with respect to a given center cell. The states of these neighbours are used to compute the new state of the center cell.
    \item The \emph{local function} of the cellular automaton $f:
S^v \to S$ gives the local transition rule.
\end{itemize}
\end{definition}

A \emph{configuration} is a map 
$C : \mathbb{Z}^d \to S$.
For a configuration $C$ and a position $y \in \mathbb{Z}^d$, 
the value $C(y)$ is called the \emph{state of the cell at position $y$}.  
We refer to the position $y$ itself as a \emph{cell}.

If $C : \mathbb{Z}^d \to S$ is a configuration, we identify $C$
with its graph
\[
\{\, ((i,j), C(i,j)) \mid (i,j) \in \mathbb{Z}^d \,\}.
\]

All automata considered in this paper are two-dimensional, i.e. $d=2$.

Sometimes, a state $q$ for which $f(q, q, \dots, q) = q$ is distinguished
in $S$ and is called
a \emph{quiescent} state. A \emph{finite configuration} is a configuration which has only a finite number of cells in a non-quiescent state. If there exist two integers $i$ and $j$ such that all non-quiescent cells of the
configuration are located inside a rectangle with coordinates in $[i] \times [j]$, then we say that the
finite configuration
is \emph{inside $[i] \times [j]$}. We shall be interested in how automata operate on configurations inside a fixed rectangle.

\begin{lemma}(\cite{DURAND1994387})\label{lm:CA_bound}
     For fixed $N$ (of size $v$) the size of a binary string sufficient to code a cellular automaton of dimension $d=2$ is bounded above by $s^v\log s$, where $s$ is the size of $S$.
\end{lemma}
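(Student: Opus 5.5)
The plan is to encode the automaton by writing down its local transition table explicitly. Since $d=2$ and the neighbourhood $N$ is fixed (and so need not be part of the code), the only varying data are the state set $S$ and the local function $f\colon S^v\to S$. First I fix an identification of $S$ with $\{0,1,\dots,s-1\}$. Each state is then written in binary using $\lceil \log s\rceil$ bits, and I treat this as $\log s$, as the statement implicitly does up to rounding.

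Next I order the domain $S^v$ lexicographically, with the coordinate order given by the order of the tuple $N=(x_1,\dots,x_v)$. This gives a canonical list of all $s^v$ neighbourhood patterns. The code of the automaton is the concatenation of the binary codes of $f(a)$, taken over all $a\in S^v$ in this fixed order. Because the order is canonical, the arguments $a$ never need to be written down; their position in the list already determines them. This is the key point that makes the bound work. The resulting string has length exactly $s^v\lceil\log s\rceil$, which gives the claimed bound $s^v\log s$.

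I also check that this string really determines the automaton. Knowing $s$ and $v$, one can cut the string into $s^v$ blocks of equal length, and the $i$-th block gives the value of $f$ on the $i$-th tuple. The value of $s$ can be recovered from the length of the string, or, if one prefers, supplied with a header of $O(\log s)$ bits. Such a header is absorbed into the bound whenever $s\ge 2$; the degenerate case $s=1$ is trivial, since the automaton is then unique. There is no real obstacle here. The only care needed is the rounding convention for $\log s$ and the observation that the fixed $N$ contributes nothing to the code length.
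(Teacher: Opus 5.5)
Your proposal is correct and follows the same route as the paper: you list the values of the local function over the $s^v$ lexicographically ordered neighbourhood patterns, using $\log s$ bits per value, so the arguments never need to be written. Your extra remarks on recovering $s$ from the string length and on the rounding of $\lceil \log s\rceil$ go slightly beyond what the paper checks, but they do not change the argument.
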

\begin{proof}
We have $d=2$ and a fixed neighbourhood N
(lexicographically ordered), so the local function can be represented by a table that has $s^v$ rows. Any value of the function is encoded by $\log s$ bits, so $s^v \log s$ bits in total suffice to encode the cellular automaton.
\end{proof}

\subsection{Propositional proof complexity}

Propositional proof complexity studies the length and structure of proofs of propositional tautologies in different proof systems. In this subsection we recall the notion of
a propositional proof system and introduce Frege and bounded-depth Frege systems. These concepts will later be linked to bounded arithmetic via the Paris--Wilkie translation.

The language we use in this subsection is the standard DeMorgan propositional language, consisting of propositional variables $x, y, z, \dots$, the logical constants $\top$ and $\bot$, the connectives $\neg$, $\wedge$, and $\vee$, and parentheses.

\subsubsection{Proof systems}\label{proof_systems}

Let $\TAUT$ denote the set of propositional tautologies and let $\SAT$ denote the set of satisfiable propositional formulas. We identify formulas with binary strings that encode them.

\begin{definition}[Propositional proof system {\cite[Definition~1.3]{cook1979relative}}]\label{def:proof_system}



A \emph{relational propositional proof system} is a binary relation 
\(Q \subseteq \{0,1\}^* \times \{0,1\}^*\), where $\{0,1\}^*$ denotes the set of all finite binary strings, such that:
\begin{itemize}
    \item \(Q\) is p-time decidable;
    \item for any \(w, \alpha\), if \(Q(w,\alpha)\) holds then \(\alpha \in \mathrm{TAUT}\);
    \item for any \(\alpha \in \mathrm{TAUT}\), there is a \(w \in \{0,1\}^*\) such that \(Q(w,\alpha)\) holds.
\end{itemize}

The second condition gives the soundness of \(Q\) and the third its completeness.
\end{definition} 

We will often shorten the terminology \emph{propositional proof system} to \emph{proof system}.

A \emph{refutation} of a formula $\varphi$ in $Q$ is a proof 
$w \in \{0,1\}^*$ such that 
\[
Q(w,\neg \varphi)
\]
holds. 
Equivalently, a refutation of $\varphi$ is a proof of the tautology 
$\neg \varphi$.

The size of a proof $w$ (respectively, a formula $\varphi$) is the number of symbols it contains.

To compare the strength of different propositional proof systems, we use the notion
of polynomial simulation.

\begin{definition}[p-simulation]
Let $P$ and $Q$ be two propositional proof systems.  
A p-time function $f : \{0,1\}^* \to \{0,1\}^*$ is a \emph{p-simulation} of $Q$ by $P$ if and only if for all strings $w, \alpha$,
\[
    Q(w,\alpha) \;\rightarrow\; P(f(w,\alpha), \alpha).
\]
If such $f$ exists, we say that $P$ \emph{p-simulates} $Q$.
\end{definition}

We now recall the definition of a Frege proof system, a standard example of a propositional proof system.

A formula $B$ is \emph{logically implied} by a formula $A$, written $A \models B$, if every assignment that makes $A$ true also makes $B$ true. 
If $A = A_1 \wedge A_2 \wedge \dots \wedge A_k$ with $k \geq 2$, we write $A_1, A_2, \dots, A_k \models B$ instead of $A_1 \wedge A_2 \wedge \dots \wedge A_k \models B$.  If $k=0$, we write $\models B$ and this means that $B$ is a tautology.

Let $k \ge 0$.  
A \emph{$k$-ary Frege rule} is a $(k+1)$-tuple of formulas $(A_0,A_1,\dots,A_k)$, written
\[
\frac{A_1,\dots,A_k}{A_0},
\]
such that
\[
A_1,\dots,A_k \models A_0.
\]

A $0$-ary Frege rule is called \emph{an axiom}.

\medskip

A standard example of a Frege rule is \emph{modus ponens}:
\[
\frac{p,\;\neg p \vee q}{q}.
\]

Let $F$ be a finite set of Frege rules.
An \emph{$F$-proof} of a formula $C$ from formulas $B_1,\dots,B_\ell$ is a finite
sequence $(D_1,\dots,D_m)$ such that:
\begin{enumerate}
  \item $D_m = C$;
  \item for each $i \le m$, either
  \begin{enumerate}
    \item $D_i = \sigma(A_0)$ for some rule
    \(
      \frac{A_1,\dots,A_k}{A_0} \in F
    \), indices $j_1,\dots,j_k < i$, and substitution $\sigma$ with
    $\sigma(A_r)=D_{j_r}$ for all $r \le k$, or
    \item $D_i \in \{B_1,\dots,B_\ell\}$.
  \end{enumerate}
\end{enumerate}

We write
\[
B_1,\dots,B_l \vdash_F C
\]
if there exists an $F$-proof of $C$ from $B_1,\dots,B_l$. When $F$ is clear from the
context, we omit the subscript.

\medskip

A \emph{Frege proof system} is a finite set $F$ of Frege rules that is
\emph{sound} and \emph{(implicationally) complete}, meaning that for all formulas
$B_1,\dots,B_\ell,C$,
\[
B_1,\dots,B_\ell \vdash_F C \;\Rightarrow\; B_1,\dots,B_\ell \models C
\quad\text{(soundness)}
\]
and
\[
B_1,\dots,B_\ell \models C \;\Rightarrow\; B_1,\dots,B_\ell \vdash_F C
\quad\text{(completeness)}.
\]

A key measure of the logical complexity of propositional formulas is their depth.
Bounded-depth Frege systems are defined by requiring that proofs use only formulas of a depth bounded by an independent constant. We therefore begin by defining the depth of a formula.

\begin{definition}[Depth of a formula]
    The notion of the \emph{depth} of $A$, denoted by $\operatorname{dp}(A)$, is defined by the following inductive definition:
\begin{itemize}
    \item propositional variables and the constants have depth $0$;
    \item $\operatorname{dp}(\lnot B) = \operatorname{dp}(B)$ if $B$ starts with $\lnot$ and $\operatorname{dp}(\lnot B) = 1 + \operatorname{dp}(B)$ otherwise;
    \item if $B(x_1,\ldots,x_t)$ is built from propositional variables and disjunctions only and none of the formulas
    $C_1,\ldots,C_t$ starts with a disjunction, then
    \[
        \operatorname{dp}\!\bigl(B(C_1,\ldots,C_t)\bigr)
        = 1 + \max_i \operatorname{dp}(C_i);
    \]
    \item a condition holds for conjunction analogous to the previous item.
\end{itemize}
\end{definition}

For a Frege system $F$ and $d \ge 0$ we write $F_d$ for the restriction of $F$ in which proofs are allowed to use only formulas of depth at most $d$. Frege systems of this form are called \emph{bounded-depth} (or \emph{constant-depth}) Frege systems. Note that these systems are not complete for all tautologies (as these have unbounded depth), but it is well-known that they are complete for formulas of a bounded depth.

We now give a basic and well-known example from propositional proof complexity.
The pigeonhole principle is a standard source of hard formulas and is often used
to show lower bounds for proof systems, in particular for bounded-depth Frege systems.

\begin{example}[Pigeonhole principle]\label{ex:pigeonhole}
 Consider the following CNF formula $\lnot onto\mathrm{PHP}_n$, consisting of clauses
over propositional variables $p_{ij}$, where \linebreak $i~\in~ [n+1]$ and $j ~\in~[n]$:
\begin{enumerate}
  \item $\bigvee_j p_{ij}$, one clause for each $i$;
  \item $\lnot p_{i_1 j} \lor \lnot p_{i_2 j}$, one clause for each triple
        $i_1 < i_2$ and $j$;
  \item $\lnot p_{i j_1} \lor \lnot p_{i j_2}$, one clause for each triple
        $i$ and $j_1 < j_2$;
  \item $\bigvee_i p_{ij}$, one clause for each $j$.
\end{enumerate}

The variable $p_{ij}$ is intended to express that pigeon $i$ is assigned to hole $j$.
The clauses in (1) and (4) assert that every pigeon is assigned to some hole and that
every hole contains at least one pigeon, respectively. Clauses in (2) and (3) enforce
that no two pigeons are assigned to the same hole and that no pigeon is assigned to
more than one hole.

Clearly, $\neg onto\operatorname{PHP}_n$ is unsatisfiable, since the number of pigeons exceeds the number of holes.  In fact, the formula remains unsatisfiable even if only clauses
(1) and (2) are retained. This weaker principle is known as the \emph{weak pigeonhole
principle}, denoted $\lnot \mathrm{PHP}_n$.

We introduce this example because of the following well-known result due to Ajtai,
which establishes super-polynomial lower bounds for bounded-depth Frege proofs.

\begin{theorem}[Ajtai’s theorem]\cite{Ajtai1988}\label{thm:ajtai}
Let $d \ge 2$ and let $0 < \delta < 5^{-d}$. Then for all sufficiently large $n$ the following holds.

Every $F_d$-refutation of $\lnot \mathrm{ontoPHP}_n$ (that is, every $F_d$-proof of $\bot$ from
\linebreak$\lnot \mathrm{ontoPHP}_n$) contains at least $2^{n^\delta}$ pairwise different subformulas.

In particular, any such refutation has size at least $2^{n^\delta}$.
\end{theorem}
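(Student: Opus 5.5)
This is Ajtai's theorem, in its exponential form due to Krajíček--Pudlák--Woods and Pitassi--Beame--Impagliazzo. I would prove it by the standard random restriction and switching lemma method. The argument is by contradiction. Suppose $\pi$ is an $F_d$-refutation of $\lnot\mathrm{ontoPHP}_n$ with fewer than $2^{n^\delta}$ distinct subformulas. The restrictions used are partial matchings $\rho$ between pigeons $[n+1]$ and holes $[n]$, applied as follows:
\begin{itemize}
\item if $(i,j)\in\rho$, set $p_{ij}=1$;
\item if pigeon $i$ or hole $j$ is matched elsewhere in $\rho$, set $p_{ij}=0$;
\item otherwise leave $p_{ij}$ free.
\end{itemize}
Under such a $\rho$, $\lnot\mathrm{ontoPHP}_n\restriction\rho$ is, after renaming, $\lnot\mathrm{ontoPHP}_{m}$ on the unmatched pigeons and holes. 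Restricting every line of $\pi$ therefore gives an $F_d$-refutation of a smaller instance.

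\textbf{Step 1: matching decision trees.} These trees query "which hole does pigeon $i$ go to?" or "which pigeon goes to hole $j$?" and extend the current partial matching. I would work with $k$-evaluations: a map $T$ that assigns to each subformula $\psi$ of $\pi$ a matching decision tree $T(\psi)$ of height at most $k$, with leaves labelled $0/1$. The requirements are:
\begin{itemize}
\item variables get the obvious tree;
\item $T(\lnot\psi)$ is $T(\psi)$ with its leaves flipped;
\item $T(\bigvee_i\psi_i)$ has a $1$-branch exactly where some $T(\psi_i)$ does, in the sense of representing the disjunction of the $1$-branches as a $k$-DNF over matching terms;
\item every axiom of $\lnot\mathrm{ontoPHP}$ gets a tree all of whose leaves are $1$.
\end{itemize}

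\textbf{Step 2: soundness lemma.} If $k$ is much smaller than $m$, then every line of an $F_d$-proof from the axioms receives a tree with all leaves $1$. The proof is by induction on the proof. For each Frege rule one checks that any branch refuting the conclusion can be extended, because $k$-bounded matchings on $m$ holes are always extendable, to a branch refuting some premise. It follows that $\bot$ cannot appear, which is the contradiction.

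\textbf{Step 3: constructing the evaluation (main obstacle).} This is where the PHP switching lemma is needed. Build $\rho$ in $d$ rounds. At round $\ell$, choose a random partial matching leaving about $n^{\epsilon^\ell}$ pigeons unmatched, with for instance $\epsilon=1/5$. Then show that any matching-disjunction of height-$k$ trees collapses to a single tree of height at most $s$ with failure probability at most $(c\,k\,s/m^{\alpha})^{s}$. Take $s=k=n^{\delta'}$ and a union bound over the fewer than $2^{n^\delta}$ subformulas at each depth. This shows a good $\rho$ exists, giving a $k$-evaluation on the surviving $m\ge n^{5^{-d}}$ pigeons, with $k\ll m$ since $\delta<5^{-d}$.

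The delicate part is the counting proof of the switching lemma for matchings. Bad restrictions are encoded injectively by a restriction with fewer free pigeons plus short auxiliary information, using the canonical decision tree construction. Handling the dependencies inside matchings, unlike independent variable assignments, is what forces the $5^{-d}$ exponent. Finally, since the number of distinct subformulas is at most the size of $\pi$, the size bound follows.
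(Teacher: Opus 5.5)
The paper gives no proof of this statement. It is quoted from the literature (Ajtai's theorem in the exponential form of Kraj\'{\i}\v{c}ek--Pudl\'ak--Woods and Pitassi--Beame--Impagliazzo, with the $5^{-d}$ parametrisation as in Kraj\'{\i}\v{c}ek's book) and used only as a black box for the final lower bound; your outline (matching decision trees and $k$-evaluations, the soundness lemma for Frege rules when $k$ is well below the number of free holes, and $d$ rounds of random partial matchings with the PHP switching lemma plus a union bound over distinct subformulas) is exactly the standard proof behind that citation. Two points are schematic rather than wrong: the all-$1$ trees for the clauses of $\lnot\mathrm{ontoPHP}$ follow from the representation condition once $k$ is small (a short $0$-branch cannot be incompatible with all the relevant edges) rather than being an extra requirement, and your bound $(c\,k\,s/m^{\alpha})^{s}$ has to be replaced by the precise switching lemma for partial matchings, whose parameters are what force the per-round shrinking and hence the exponent $5^{-d}$.
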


\end{example}

\subsection{Bounded arithmetic}

\subsubsection{$I\Delta_0$ and $I\Delta_0(R)$}\label{I_delta_zero}

Throughout this subsection we work in the language of Peano arithmetic
\[
L_{PA}=\{0,1,+,\cdot,\le\}.
\]
A \emph{bounded quantifier} is a quantifier of the form
\[
\exists y \le t(\tuple x)\, A(\tuple x,y)
\quad\text{or}\quad
\forall y \le t(\tuple x)\, A(\tuple x,y),
\]
where $t(\tuple x)$ is an $L_{PA}$-term. It is an abbreviation for the formulas
\[
\exists y\,(y \le t(\tuple x)\wedge A(\tuple x,y))
\quad\text{and}\quad
\forall y\,(y \le t(\tuple x)\to A(\tuple x,y)),
\]
respectively.

The class of \emph{bounded formulas} $\Delta_0$ is then defined as the smallest class of
$L_{PA}$-formulas that contains all quantifier-free formulas and is closed under
DeMorgan connectives and bounded quantifiers.

\begin{definition}[$I\Delta_0$]\label{def:i_delta_zero}
The theory $I\Delta_0$ is the theory in $L_{PA}$ axiomatized by Robinson's arithmetic $Q$:
\[
\begin{array}{lll}
\mathbf{B1}. & x + 1 \neq 0 \\[2pt]
\mathbf{B2}. & x + 1 = y + 1 \rightarrow x = y \\[2pt]
\mathbf{B3}. & x + 0 = x \\[2pt]
\mathbf{B4}. & x + (y+1) = (x+y) + 1 \\[2pt]
\mathbf{B5}. & x \cdot 0 = 0 \\[2pt]
\mathbf{B6}. & x \cdot (y+1) = (x \cdot y) + x \\[6pt]
\end{array}
\qquad
\begin{array}{lll}
\mathbf{B7}. & (x \le y \land y \le x) \rightarrow x = y \\[2pt]
\mathbf{B8}. & x \le x + y \\[2pt]
\mathbf{B9}. & 0 \le x \\[2pt]
\mathbf{B10}. & x \le y \;\lor\; y \le x \\[2pt]
\mathbf{B11}. & x \le y \;\leftrightarrow\; x < y + 1 \\[2pt]
\mathbf{B12}. & x \neq 0 \;\rightarrow\; \exists y \le x \,(y+1 = x) \\[6pt]
\end{array}
\]

and by the axiom scheme of induction $IND$, where $A$ is a $\Delta_0$-formula:

\[
\neg A(0) \vee (\exists y<x \wedge A(y) \wedge \neg A(y+1)) \vee A(x).
\]

(We implicitly assume that all free variables are universally quantiﬁed.)
\end{definition}

Extend the language $L_{PA}$ by a new binary relation symbol $R(x,y)$, and let
$\Delta_0(R)$ be the class of bounded formulas in the extended language.
Define $I\Delta_0(R)$ as Robinson’s arithmetic $Q$ together with the induction
scheme $IND$ restricted to $\Delta_0(R)$-formulas.

In our application we will need to work with several additional relation symbols and constants.   
Let $L_{PA}(\alpha)$ denote the language obtained from $L_{PA}$ by adding an arbitrary countable collection of new relation symbols and constants, where $\alpha$ serves as a placeholder for this extension. We define the theory $I\Delta_0(\alpha)$ to consist of Robinson’s arithmetic $Q$ together with the induction scheme $IND$ for all $\Delta_0(\alpha)$-formulas.

\subsubsection{$V^0$}

The following subsection is based on~\cite[Section~5.1]{Cook_Nguyen_2010}. We introduce theory $V^0$ because it provides a convenient framework for formalizing arguments about the existence of sets. An important property of $V^0$ is that it is conservative over $I\Delta_0(\alpha)$. This means that every sentence in the language $\Delta_0(\alpha)$ that can be proved in $V^0$ can already be proved in $I\Delta_0(\alpha)$. 

This conservativity allows us to carry out certain arguments within $V^0$, and then transfer the resulting statements back to $I\Delta_0(\alpha)$. This will make several proofs in later sections simpler.

Two-sorted first-order logic is an extension of the usual single-sorted first-order
logic. Here there are two kinds of variables:
\begin{itemize}
    \item \emph{Number variables} $x, y, z, \dots$ range over natural numbers.
\item  \emph{Set variables} $X, Y, Z, \dots$ range over finite subsets of natural numbers.
\end{itemize}

The language of $V^0$ is

\[
\mathcal{L}^2_A = \{\, 0,1,+,\cdot,\mid\mid \ ;\ =_1, =_2, \le, \in \, \}.
\]

Here:
\begin{itemize}
    \item $0,1,+,\cdot,=_1$ come from $\mathcal{L}_{PA}$; $=_1$ corresponds to $=$ of $\mathcal{L}_{PA}$
    \item $\lvert X \rvert$ (the “length of $X$”) is a number-valued function and is intended to denote
the least upper bound of the set $X$ (essentially the largest element plus one)
\item $\in$ is the usual membership predicate
\item $=_2$ is the equality for sets
\end{itemize}
 We will write $=$ for both $=_1$ and $=_2$, the meaning
will be clear from the context.

We will use the abbreviation
$X(t)$ to denote $t \in X$. Thus we may think of $X(i)$ as the
$i$-th bit of the binary string $X$.

The theory $2$-BASIC extends basic arithmetic to sets. 

\begin{definition}[$2$-BASIC]
The theory $2$-BASIC consists of the axioms \linebreak $\mathbf{B1\text{--}B12}$ from
Definition~\ref{def:i_delta_zero}, together with the following axioms
for bounded sets:
\[
\begin{array}{lll}
\mathbf{L1}. & X(y) \;\rightarrow\; y < |X|, \\[4pt]
\mathbf{L2}. & y + 1 = |X| \;\rightarrow\; X(y), \\[4pt]
\mathbf{SE}. &
\Bigl(\,|X| = |Y| \;\land\; \forall i < |X|\bigl(X(i) \leftrightarrow Y(i)\bigr)\Bigr)
\;\rightarrow\; X = Y .
\end{array}
\]
\end{definition}

These axioms formalize basic properties of finite sets: the elements of a set are within its length, the last element determines the size of the string the set represents, and sets are equal if they have the same length and elements.

\begin{definition}[$\Sigma_0^B$-formulas]
$\Sigma_0^B$ is the set of $\mathcal{L}_A^2$-formulas whose
only quantifiers are bounded number quantifiers (there can be free string variables).

\end{definition}

Similarly as bounded (number) quantifiers, \emph{bounded set quantifiers}  are quantifiers of the form
\[
\exists X \le t(\tuple x, \tuple Y)\, A(\tuple x, \tuple Y, X)
\quad\text{and}\quad
\forall X \le t(\tuple x, \tuple Y)\, A(\tuple x, \tuple Y, X),
\]
where $t(\tuple x, \tuple Y)$ is an $\mathcal{L}_A^2$-term. It is an abbreviation for the formulas
\[
\exists X\,\bigl(|X| \le t(\tuple x, \tuple Y)\wedge A(\tuple x, \tuple Y, X)\bigr)
\quad\text{and}\quad
\forall X\,\bigl(|X| \le t(\tuple x, \tuple Y)\to A(\tuple x, \tuple Y, X)\bigr),
\]
respectively.

\begin{definition}[Bounded Comprehension Axiom]
The bounded comprehension axiom scheme for $\Sigma^B_0$, denoted by $\Sigma^B_0\text{-COMP}$, is the set of all formulas
\[
\exists X \le y \; \forall z < y \bigl( X(z) \leftrightarrow \varphi(z) \bigr),
\]
where $\varphi(z)$ is any formula in $\Sigma^B_0$, and $X$ does not occur free in $\varphi(z)$.

In the above definition $\varphi(z)$ may have free variables of both sorts, in addition to $z$.
\end{definition}

\begin{definition}[$V^0$]
The theory $V^0$ has the language $\mathcal{L}^2_A$ and is axiomatized by $2$-BASIC and $\Sigma^B_0\text{-COMP}$.
\end{definition}

Recall that for theories $T \subseteq S$, we say that $S$ is a \emph{conservative extension} of $T$ if, for every formula $\varphi$ in the language of $T$, the implication
\[
S \vdash \varphi \;\Rightarrow\; T \vdash \varphi
\]
holds.

\begin{theorem}\label{thm:conservative_extension}
The theory $V^0$ is a conservative extension of $I\Delta_0(\alpha)$.
\end{theorem}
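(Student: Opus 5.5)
We would prove the theorem by the standard model-theoretic argument of Cook--Nguyen, with the relation symbols and constants of $\alpha$ interpreted as subsets of the number universe. First we fix the interpretation of the statement. An $L_{PA}(\alpha)$-formula is read in $V^0$ by replacing each new relation symbol $R$ of arity $k$ by a fresh set variable $X_R$, with $R(x_1,\dots,x_k)$ translated as $X_R(\langle x_1,\dots,x_k\rangle)$ for a $\Sigma_0^B$-definable pairing function. Each new constant becomes a free number variable. Under this reading, $\Delta_0(\alpha)$-formulas become $\Sigma_0^B$-formulas.

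\textbf{Step 1: $V^0$ contains $I\Delta_0(\alpha)$.} Let $\varphi(z)$ be a $\Sigma_0^B$ formula. Apply $\Sigma_0^B$-COMP to get a set $X$ with $|X|\le x+1$ such that $X(z)\leftrightarrow \neg\varphi(z)$ for all $z\le x$. Suppose $\varphi$ fails at some $z\le x$. Then $X$ is nonempty, and axioms $\mathbf{L1}$, $\mathbf{L2}$ together with $\Sigma_0^B$-COMP give a least element of $X$; this is the usual derivation of $\Sigma_0^B$-MIN in $V^0$. This least element witnesses the failure of the induction step, which gives the $IND$ scheme.

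\textbf{Step 2: conservativity.} Suppose $V^0\vdash\varphi$ but $I\Delta_0(\alpha)\nvdash\varphi$. Then there is a model $M\models I\Delta_0(\alpha)+\neg\varphi$, with its interpretations of the relations and constants. We build a two-sorted structure $\mathcal M$:
\begin{itemize}
\item the number sort is $M$ itself;
\item the set sort consists of all subsets of $M$ that are bounded (contained in some $[0,a)$ with $a\in M$) and definable in $M$ by a $\Delta_0(\alpha)$-formula with parameters;
\item $|X|$ is the least strict upper bound of $X$.
\end{itemize}
The relations $R^M$ appear among the sets, after restriction to a bound, which suffices since $\varphi$ only mentions them at bounded arguments; alternatively we replace $R$ by its restrictions to large bounds. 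The basic axioms $\mathbf{B1}$--$\mathbf{B12}$ hold in $\mathcal M$ because they hold in $M$, and $\mathbf{SE}$ holds because sets are extensional. $\Sigma_0^B$-COMP holds because a $\Sigma_0^B$ formula whose set parameters are $\Delta_0(\alpha)$-definable is itself equivalent in $M$ to a $\Delta_0(\alpha)$ formula with parameters, obtained by substituting the defining formulas. Finally, $\mathcal M\models\neg\varphi$ because $\varphi$ is evaluated identically in $M$ and in $\mathcal M$. This contradicts $V^0\vdash\varphi$.

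\textbf{Main obstacle.} The delicate point is verifying $\mathbf{L1}$, $\mathbf{L2}$ and the existence of $|X|$. For this we need that every nonempty bounded $\Delta_0(\alpha)$-definable set has a largest element in $M$. We get it from $\Delta_0(\alpha)$-induction in $M$ (equivalently the least number principle) applied to the formula ``no element of $X$ lies above $y$''. A secondary technicality is making precise the sense in which the unbounded relations of $\alpha$ become bounded sets. The cleanest fix is to observe that any given proof uses only finitely many symbols, each occurring at arguments bounded by terms. A fully formal alternative is a cut-elimination (Herbrand-style) argument showing that $V^0$-proofs of $\Sigma_0^B$ statements have $\Sigma_0^B$-free-cut-free form, but we would use the model-theoretic route.
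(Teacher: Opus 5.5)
Your proposal is correct and is the standard model-expansion proof of this fact: you expand a model of $I\Delta_0(\alpha)$ by its bounded subsets that are definable by $\Delta_0(\alpha)$-formulas with parameters, obtain $|X|$ from the $\Delta_0(\alpha)$ least number principle, and reduce $\Sigma^B_0$-COMP to $\Delta_0(\alpha)$-definability. The paper gives no proof of its own and simply defers to Cook--Nguyen, so there is no competing route to compare against. The one thing worth stating explicitly is that, for formulas mentioning new relation symbols, the conservativity you prove (and the one that actually holds) is the $\Delta_0(\alpha)$ form described in the paper's own gloss, not the unrestricted one. For instance, for a unary $R$ in $\alpha$, $V^0$ proves $\exists y\,\forall x\,(X_R(x)\rightarrow x<y)$ via $y=|X_R|$, while $I\Delta_0(R)$ does not prove $\exists y\,\forall x\,(R(x)\rightarrow x<y)$, so your step of restricting $R^M$ to a bound is legitimate precisely because $\varphi$ is bounded.
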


In other words, extending $I\Delta_0(\alpha)$ by set variables and the bounded comprehension scheme does not yield new theorems that are purely first-order, i.e. do not use second-order quantifiers. This fact will be used repeatedly: it allows us to apply standard results about $\Delta_0(\alpha)$-formulas proved in $I\Delta_0(\alpha)$ directly within $V^0$, thereby simplifying many of our arguments.

\subsubsection{Coding of finite relations and functions}\label{coding_relations_functions}
Note that $V^0$ enables us to talk about finite subsets of natural numbers. In order to be able to discuss finite relations and bounded functions, we introduce their coding in $V^0$.

In order to define finite subsets of $\mathbb{N}^k$ for $k\geq 2$, we use the usual \emph{pairing function} 
\[
\langle x, y \rangle \;:=\; \frac{(x+y)(x+y+1)}{2} + x, 
\]
which is a bijection between $\mathbb{N} \times \mathbb{N}$ and $\mathbb{N}$, with the
projections simply definable with a bounded graph:
\[
p_1(z) = x 
\quad\text{if and only if}\quad 
\exists y \le z \; \langle x, y \rangle = z,
\]

\noindent and similarly for $p_2(z) = y$. Iterating this, we can encode $k$-tuples of numbers
for any fixed $k \ge 2$. Using this coding, a finite relation $R \subseteq \mathbb{N}^k$ is represented as a
finite set of natural numbers coding the tuples in $R$.

Using the pairing function we can also code finite sequences of natural
numbers. A finite sequence
\[
(a_0,a_1,\dots,a_{k-1})
\]
is represented by the finite set
\[
S = \{\langle 0,a_0\rangle,\langle 1,a_1\rangle,\dots,\langle k-1,a_{k-1}\rangle\}.
\]

Thus finite sequences can be treated as finite relations
\(S \subseteq \mathbb{N}^2\) that satisfy the usual uniqueness condition
\[
\forall i\,\forall y_1,y_2\;
\bigl((\langle i,y_1\rangle \in S \wedge \langle i,y_2\rangle \in S)
\rightarrow y_1=y_2\bigr).
\]

Functions are represented by their graphs.
The \emph{graph of a function \(G\)} is a finite relation
\(G \subseteq M \times N\), where \(M \subseteq \mathbb{N}^k\) and
\(N \subseteq \mathbb{N}^l\) are finite sets for some \(k,l \in \mathbb{N}\).
The relation \(G\) is required to satisfy the usual conditions of
\emph{uniqueness} and \emph{totality}:
\[
\begin{aligned}
\text{(Uniqueness)}\qquad 
&\forall x \in M\, \forall y_1, y_2 \in N\;
   \bigl((x,y_1) \in G \wedge (x,y_2) \in G \;\rightarrow\; y_1 = y_2\bigr), \\[6pt]
\text{(Totality)}\qquad 
&\forall x \in M\, \exists y \in N\; (x,y) \in G.
\end{aligned}
\]

\subsection{Propositional translation of bounded arithmetic}

In this subsection we explain how statements of bounded arithmetic are translated into
propositional formulas and how this translation connects provability in $V^0$ with
propositional proof complexity. First, we recall the Paris--Wilkie translation, which
associates to each bounded arithmetic formula a propositional
formula of polynomial size and bounded depth.

\subsubsection{Paris-Wilkie translation}

The translation applies to all bounded formulas $\Sigma^B_0$ with any number of free set variables. To keep the notation simple we shall consider just the example when the formula contains an unspecified binary relation $R$ as that is the case needed for the translation of the pigeonhole principle. 

The Paris–Wilkie translation \cite[Section~8.2]{krajicek2019} assigns to any $\Delta_0(R)$-formula 
$A(x_1, \ldots, x_k)$ and any $n_1, \ldots, n_k \ge 0$ a DeMorgan propositional formula
\[
\langle A(\tuple{x}) \rangle_{n_1, \ldots, n_k}
\]
defined by induction on the logical complexity of $A$ as follows.

\begin{enumerate}
    \item If $B$ is one of the atomic formulas $t(\tuple{x}) = s(\tuple{x})$ or $t(\tuple{x}) \le s(\tuple{x})$, with $t$ and $s$ terms, then
    \[
    \langle B \rangle_{n_1, \ldots, n_k} :=
    \begin{cases}
        1 & \text{if } B(n_1, \ldots, n_k) \text{ is true,} \\
        0 & \text{otherwise.}
    \end{cases}
    \]
    
    \item If $B$ is the atomic formula $R(t(\tuple{x}), s(\tuple{x}))$ and $i$ and $j$ are the values of the terms $t(\tuple{x})$ and $s(\tuple{x})$ for $\tuple{x} := \tuple{n}$, respectively, then put
    \[
    \langle B \rangle_{n_1, \ldots, n_k} := r_{ij},
    \]
    where $r_{ij}$ are propositional variables.
    
    \item $\langle \ldots \rangle$ commutes with $\neg, \lor, \land.$
    
    \item If $A(\tuple{x}) = \exists y \le t(\tuple{x}) B(\tuple{x}, y)$ then
    \[
    \langle A(\tuple{x}) \rangle_{n_1, \ldots, n_k} := 
    \bigvee_{m \le t(\tuple{n})}
    \langle B(\tuple{x}, y) \rangle_{n_1, \ldots, n_k, m}.
    \]
    
    \item If $A(\tuple{x}) = \forall y \le t(\tuple{x}) B(\tuple{x}, y)$ then
    \[
    \langle A(\tuple{x}) \rangle_{n_1, \ldots, n_k} := 
    \bigwedge_{m \le t(\tuple{n})}
    \langle B(\tuple{x}, y) \rangle_{n_1, \ldots, n_k, m}.
    \]
\end{enumerate}

\begin{lemma}
Let $A(\tuple{x})$ be a $\Delta_0(R)$-formula. Then there are $c, d \ge 1$ such that, for all $n_1, \ldots, n_k,$
\begin{itemize}
    \item $|\langle A(\tuple{x}) \rangle_{n_1, \ldots, n_k}| \le (n_1 + \cdots + n_k + 2)^c,$
    \item $\mathrm{dp}(\langle A(\tuple{x}) \rangle_{n_1, \ldots, n_k}) \le d.$
\end{itemize}

\noindent Moreover, $A(n_1, \ldots, n_k)$ is true for all interpretations of $R$ if and only if 
\\$\langle A(\tuple{x}) \rangle_{n_1, \ldots, n_k}\in \mathrm{TAUT}.$
\end{lemma}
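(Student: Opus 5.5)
I will argue by structural induction on the formula $A$, proving the size and depth bounds and the semantic correspondence simultaneously. For the correspondence I will prove a stronger statement: for every interpretation $R \subseteq \mathbb{N}^2$, let $\rho_R$ be the truth assignment with $\rho_R(r_{ij}) = 1$ iff $(i,j)\in R$. Then $A(n_1,\dots,n_k)$ holds in $(\mathbb{N},R)$ iff $\rho_R$ satisfies $\langle A\rangle_{n_1,\dots,n_k}$. The ``moreover'' part follows at once, because every assignment to the finitely many variables $r_{ij}$ occurring in the translation is of the form $\rho_R$ for some $R$ (put $(i,j)\in R$ exactly when $r_{ij}$ is true). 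The stronger statement is immediate from the definition by induction. Atomic arithmetic formulas translate to their truth value, $R(t,s)$ translates to $r_{ij}$ with $i,j$ the values of the terms, and connectives commute. A bounded quantifier $\exists y\le t(\tuple x)$ becomes the disjunction over exactly the witnesses $m\le t(\tuple n)$, which are the only candidates in the standard model; the universal case is dual.

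For the depth bound, I note that each clause of the translation adds at most a constant to the depth. Atoms have depth $0$, negation adds at most $1$, and a block of $\vee$ or $\wedge$ (including the big disjunction or conjunction coming from a quantifier) adds at most $1$ over the maximum depth of its arguments. So I can take $d$ to be the number of logical symbols in $A$ plus one, independent of $\tuple n$.

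The size bound is the only step with some content. Any $L_{PA}$-term $t(\tuple x)$ is a polynomial with natural-number coefficients. Hence there is a constant $c_t$ with $t(\tuple n)\le (n_1+\dots+n_k+2)^{c_t}$; the $+2$ makes the base at least $2$, so constants and the case $\tuple n = \tuple 0$ are absorbed. I then prove by induction that $|\langle A\rangle_{\tuple n}|\le (\sum n_i+2)^{c_A}$.

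The case I expect to be the main obstacle is the quantifier. For $A=\exists y\le t\,B(\tuple x,y)$ the translation is a disjunction of $t(\tuple n)+1$ formulas $\langle B\rangle_{\tuple n,m}$ with $m\le t(\tuple n)$. In each of these formulas, by the induction hypothesis for $B$ (with the variable $y$ added), the size is at most $(\sum n_i + m + 2)^{c_B}\le (\sum n_i+2)^{c_t c_B+c_B}$ roughly. This uses $\sum n_i + m + 2 \le (\sum n_i+2)^{c_t}+(\sum n_i+2)\le (\sum n_i+2)^{c_t+1}$. Multiplying by the number of disjuncts and the $O(t(\tuple n))$ extra symbols for connectives and parentheses gives a polynomial bound with a new exponent $c_A$ depending only on $c_t$ and $c_B$. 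The connective cases just add sizes plus a constant, which is again bounded by a suitable power. Finally I take $c$ to be the exponent obtained for $A$ itself.
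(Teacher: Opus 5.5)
Your proof is correct. The paper gives no proof of this lemma; it imports it together with the Paris--Wilkie translation from \cite[Section~8.2]{krajicek2019}, and your argument is the standard induction on the logical complexity of $A$ used there, with polynomial bounds on term values, each bounded quantifier contributing at most $t(\tuple n)+1$ polynomial-size disjuncts and at most one level of depth, and the semantic correspondence via the assignment $\rho_R$.
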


The key result about the Paris–Wilkie translation is that translations \linebreak $\langle A(\tuple{x}) \rangle_{n_1, \ldots, n_k}$
of bounded formulas $A(\tuple{x})$ whose universal closure is provable in $I\Delta_0(\alpha)$ have constant depth and poly-size Frege proofs. We shall formulate the theorem for $V^0$; this follows from Paris-Wilkie theorem~\cite{ParisWilkie1985} 
using the fact that $V^0$ is conservative over $I\Delta_0(\alpha)$.

\begin{theorem}\label{thm:paris_wilkie_v0}
Let $F$ be a Frege system in the DeMorgan language. Let $A(\tuple{x})$ be a $\Sigma^B_0$-formula and assume that $V^0$ proves $\forall \tuple{x}\, A(\tuple{x})$. Then there are $c, d \ge 1$ such that for each $k$-tuple $n_1, \ldots, n_k$ there is an $F$-proof $\pi_{n_1, \ldots, n_k}$ of $\langle A(\tuple{x}) \rangle_{n_1, \ldots, n_k}$ such that
\[
\mathrm{dp}(\pi_{n_1, \ldots, n_k}) \le d 
\quad \text{and} \quad
|\pi_{n_1, \ldots, n_k}| \le (n_1 + \cdots + n_k + 2)^c.
\]
\end{theorem}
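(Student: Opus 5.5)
The plan is to reduce Theorem~\ref{thm:paris_wilkie_v0} to the classical Paris--Wilkie theorem for $I\Delta_0(\alpha)$ using Theorem~\ref{thm:conservative_extension}. The one subtlety is that $A(\tuple x)$ is a two-sorted $\Sigma^B_0$-formula with free set variables. It therefore lives in $\mathcal{L}^2_A$, not in $L_{PA}(\alpha)$, and conservativity cannot be applied to it verbatim.

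The first step is to translate $A$ into the one-sorted setting. For each free set variable $X$ of $A$, introduce a fresh unary relation symbol $R_X$ and a fresh constant $\ell_X$ intended to denote $|X|$, with $R_{X}$ made binary with a dummy argument if one wants to stay literally with the binary $R$ of the translation. Replace every atom $X(t)$ by $R_X(t)$ and every occurrence of the term $|X|$ by $\ell_X$. This yields a $\Delta_0(\alpha)$-formula $A^*(\tuple x)$. Let $L_X$ denote the formula expressing axioms $\mathbf{L1}$ and $\mathbf{L2}$ for $R_X,\ell_X$, namely $\forall y\,(R_X(y)\to y<\ell_X)$ and $(\ell_X=0 \vee R_X(\ell_X-1))$. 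I would then show that
\[
V^0 \vdash \forall \tuple x\, A(\tuple x)
\quad\text{implies}\quad
I\Delta_0(\alpha)\vdash \bigwedge_X L_X \to \forall \tuple x\, A^*(\tuple x).
\]
This is the content of the conservativity statement Theorem~\ref{thm:conservative_extension}, read in the usual way (cf.~\cite{Cook_Nguyen_2010}): any model of $I\Delta_0(\alpha)$ with interpretations of $R_X,\ell_X$ satisfying $L_X$ expands to a model of $V^0$ by taking all $\Delta_0$-definable bounded sets, in which $X$ is interpreted by $R_X$. I expect this bookkeeping to be the main obstacle, in particular checking that $\Sigma^B_0$-COMP holds in the expansion and that the $\Delta_0(\alpha)$-induction there covers the parameters $R_X$. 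I will rely on the cited statement rather than reprove it.

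The second step applies the original Paris--Wilkie theorem~\cite{ParisWilkie1985} to the $\Delta_0(\alpha)$-formula $\bigwedge_X L_X \to A^*(\tuple x)$, with $\ell_X$ treated as an additional universally quantified number variable. This gives constants $c',d'$ and $F$-proofs of depth at most $d'$ and polynomial size of the translations $\langle \bigwedge_X L_X\to A^*\rangle_{\tuple n,\tuple m}$ for all $\tuple n$ and all values $\tuple m$ of the $\ell_X$. In the propositional translation of $A$ itself, the length $|X|$ is a fixed parameter and the bits $X(i)$ become variables; this is precisely what $\langle A^*\rangle$ produces once $\ell_X$ is fixed to $m_X$. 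Instantiating at the intended lengths therefore gives $\langle A\rangle_{\tuple n} = \langle A^*\rangle_{\tuple n,\tuple m}$ up to renaming of variables.

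The final step discharges the hypothesis $\langle L_X\rangle$. Its translation is a conjunction of the literals $\neg r_{y}$ for $y\ge m_X$ in the relevant range, together with $r_{m_X-1}$. Under the convention that atoms $X(t)$ with $t\ge |X|$ translate to $0$ and $X(|X|-1)$ to $1$, these become constants. A constant-depth, polynomial-size Frege derivation then simplifies the hypothesis to $\top$ and applies modus ponens to yield $\langle A\rangle_{\tuple n}$. The sizes stay bounded by $(n_1+\dots+n_k+2)^{c}$ for suitable $c$, because the $m_X$ are bounded by terms in $\tuple n$ (or are counted among the parameters), and the depth increases by only a constant, giving $d$.
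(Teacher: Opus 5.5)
Your route is the one the paper intends; it gives only the one-line justification ``Paris--Wilkie plus conservativity of $V^0$ over $I\Delta_0(\alpha)$''. Your replacement of each set variable by a relation $R_X$ and a length constant $\ell_X$, together with the model-expansion reading of Theorem~\ref{thm:conservative_extension}, is the right way to make that sentence precise. However, your second step fails as written.

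The first conjunct of $L_X$, namely $\forall y\,(R_X(y)\to y<\ell_X)$, has an unbounded quantifier. So $\bigwedge_X L_X\to A^*(\tuple x)$ is not a $\Delta_0(\alpha)$-formula: pulled out of the antecedent, $\forall y$ becomes an unbounded $\exists y$. The Paris--Wilkie theorem therefore does not apply, and $\langle L_X\rangle$ is not even defined. Your third step silently bounds $y$ (``in the relevant range''). But then your first step no longer justifies the implication you feed to Paris--Wilkie. In a model of $I\Delta_0(\alpha)$ satisfying only $\forall y\le b\,(R_X(y)\to y<\ell_X)$, the relation $R_X$ can be unbounded, so it is not a set of the $V^0$-expansion.

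The repair is easy, but it has to be made. There are two options.

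\emph{Keep a bounded hypothesis.} Take a term $b(\tuple x,\tuple\ell)$ majorizing every argument of $R_X$ in $A^*$; this exists because all quantifiers are bounded and $L_{PA}$-terms are monotone. In the expansion, interpret $X$ by the definable set $R_X\cap[0,\ell_X)$. Under the bounded hypotheses this set has length exactly $\ell_X$ and agrees with $R_X$ on $[0,b]$, so $A^*$ is unaffected.

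\emph{Drop the hypothesis altogether (cleaner).} Form $A^*$ by replacing each atom $X(t)$ with $(R_X(t)\wedge t+1<\ell_X)\vee t+1=\ell_X$. This defines a bounded set of length exactly $\ell_X$ in every model. The expansion argument then gives $I\Delta_0(\alpha)\vdash\forall\tuple x\,\forall\tuple\ell\,A^*$ with no hypothesis, and Paris--Wilkie applies directly. At $\ell_X=m_X$ the translation is, up to trivial simplification of constants, the usual one: $r_v$ for $v<m_X-1$, $\top$ for $v=m_X-1$, $\bot$ for $v\ge m_X$. The discharging step then disappears.

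If you keep your version, say explicitly that discharging means substituting those constants for the variables $r_y$ throughout the entire Frege proof, not just in the hypothesis. Frege proofs are closed under substitution, and substituting constants does not increase depth.
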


\subsubsection{Coding of CNFs}\label{subsub:coding_cnfs}
Assume $\varphi$ is a CNF formula with $n$ clauses $C_1, C_2, \dots, C_{n}$ and $m$ propositional variables $x_1, x_2, \dots, x_{m}$. We encode it as a ternary relation $R_\varphi \subseteq \{1, 2, \dots, n\} \times \{1, 2, \dots, m\} \times \{-1, 0, 1\}$ defined as follows:
\begin{align*}
(i,j,1) \in R_\varphi &\;\;\Longleftrightarrow\;\; x_j \in C_i, \\
(i,j,-1) \in R_\varphi &\;\;\Longleftrightarrow\;\; \neg x_j \in C_i, \\
(i,j,0) \in R_\varphi &\;\;\Longleftrightarrow\;\; x_j \notin C_i \ \wedge\ \neg x_j \notin C_i .
\end{align*}

We will abuse the notation and use $\varphi$ to denote both the formula $\varphi$ and the corresponding relational encoding $R_\varphi$.

We say that a ternary relation $R \subseteq \mathbb{N}^3$
\emph{encodes a CNF formula with $n$ clauses and $m$ propositional variables}
if the predicate $\mathrm{CNF}(R,n,m)$ holds, where
\begin{align*}
\mathrm{CNF}(n,m,R) \;:=\; &
\Bigl(
R \subseteq \{1, 2, \dots, n\} \times \{1, 2, \dots, m\}\times\{-1,0,1\} \\ &
\;\wedge\;
R \text{ is a graph of a function }  \\&\{1, 2, \dots, n\} \times \{1, 2, \dots, m\} \rightarrow \{-1, 0, 1\}
\Bigr).
\end{align*}

An assignment $\tuple a = (a_1, a_2, \dots, a_m) \in \{0,1\}^m$ \emph{satisfies} $\varphi$, iff $\mathrm{Sat}(n,m, \tuple a, \varphi)$ holds, where 
\begin{align*}
\mathrm{Sat}(n,m, \tuple a, \varphi) \; := &\;   \forall i \in \{1, 2, \dots, n\} \;
\exists j \in \{1, 2, \dots, m\}\;
\exists s \in \{-1,1\}\; \\&
\bigl(R_\varphi(i,j,s) \wedge
\big[(s=1 \wedge a_j=1)\;\vee\;(s=-1 \wedge a_j=0)\bigr]\bigr).
\end{align*}

\subsubsection{Reflection principle}\label{reflection_principle}

In the following subsection we recall the reflection principle for an arbitrary
propositional proof system~$Q$. Informally, the reflection principle expresses the
soundness of~$Q$: every formula that has a $Q$-refutation is unsatisfiable.

The main result of this subsection is Theorem~\ref{thm:reflection_theorem}. It states
that if $V^0$ proves the reflection principle restricted to refutations of CNF formulas for~$Q$, then
there exists a constant~$d$ such that the bounded-depth Frege system~$F_d$
p-simulates~$Q$ on this class of proofs. The proof of this theorem relies on the
Paris--Wilkie theorem for $V^0$ (Theorem~\ref{thm:paris_wilkie_v0}). In the subsequent sections, we will apply this result to the
proof system~$P_{CA}$ to be introduced in Section~\ref{sec:lower_bound}.

Let $\varphi$ be a CNF formula with propositional variables $p_1, p_2, \dots, p_x$ and clauses $C_1, C_2, \dots, C_y$, $w$ be its $Q$-refutation of size $\leq z$ and we assume $x,y \le z$.

The provability relation 
$Q$ is polynomial-time decidable by Definition~\ref{def:proof_system} and hence, in particular, is in 
the class $\mathrm{NP}$. By Fagin’s theorem, there exist relation symbols $G_1,\ldots,G_t$ (of some arities) on $[z+1]$ 
and a $\Delta_0(w,\varphi,G_1,\ldots,G_t)$-formula
\[
\mathrm{Refut}^0_Q(x,y,z,w,\varphi,G_1,\ldots,G_t)
\]
such that, for
\[
\mathrm{Refut}_Q(x,y,z,w,\varphi) := \exists G_1 \ldots G_t \, 
  \mathrm{Refut}^0_Q(x,y,z,w,\varphi,G_1,\ldots,G_t)
\]
and, for all $x := n$, $y := m$, $z := s$, every CNF formula $\varphi$ with $\le n$ propositional variables 
and size $\le m$ and every string $w$ it holds that
\[
w \text{ is a $Q$-refutation of $\varphi$ of size } \le s 
\quad\text{if and only if}\quad 
\mathrm{Refut}_Q(n,m,s,w,\varphi).
\]

Moreover, in Subsection~\ref{subsub:coding_cnfs} we already introduced predicates:

\begin{itemize}
  \item a $\Delta_0(\varphi)$-formula $\mathrm{CNF}(x,y, \varphi)$ formalizing that $\varphi$ encodes a
  CNF formula with $\le x$ propositional variables and of size $\le y$;

  \item a $\Delta_0(\tuple a,\varphi)$-formula $\mathrm{Sat}(x,y, \tuple a, \varphi)$ formalizing that $\tuple a$ is a truth
  assignment to the $\le x$ propositional variables that satisfies the formula $\varphi
  $;
\end{itemize}

\begin{theorem}[From Reflection to Simulation {\cite[Section~8.4]{krajicek2019}}]\label{thm:reflection_theorem}
    Let $Q$ be an arbitrary proof system and let $\mathrm{Refut}_Q$ be an arbitrary
$NP$-definition of its refutation predicate for CNF formulas of the form as above.

If $V^0$ proves 
\[
\mathrm{CNF}(x,y, \varphi) \wedge \mathrm{Refut}_Q^0(x,y,z,w,\varphi,G_1,\ldots,G_t) \;\rightarrow\; \neg\mathrm{Sat}(x,y,\tuple a,\varphi),
\]
then there exists $d$ such that $F_d$ p-simulates $Q$ with respect to refutations of CNF formulas.
\end{theorem}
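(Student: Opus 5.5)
The plan is to prove Theorem~\ref{thm:reflection_theorem} by applying the Paris--Wilkie theorem for $V^0$ (Theorem~\ref{thm:paris_wilkie_v0}) to the assumed implication. We then use the resulting short bounded-depth proofs of the translated reflection formula as a uniform template. Instantiating the template with a concrete formula and refutation yields an $F_d$-refutation.

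\textbf{Step 1: translate the reflection formula.} Fix parameters $n,m,s$. The formula
\[
\neg\mathrm{CNF}(x,y,\varphi) \vee \neg\mathrm{Refut}^0_Q(x,y,z,w,\varphi,\tuple G) \vee \neg\mathrm{Sat}(x,y,\tuple a,\varphi)
\]
is $\Sigma^B_0$, and its universal closure is provable in $V^0$ by hypothesis. Theorem~\ref{thm:paris_wilkie_v0} therefore gives constants $c,d_0$ and, for every $n,m,s$, an $F$-proof $\pi_{n,m,s}$ of the Paris--Wilkie translation. Each such proof has depth at most $d_0$ and size at most $(n+m+s+2)^c$. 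The translation is a formula in atoms of three kinds: atoms $q$ for the bits of $\varphi$, atoms $r$ for the bits of $w$ and of the witnesses $G_1,\dots,G_t$, and atoms $p_1,\dots,p_n$ for the assignment $\tuple a$.

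\textbf{Step 2: substitute a concrete refutation.} Let $w$ be a $Q$-refutation of a CNF $\varphi$ of size $s$. Substitute the actual bits of $\varphi$ for the $q$-atoms. Substitute the actual bits of $w$, together with witnesses $G_i$ that make $\mathrm{Refut}^0_Q$ true, for the $r$-atoms. These witnesses exist by the Fagin characterization, and they can be computed in polynomial time from $w$ because $Q$ is p-time decidable; in the Fagin encoding they are essentially the computation table of the verifier. The $p$-atoms are left as variables.

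Substitution preserves Frege proofs, so we obtain a proof of the substituted formula of the same size and depth. Now $\langle\mathrm{CNF}\rangle$ and $\langle\mathrm{Refut}^0_Q\rangle$ become closed formulas that evaluate to true. Each has polynomially many constant connectives, and a polynomial-size, bounded-depth derivation simplifies them to $\top$. What remains is a proof of $\langle\neg\mathrm{Sat}\rangle$ under the substitution, which is (after simplification) $\neg\bigwedge_i\bigvee_{\ell\in C_i}\ell$ in the variables $p_j$.

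\textbf{Step 3: derive $\bot$ from the clauses of $\varphi$.} The clauses of $\varphi$ themselves yield their conjunction, the translation of $\mathrm{Sat}$, in polynomially many depth-bounded steps. One step with the translation of $\neg\mathrm{Sat}$ then gives $\bot$. The total refutation has size polynomial in $s$ and depth $d = d_0 + O(1)$, and the whole construction is p-time computable from $(w,\varphi)$, which is the required p-simulation.

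The main obstacle is the bookkeeping in Step 2. The translation of $\mathrm{Sat}$ with the $q$-atoms fixed must be shown to match the actual clauses of $\varphi$ up to a cheap bounded-depth transformation. This includes the constant simplifications, evaluating $\bigvee_{j,s}$ against the fixed bits of $R_\varphi$, and the degenerate disjuncts that become $0$. I would handle it with a general lemma: any formula of depth $d$ containing constants can be simplified by an $F_{d+1}$ proof of size polynomial in the formula. That lemma would be proved by induction on the formula.
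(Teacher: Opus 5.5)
Your proposal is correct and follows the same route as the paper's sketch. You apply Paris--Wilkie for $V^0$ to the reflection formula, substitute the bits of $\varphi$, $w$ and the witnesses $G_i$, give a short proof of the now-true closed part, use modus ponens to get $\langle\neg\mathrm{Sat}\rangle$, and finally derive $\bot$ from the clauses of $\varphi$; the paper ends with $\neg\varphi(\tuple p)$ instead, which is equivalent.

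One caveat, which the paper shares: the witnesses $G_i$ are p-time computable because you take the canonical Fagin encoding of the deterministic verifier (its computation table). It does not follow merely from $Q$ being p-time decidable, and for a truly arbitrary NP-definition the witnesses need not be efficiently computable.
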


This theorem is the key to our proof of the lower bound in Theorem~\ref{thm:main_lower_bound} and thus,
for the benefit of the reader, we shall explain why it holds.

Assume the hypothesis of the theorem. By Theorem~\ref{thm:paris_wilkie_v0},
the propositional translations of
\[
\CNF \wedge \mathrm{Refut}_Q^0 \;\rightarrow\; \neg \mathrm{Sat}
\]
have polynomial-size proofs in $F_d$, for some fixed constant $d$.

Now fix a concrete CNF formula $\varphi$.
Suppose $\pi$ is a $Q$-refutation of $\varphi$,
with witnesses $G_1,G_2,\dots,G_t$ showing that $\pi$ is correct.
From this data we obtain an $F_d$-refutation of the propositional translation
with parameters determined by $\varphi$, $\pi$, and the relations $G_i$.

In this refutation we substitute concrete bits:
for propositional variables describing $\varphi$, $\pi$, and $G_i$,
we plug in the bits coding these objects.
After this substitution, the propositional translation of
\[
\CNF \wedge \mathrm{Refut}_Q^0
\]
becomes a true propositional sentence (it has no variables),
since $\pi$ is indeed a correct $Q$-refutation of $\varphi$.

Any true propositional sentence has a short proof,
because we can simply evaluate it.
Using modus ponens, we then obtain a proof of the formula
\[
\langle\neg \mathrm{Sat}(x,y,\tuple a,\varphi)\rangle_{n,m}.
\]
This formula contains only the propositional variables corresponding to the unknown bits
of the assignment $\tuple a$
(the propositional variables describing $\varphi$ were already replaced by constants).

From this we derive
\[
\neg \varphi(\tuple p).
\]

The whole construction can be carried out by a polynomial-time algorithm.
Therefore we obtain a p-simulation.

The full technical details are in~\cite[Section~8.4]{krajicek2019}.

\section{Computation table for a formula}

In Section~\ref{sec:from_cnf_to_ca} we aim to prove the following theorem:

\begin{theorem}
    There exists a polynomial algorithm that computes from a CNF formula $\varphi$ a cellular automaton $A_\varphi$ and $n, m \geq 1$ such that  $\varphi$ is satisfiable if and only if $A_\varphi$ is not injective on configurations inside $[n+1] \times [m+2]$. 
\end{theorem}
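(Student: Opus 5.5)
The plan is to build $A_\varphi$ so that a non-quiescent configuration filling the rectangle encodes a \emph{computation table} certifying that an assignment satisfies $\varphi$, and so that the local rule is the identity on all ``data'' and acts only on one extra bit per cell. On those bits it should act as a linear map over $\mathbb{F}_2$ that is singular exactly when the whole table is correct. Let $\varphi$ have clauses $C_1,\dots,C_n$ and variables $x_1,\dots,x_m$.

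\textbf{The table and the states.} Cell $(i,j)$ of $[n+1]\times[m+2]$ is meant to hold:
\begin{itemize}
\item its own coordinates $(i,j)$;
\item the bit $a_j$ of the assignment, which must be equal in vertically adjacent cells;
\item a bit $c_{ij}$ meaning ``clause $C_i$ is satisfied by $x_1,\dots,x_j$'', with $c_{ij}=c_{i,j-1}\vee[\text{$x_j$ or $\neg x_j$ occurs in $C_i$ and is made true by $a_j$}]$;
\item a bit $e_{ij}$ meaning ``clauses $C_1,\dots,C_i$ are all satisfied'', accumulated down the last column;
\item a flag bit $\flag$.
\end{itemize}
Row $0$ and the extra columns serve as boundary and initialisation rows/columns; this is why the rectangle is $[n+1]\times[m+2]$. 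The occurrence data of $C_i$ are hardwired into $f$ through the coordinate component, so the state set is the quiescent state plus $O(nm)$ non-quiescent states. Since the von Neumann neighbourhood has $v=5$, Lemma~\ref{lm:CA_bound} shows that the table of $f$ has polynomial size and is computable in polynomial time.

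\textbf{Local validity.} Call a cell \emph{locally valid} if three things hold:
\begin{itemize}
\item its von Neumann neighbours carry the expected coordinates, or are quiescent exactly at the rectangle border;
\item the recurrences for $a$, $c$ and $e$ hold between it and its neighbours;
\item at the final accepting cell, $e_{n}=1$.
\end{itemize}
Checking this needs only the 5-neighbourhood. Because coordinates must agree between neighbours and the border must be quiescent, all cells of a configuration inside the rectangle are locally valid iff the configuration is a correct table for a satisfying assignment. Such a table then exists iff $\varphi$ is satisfiable.

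\textbf{The rule on flags.} Fix a Hamiltonian cycle $\sigma$ on the grid $[n+1]\times[m+2]$ that moves only between von Neumann neighbours. It exists because one may arrange that one side length is even, for instance by padding $\varphi$ with a dummy variable. The successor $\sigma(i,j)$ is determined by the stored coordinates. Define $f$ to keep all data components, and to set the new flag of cell $x$ to $\flag_x\oplus\flag_{\sigma(x)}$ if $x$ is locally valid, and to $\flag_x$ otherwise. Quiescent cells stay quiescent.

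The two directions then go as follows.
\begin{itemize}
\item If $\varphi$ is satisfiable, take the correct table with all flags $0$ and the one with all flags $1$. Every cell is valid, so both map to the same configuration, and $A_\varphi$ is not injective.
\item Conversely, suppose $F(C)=F(C')$. The data components are copied, so $C$ and $C'$ agree on them; in particular they have the same quiescent set and the same set of valid cells, and they differ only in flags. The flag update is then a fixed $\mathbb{F}_2$-linear map on the flag vector. If some non-quiescent cell is invalid, or the non-quiescent region is not a full table, the relations $y_x=y_{\sigma(x)}$ along the valid cells form paths, not the full cycle. Every path ends at an invalid cell, where $y=0$ is forced, so the kernel is trivial and $C=C'$. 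Hence non-injectivity forces a fully valid table, and $\varphi$ is satisfiable.
\end{itemize}

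\textbf{Main obstacle.} I expect the hardest part to be the design of the local validity predicate. It must be strong enough that ``every cell valid'' forces a genuinely correct, correctly placed table. In particular, a partially filled rectangle, misplaced coordinates, or lying $c,e$ bits must always leave some invalid cell. It must also remain checkable from the von Neumann neighbourhood alone. I would handle this first by writing out the border cases (row $0$, columns $0$ and $m+1$, the accepting corner) explicitly, and then verifying by a short induction over rows and columns that local validity everywhere implies the global semantics.
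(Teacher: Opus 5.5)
Your proposal is correct and follows essentially the same construction and argument as the paper: coordinates plus partial-clause and partial-conjunction bits checked over the von Neumann neighbourhood, a snake-shaped Hamiltonian successor with parity fixed by padding, XOR with the successor's extra bit on valid (blue) cells, the all-$0$/all-$1$ pair when $\varphi$ is satisfiable, and otherwise injectivity along chains ending at invalid cells, which your $\mathbb{F}_2$-kernel phrasing restates from Lemma~\ref{lm:global_injective} and Theorem~\ref{thm:red_cell}. The one step you assert without proof is that valid cells cannot form a successor cycle unless the whole rectangle is valid. In your setup this is immediate, because stored coordinates along such a cycle run through the full $\sigma$-orbit of length $(n+1)(m+2)$, and this is a cleaner substitute for the paper's Lemma~\ref{lm:at_least_one_red}.
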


In this section, we introduce computation tables associated with a given CNF formula $\varphi$ and an assignment $\tuple a$.
In the next section, these computation tables will be used to formally define the set of states
and the bounded configurations of a cellular automaton.
The automaton $A_\varphi$ will then verify whether a given configuration encodes a
correct computation of the value of $\varphi$ on a given tuple $\tuple a$.

The crucial property of this construction is the following:
the automaton $A_\varphi$ is injective on finite configurations of particular size if and only if the encoded computation
contains an error or the formula $\varphi$ is not satisfiable.

For the remainder of the paper, we fix $n,m \in \mathbb{N}$, a CNF formula
\[
\varphi = C_1 \wedge \cdots \wedge C_n
\]
over $x_1,\dots,x_m$, and an assignment
\[
\tuple a = (a_1,\dots,a_m) \in \{0,1\}^m.
\]

For convenience, we denote by $C_{i,j}$ the restriction of the clause $C_i$ to the variable $x_j$:

\[
C_{i,j} \;=\;
\begin{cases}
    x_j & \text{if } x_j \in C_i, \\[6pt]
    \neg x_j & \text{if } \neg x_j \in C_i, \\[6pt]
    \bot & \text{otherwise.}
\end{cases}
\]

With a pair \((\varphi, \tuple{a})\) we associate a unique table of size
$(n+1) \times (m+2)$, which we call the \emph{computation table}.
Its cells are indexed by pairs $(i,j)$, where
$i \in [n+1]$ and $j \in [m+2]$.


\bigskip

\subsection{Types of cells}\label{types_of_cells}

The computation table consists of six types of cells.
Every cell stores its coordinates $(i,j)$ and a label from $\{0,1\}$, chosen
arbitrarily. Depending on its position, a cell may also carry additional
information, as described below.

\begin{itemize}    
    \item The \textbf{top-left cell} at $(0,0)$, the \textbf{$0$th column} cells at $(i,0)$ \linebreak for $i \in \{1,2,\dots, n\}$, and the \textbf{top-right cell} at $(0, m+1)$ carry no information additional to coordinates and label.
    
    \item \textbf{$0$th row.} Cells at positions $(0,j)$ for $j \in \{1,2,\dots,m\}$ store the value
    $a_j \in \{0,1\}$ of the variable $x_j$ under the assignment $\tuple a$.
    
    \item  \textbf{Last column.} Cells at positions $(i, m+1)$
    for $i \in \{1,2, \dots ,n\}$, store the value of the partial conjunction
$C_1 \wedge C_2 \wedge \dots \wedge C_i$ evaluated under~$\tuple a$, i.e. elements of $\{0,1\}$. 

Notice that the cell in position $(n, m+1)$ contains the value of $\varphi(\tuple a)$. 
    \item \textbf{Main body.} For each $i \in \{1,2,\dots,n\}$ and $j \in  \{1,2,\dots,m\}$ the cell in position $(i,j)$ contains:
\begin{itemize}
         \item the value $a_j \in \{0,1\}$ of the variable $x_j$ under $\tuple a$;
  \item a \emph{variable flag}
  \[
  \mathrm{flag}(i,j) \;=\;
  \begin{cases}
    1 & \text{if } x_j \in C_i, \\[6pt]
    -1 & \text{if } \neg x_j \in C_i, \\[6pt]
    0 & \text{otherwise; }
  \end{cases}
  \]
  \item the value of the partial disjunction
  \[
  \bigl(\bigvee_{u \leq j} C_{i,u}\bigr) (\tuple a),
  \]
    that is, the truth value of the subformula of \(C_i\) restricted to the literals over variables \(x_1, \dots, x_j\). 


\end{itemize}

\end{itemize}

\begin{table}[h]
\centering
\resizebox{\columnwidth}{!}{%
\begin{tabular}{|c|c|c|c|c|}
 \hline
$((0,0), *)$
& $((0,1), *, a_1=0)$
& $((0,2), *, a_2=1)$
& $((0,3), *, a_3=1)$
& $((0,4), *)$ \\
 \hline
$((1,0), *)$
& $((1,1), *, a_1=0,\; \mathrm{flag}_{1,1}=1,\; \vee_{1,1}=0)$
& $((1,2), *, a_2=1,\; \mathrm{flag}_{1,2}=0,\; \vee_{1,2}=0)$
& $((1,3), *, a_3=1,\; \mathrm{flag}_{1,3}=0,\; \vee_{1,3}=0)$
& $((1,4), *, \wedge_1=0)$ \\
 \hline
$((2,0), *)$
& $((2,1), *, a_1=0,\; \mathrm{flag}_{2,1}=0,\; \vee_{2,1}=0)$
& $((2,2), *, a_2=1,\; \mathrm{flag}_{2,2}=1,\; \vee_{2,2}=1)$
& $((2,3), *, a_3=1,\; \mathrm{flag}_{2,3}=-1,\; \vee_{2,3}=1)$
& $((2,4), *, \wedge_2=1)$ \\
 \hline
$((3,0), *)$
& $((3,1), *, a_1=0,\; \mathrm{flag}_{3,1}=-1,\; \vee_{3,1}=1)$
& $((3,2), *, a_2=1,\; \mathrm{flag}_{3,2}=0,\; \vee_{3,2}=1)$
& $((3,3), *, a_3=1,\; \mathrm{flag}_{3,3}=-1,\; \vee_{3,3}=1)$
& $((3,4), *, \wedge_3=1)$ \\
 \hline
$((4,0), *)$
& $((4,1), *, a_1=0,\; \mathrm{flag}_{4,1}=1,\; \vee_{4,1}=0)$
& $((4,2), *, a_2=1,\; \mathrm{flag}_{4,2}=1,\; \vee_{4,2}=1)$
& $((4,3), *, a_3=1,\; \mathrm{flag}_{4,3}=0,\; \vee_{4,3}=1)$
& $((4,4), *, \wedge_4=1)$ \\
 \hline
\end{tabular}}
\caption{Computation table for $\varphi = C_1 \wedge C_2 \wedge C_3 \wedge C_4$, where $C_1 = x_1$, $C_2 = x_2 \vee \neg x_3$, $C_3 = \neg x_1 \vee \neg x_3$, and $C_4 = x_1 \vee x_2$. The assignment is $\tuple a = (0,1,1)$. We use the abbreviations $\mathrm{flag}_{i,j}$ for $\mathrm{flag}(i,j)$, $\vee_{i,j}$ for $\bigl(\bigvee_{u \le j} C_{i,u}\bigr)(\tuple a)$, and $\wedge_i$ for $\bigl(\bigwedge_{v \le i} C_v\bigr)(\tuple a)$. The symbol $*$ denotes an arbitrary label in $\{0,1\}$.}
\label{tab:sample_formula_table}
\end{table}

\section{From CNF to cellular automaton}\label{sec:from_cnf_to_ca}

We now turn the computation tables introduced in the previous section into objects
that can be checked locally by a cellular automaton.

We define a cellular automaton $A_{\varphi}$ that will operate inside~$[n+1]\times[m+2]$ whose purpose is to verify whether a given configuration represents a correct computation of the truth value of $\varphi$ for some assignment $\tuple a$. These configurations are intended to encode computation tables
for $\varphi$ as defined in the previous section. However, the cells will contain not only the information used for the verification of the correctness but also an additional bit called \emph{label}.

The idea is that $A_{\varphi}$ performs a local verification of the correctness. It inspects the neighbourhood of each cell and checks  whether the local configuration is consistent with a correct computation. If yes, the automaton updates the label component of the cell's state according to the labels of its neighbors. The consistency requirements are formalized by a collection of \emph{local correctness rules}, which will be specified later in this section.  On each cell $A_{\varphi}$  will possibly modify only the label component of the cell state, leaving all other symbols unchanged.

In order to define the cellular automaton $A_\varphi$, we first formalize the notions
introduced in the previous section and fix some of its parameters.
The remaining components will be specified later.

In particular,  we fix:
\begin{itemize}
    \item the dimension, which is $d=2$;
    \item the neighbourhood, which is the \emph{von Neumann neighbourhood}
    \[
        N = \{(0,0), (0,1), (0,-1), (1,0), (-1,0)\}.
    \]
\end{itemize}

For a cell with coordinates $(i,j)$ we call $(i,j+1)$ the right neighbour, $(i,j-1)$ the left neighbour, $(i+1,j)$ the below neighbour, and $(i-1,j)$ the above neighbour.

The local transition function $f$ and the set of states $S_{n,m}$ will be defined in the following subsections.

\subsection{Set of states}\label{states}

To define the \emph{set of states}, we generalize the six types of cells described in
Subsection~\ref{types_of_cells}. Each cell state is a tuple
\[
s = \bigl((i,j),\, \mathrm{flag},\, a,\, \mathsf{pd},\, \mathsf{pc},\, \mathrm{label}\bigr),
\]
whose components encode the information stored in a state of a single cell in a configuration. We use the symbol $\square$ to indicate that a component is not present.
In particular, this allows us to define the \emph{quiescent state}
\[
\tuple q = ((\square, \square), \square, \square, \square, \square, \square).
\]
We describe the meaning of the individual components of a state $s$ as follows:
\begin{itemize}
    \item \textbf{Coordinates.}
    The component $\coord(s)=(i,j)\in ([n+1]\cup \square)\times([m+2]\cup \square)$ gives the position of the cell
    in the computation table.

    \item \textbf{Variable flag.}
    The component $\mathrm{flag}(s)\in\{-1,0,1,\square\}$ is intended to encode whether
    the literal $x_j$, $\neg x_j$, or neither occurs in clause $C_i$.

    \item \textbf{Assignment value.}
    The component $a(s)\in\{0,1,\square\}$ is intended to store the value $a_j$ of the
    variable $x_j$ under the assignment $\tuple a$.

    \item \textbf{Partial disjunction.}
    The component $\mathsf{pd}(s)\in\{0,1,\square\}$ is intended to represent the truth
    value of the partial disjunction
    \[
    \bigl(\bigvee_{u \le j} C_{i,u}\bigr)(\tuple a).
    \]

    \item \textbf{Partial conjunction.}
    The component $\mathsf{pc}(s)\in\{0,1,\square\}$ is intended to represent the truth
    value of the partial conjunction
    \[
    \bigl(\bigwedge_{v \le i} C_v\bigr)(\tuple a).
    \]

    \item \textbf{Label.}
    The component $\mathrm{label}(s)\in\{0,1,\square\}$ is an auxiliary label used by the
    cellular automaton.
\end{itemize}

We emphasize that these components store the \emph{written values} in a configuration.
They may be incorrect. The role of the cellular automaton $A_\varphi$ is to locally verify that the state of
each cell is consistent with the states of its neighbours in the von Neumann
neighbourhood.

Note that we refer to the components of a state $s$ using the notation \linebreak
$\textit{component}(s)$. For example, $\mathrm{flag}(s)$ denotes the value stored in
the component representing the variable flag. Formally, each component is a function on the set of states $S_{n,m}$ (defined below)
with values in the domain of that component. That is, 
\[
\flag: S_{n,m} \to \{-1, 0, 1, \square\},
\]

and similarly for the other components.

We now define seven subsets of the set of states $S_{n,m}$: six corresponding to the internal cell types described in Subsection~\ref{types_of_cells}, and one representing the quiescent state surrounding the finite part of the configuration. These subsets are given as follows:

\begin{enumerate}
    \item \textbf{Top-left cell:} 
    \[
    S_1 = \{ ((0,0),  \square, \square, \square, \square, \mathrm{label}) \mid \mathrm{label} \in \{0, 1\} \}
    \]
    
    \item \textbf{$0$th column:} 
    \[
    S_2 = \{ ((i,0),\square, \square, \square, \square, \mathrm{label}) \mid i \in \{1, 2, \dots, n \}, \mathrm{label} \in \{0, 1\}\}
    \]
    
    \item \textbf{$0$th row:} 
    \[
    S_3 = \{ ((0, j), \square, a, \square, \square, \mathrm{label}) \mid j \in \{1, 2, \dots, m \}, a, \mathrm{label} \in \{0, 1\} \}
    \]

    \item \textbf{Top-right cell:} 
    \[
   S_4 = \{  ((0, m+1), \square, \square, \square, \square, \mathrm{label}) \mid \mathrm{label} \in \{0, 1\} \}
    \]

    \item  \textbf{Last column:} 
    \[
   S_5 = \{  ((i, m+1), \square, \square, \mathsf{pc}, \mathrm{label}) \mid i \in \{1, 2, \dots, n\},\mathsf{pc} \in \{0, 1\}, \mathrm{label} \in \{0, 1\} \}
    \]

    The cell with state $s$ such that $\coord(s)=(n,m+1)$ is called an \emph{output cell}. Notice that this notion refers only to the written coordinates. Since coordinates are part of the written data and may be incorrect,
a configuration may contain several output cells, or none at all.
    
    \item \textbf{Main body:} 
   \[
\begin{aligned}
S_6 = {} & \{ ((i, j), \mathrm{flag}, a, \mathsf{pd}, \square, \mathrm{label}) 
   \mid i \in \{1, \dots, n\},\, j \in \{1, \dots, m\}, \\
   & \quad \mathrm{flag}, a, \mathsf{pd}, \mathrm{label} \in \{0, 1\} \} 
\end{aligned}
\]


    \item \textbf{Quiescent states around the finite part of the configuration:}
    \[
    S_7 = \{\tuple q \} \text{, where } \tuple q = ( (\square, \square), \square, \square, \square, \square, \square)
    \]

\end{enumerate}

The set of states is:
\begin{align}\label{eq:set_of_states_bound}
 S_{n,m} = & S_1 \cup S_2 \cup S_3 \cup S_4 \cup S_5 \cup S_6 \cup S_7  \\ & \subseteq \nonumber([n+1] \cup \{\square\}) \times([m+2]\cup \{\square\})\times\{-1, 0, 1, \square\} \times \{0,1, \square\}^4
\end{align}

Again we distinguish between the values that a cell is \emph{supposed to represent}
and the values that are \emph{actually written} in the computation table.
The written values are part of the cell's state and may be incorrect. The names of subsets $S_1, S_2, S_3, S_4, S_5, S_6$ are derived from the written value.
For example, a cell with a state $s$ such that $\coord(s) = (0,0)$ can be situated in the bottom part of the configuration, but we still call it the top-left cell.

We denote by $\Config_{n,m}$ the set of all finite configurations
inside $[n+1]\times [m+2]$ such that every cell in this rectangle
has a state from $S_{n,m}$ and no cell in the finite part is in the
quiescent state.

From now on, whenever we speak about a (finite) configuration, we always
mean an element of $\Config_{n,m}$.

Since $A_\varphi$ can only inspect the neighbourhood of each cell, it verifies
correctness using a collection of \emph{local correctness rules}.
These rules check, for example, that the values $a_j$ are constant along each
column, that the coordinates $(i,j)$ change in the expected way when moving
vertically or horizontally in the configuration, and that the values of the partial
disjunctions in the main body and the partial conjunctions in the last column are
computed correctly from neighbouring cells. We also impose several "technical conditions" on the cells as not all cells are intended to contain all state components. 
For example, the cell with a state $s$ such that $\coord(s) = (0,0)$ is required to contain only its coordinates, while all
other state components in this cell are set to $\square$.

\begin{definition}[Local correctness]\label{def:locally_correct}
Assume that all states mentioned below belong to $S_{n,m}$, and let 
$C \in \Config_{n,m}$ be a configuration. 

Let $c$ be a non-quiescent cell with state $s$ such that $\coord(s) = (i,j)$, where $i \in [n+1]$ and $j \in [m+2]$.  

We say that $c$ is \emph{locally correct} if the
following conditions hold. These conditions are
called the \emph{local correctness rules}. 

\begin{enumerate}[label=(\Alph*)]
\item \textbf{Index consistency.}

Let $\coord(s)=(i,j)$ and let $s_1,s_2,s_3,s_4$ be the states of the
right, left, below, and above neighbours of $c$, respectively.

If $i \in \{1,\dots,n-1\}$ and $j \in \{1,\dots,m\}$ (that is, $c$ is
not on the boundary), then
\[
\coord(s)=(i,j)
\]
if and only if
\begin{align*}
\coord(s_1) &= (i,j+1) \;\wedge\;
\coord(s_2) = (i,j-1) \;\wedge \\
\coord(s_3) &= (i+1,j) \;\wedge\;
\coord(s_4) = (i-1,j).
\end{align*}

If $(i,j)$ lies on the boundary (that is, $i=0$ or $i=n$ or
$j=0$ or $j=m+1$), then the same equivalence holds for the neighbours
inside the table, while every neighbour outside the finite part is in
the quiescent state $\tuple q$.

For example,
\[
\coord(s)=(0,0)
\]
if and only if
\begin{align*}
\coord(s_1) &= (0,1) \;\wedge\;
s_2 = \tuple q \;\wedge \\
\coord(s_3) &= (1,0) \;\wedge\;
s_4 = \tuple q.
\end{align*}

\item \textbf{Formula consistency.}
In the main body, the $\flag$ component must correctly encode the
occurrence of variables in the formula $\varphi$.

More precisely, if 
$i \in \{1,2, \dots,n\}$ and $j \in \{1,2,\dots,m\}$, then:
\[
\begin{aligned}
\flag(s) = 1 
&\;\leftrightarrow\; x_j \in C_i, \\
\flag(s) = -1 
&\;\leftrightarrow\; \neg x_j \in C_i, \\
\flag(s) = 0 
&\;\leftrightarrow\; \bigl(x_j \notin C_i \,\wedge\, \neg x_j \notin C_i\bigr).
\end{aligned}
\]

\item \textbf{Vertical consistency.}
\begin{enumerate}[label=(C\arabic*)]
 \item If the column $j \in \{1,2,\dots,m\}$, then the $a$-value is constant along
the column. That is, for every neighbour $c'$ above or below $c$
(with state $s'$),
\[
a(s') = a(s).
\]
\item If $(i,j)=(r,m+1)$ with $r\ge 2$, let $c'$ be the left neighbour
with state $s'$ and $c''$ the upper neighbour with state $s''$.
Then the partial conjunction satisfies
\[
\pc(s)
=
\pc(s'') \wedge \pd(s').
\]
Equivalently:
\begin{itemize}
    \item if $\pc(s'')=0$, then $\pc(s)=0$;
    \item if $\pc(s'')=1$, then $\pc(s)=1$ if and only if $\pd(s')=1$.
\end{itemize}

  \item For $i=1$, the base case is checked directly. This means that if $(i,j)=(1,m+1)$, let $c'$ be the left neighbour with state
$s'$. Then
\[
\pc(s)=1 \;\leftrightarrow\; \pd(s')=1.
\]
\end{enumerate}
\item \textbf{Horizontal consistency.}
\begin{enumerate}[label=(D\arabic*)]
  \item If $1 \leq i \leq n$ and $2 \leq j \leq m$. Let $c'$ be the
left neighbour with state $s'$. Then
\[
\pd(s)
=
\bigl(\pd(s')
\;\vee\;
(\flag(s)=1 \wedge a(s)=1)
\;\vee\;
(\flag(s)=-1 \wedge a(s)=0)\bigr).
\]

Equivalently:
\begin{itemize}
    \item if $\pd(s')=1$, then $\pd(s)=1$;
    \item if $\pd(s')=0$, then $\pd(s)=1$ if and only if 
    $\flag(s)=1 \wedge a(s)=1$ or 
    $\flag(s)=-1 \wedge a(s)=0$.
\end{itemize}

  \item If $(i,j)=(r,1)$ with $r\in\{1,\dots,n\}$, then
\[
\pd(s)=1
\;\leftrightarrow\;
\bigl(\flag(s)=1 \wedge a(s)=1\bigr)
\;\vee\;
\bigl(\flag(s)=-1 \wedge a(s)=0\bigr).
\]
\end{enumerate}

\item \textbf{Technical conditions.}
Depending on the position $(i,j)$, the unused components of $s$
must be $\square$:
\begin{enumerate}[label=(E\arabic*)]
\item If $j=0$, or $(i,j)=(0,0)$, or $(i,j)=(0,m+1)$,
all components except coordinates and label are $\square$.
\item If $i=0$ and $1\le j\le m$, all components except
coordinates, $a$, and label are $\square$.
\item If $j=m+1$ and $i\ge 1$, all components except
coordinates, $\pc$, and label are $\square$.
\item If $1\le i\le n$ and $1\le j\le m$, then $\pc(s)=\square$.
\end{enumerate}

\end{enumerate}

Each cell class (top-left, $0$th row, etc.) enforces the relevant subset of
(A)--(E).
\end{definition}

In the following Lemma~\ref{lm:new_lemma} we show that if all cells in a configuration $C \in \Config_{n,m}$
satisfy the local correctness rules, then the configuration encodes a correct
computation of the value of $\varphi$ on $\tuple a$. 

In particular, by correct computation we mean that for each 
$(i,j) \in [n+1] \times [m+2]$ and each cell $c=(i,j,s)\in C$ it holds that
\begin{align}\label{eq: definition_table}
\coord(s) &= (i,j), \nonumber\\[6pt] 
\flag(s) &=
\begin{cases} \nonumber
k
& \text{if } (i,j)\in \{1,\dots,n\}\times\{1,\dots,m\} \\
& \quad \text{and } (i,j,k)\in \varphi,\; k\in\{-1,0,1\}, \\[6pt]
\square
& \text{otherwise,}
\end{cases} \\[10pt]
a(s) &=
\begin{cases} \nonumber
a_j
& \text{if } (i,j)\in [n+1]\times\{1,\dots,m\}, \\[6pt]
\square
& \text{otherwise,}
\end{cases} \\[10pt]
\pd(s) &=
\begin{cases} \nonumber
\bigl(\bigvee_{u\le j} C_{i,u}\bigr)(\tuple a)
& \text{if } (i,j)\in \{1,\dots,n\}\times\{1,\dots,m\}, \\[6pt]
\square
& \text{otherwise,}
\end{cases} \\[10pt]
\pc(s) &=
\begin{cases}
\bigl(\bigwedge_{v\le i} C_v\bigr)(\tuple a)
& \text{if } i\in \{1,\dots,n\},\; j=m+1, \\[6pt]
\square
& \text{otherwise.}
\end{cases}
\end{align}

Such configuration, however, is not unique: the labels of the cells are still arbitrary,
and therefore there are exponentially many configurations that contain the same correct
computation of $\varphi$ on $\tuple a$. We fix one canonical representative by requiring that every cell has label $0$ 
and denote this configuration by $\Table_\varphi(\tuple a)$.

For configurations $C_1, C_2 \in \Config_{n,m}$, we write $C_1 \sim C_2$ if they agree on all components of their states except, possibly, the label component. In this case, we say
that $C_1$ and $C_2$ are \emph{similar}. In particular, a configuration similar to $\Table_\varphi(\tuple a)$ encodes the correct computation, too, but may have different labels than just zeros. Clearly $\sim$ is an equivalence relation on the set $\Config_{n,m}$.

\begin{lemma}\label{lm:table_loc_correct}
    Let $C\in \Config_{n,m}$ such that $C \sim \Table_\varphi(\tuple a)$. Then $C$ is locally correct. In particular, $\Table_\varphi(\tuple a)$ is locally correct.
\end{lemma}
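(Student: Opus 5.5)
The plan is a direct verification. The local correctness rules (A)--(E) never mention the label component, so local correctness of a cell in $C$ depends only on the non-label components of that cell and of its von Neumann neighbours. Since $C \sim \Table_\varphi(\tuple a)$, these components coincide with those of $\Table_\varphi(\tuple a)$. It therefore suffices to check that $\Table_\varphi(\tuple a)$ is locally correct. The ``in particular'' clause is then the case $C=\Table_\varphi(\tuple a)$, using reflexivity of $\sim$.

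To check $\Table_\varphi(\tuple a)$, I would fix a cell at position $(i,j)\in[n+1]\times[m+2]$ with state $s$, read off its components from the defining equations (\ref{eq: definition_table}), and go through the rules in turn.

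\textbf{(A)} Every cell stores its true coordinates. Hence the neighbour at $(i,j\pm1)$ or $(i\pm1,j)$ stores exactly those coordinates when it lies inside the rectangle. Any neighbour outside the rectangle is $\tuple q$, because the configuration is inside $[n+1]\times[m+2]$. Both sides of the equivalence are therefore true.

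\textbf{(B)} This holds because $\flag(s)=k$ exactly when $(i,j,k)\in\varphi$, and by the coding of Subsection~\ref{subsub:coding_cnfs} this is precisely the occurrence of $x_j$, $\neg x_j$, or neither in $C_i$.

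\textbf{(C1)} Holds because $a(s)=a_j$ depends only on $j$.

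\textbf{(D2) and (D1)} These follow from the definition of $C_{i,u}$. Note that $C_{i,j}(\tuple a)=1$ if and only if $(\flag(s)=1\wedge a_j=1)\vee(\flag(s)=-1\wedge a_j=0)$. Then $\bigl(\bigvee_{u\le j}C_{i,u}\bigr)(\tuple a)$ equals $\bigl(\bigvee_{u\le j-1}C_{i,u}\bigr)(\tuple a)\vee C_{i,j}(\tuple a)$ for $j\ge2$, and equals $C_{i,1}(\tuple a)$ for $j=1$.

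\textbf{(C3) and (C2)} These follow in the same way, using the following fact: the left neighbour of $(r,m+1)$ is $(r,m)$, whose $\pd$ equals $\bigl(\bigvee_{u\le m}C_{r,u}\bigr)(\tuple a)=C_r(\tuple a)$. Consequently $\bigl(\bigwedge_{v\le r}C_v\bigr)(\tuple a)=\bigl(\bigwedge_{v\le r-1}C_v\bigr)(\tuple a)\wedge C_r(\tuple a)$.

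\textbf{(E)} The technical conditions are immediate from the ``otherwise $\square$'' clauses of (\ref{eq: definition_table}).

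No step is a genuine obstacle; the argument is bookkeeping. The only point requiring care is rule (A) at boundary cells. There I would split into the cases $i=0$, $i=n$, $j=0$ and $j=m+1$, including the corners, and confirm that the neighbours falling outside the rectangle are quiescent. I would also observe that $C\in\Config_{n,m}$ guarantees that no cell inside the rectangle is quiescent.
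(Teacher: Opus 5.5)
Your proposal is correct and is essentially the paper's proof. Both check rules (A)--(E) one by one against the defining equations of $\Table_\varphi(\tuple a)$, and the only real work is done by the identities $\bigl(\bigvee_{u\le j}C_{i,u}\bigr)(\tuple a)=\bigl(\bigvee_{u\le j-1}C_{i,u}\bigr)(\tuple a)\vee C_{i,j}(\tuple a)$ and $\bigl(\bigwedge_{v\le i}C_v\bigr)(\tuple a)=\bigl(\bigwedge_{v\le i-1}C_v\bigr)(\tuple a)\wedge C_i(\tuple a)$.

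The differences are cosmetic. You first reduce to $\Table_\varphi(\tuple a)$ using the fact that the rules do not mention labels, whereas the paper works with $C$ directly. You also treat (C3) and the boundary cases of (A) explicitly, which the paper leaves implicit.

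One small point: (C1) needs the same boundary reading you give for (A). Taken literally, the quiescent neighbours above row $0$ and below row $n$ have $a=\square$, so the rule would fail there; the paper glosses over this as well.
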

\begin{proof}
Fix a configuration $C$ such that $C \sim \Table_\varphi(\tuple a)$. We show that each cell of $C$ satisfies the local correctness rules.

 Clearly, the statement holds for the $\coord$ and $a$ components from the definition. Moreover, for $\flag$ the condition is exactly the same as in local correctness rules.

 Fix $(i,j) \in \{1,2,\dots,n\}\times\{1,2,\dots,m\}$. Then $c = (i,j,s) \in C$ for some $s \in S_{n,m}$ and $\pd(s) =\bigl(\bigvee_{u\le j} C_{i,u}\bigr)(\tuple a)$. Assume first that $j\geq 2$. Then
 \begin{align*}
      \pd(s) =&\bigl(\bigvee_{u\le j} C_{i,u}\bigr)(\tuple a)  = \bigl(\bigvee_{u\le j-1} C_{i,u}\bigr)(\tuple a) \vee C_{i,j}
 (\tuple a) \\&= \pd(s') \vee (\flag(s)=1 \wedge a(s)=1)
\;\vee\;
(\flag(s)=-1 \wedge a(s)=0),
 \end{align*}
where $s'$ is the state of cell $c' = (i, j-1, s')\in C$, which is exactly the left neighbour of $c$.

If $j=1$, then
 \begin{align*}
      \pd(s) =&C_{i,1}(\tuple a)  =  (\flag(s)=1 \wedge a(s)=1)
\;\vee\;
(\flag(s)=-1 \wedge a(s)=0).
 \end{align*}

 Assume now that $i\ge 2$ and $j=m+1$. Then
 \begin{align*}
     \pc(s) = \bigl(\bigwedge_{v\le i} C_v\bigr)(\tuple a)  = \bigl(\bigwedge_{v\le i-1} C_v\bigr)(\tuple a) \wedge C_i(\tuple a) = \pc(s') \wedge \pd(s''),
 \end{align*}
    where $s'$ is the state of cell $c' = (i-1, m+1, s') \in C$ (upper neighbour of $c$) and $s''$ is the state of cell $c'' = (i, m, s'') \in C$ (left neighbour of $c$).

    The cases where $\pc(s)=\square$ or $\pd(s) = \square$ coincide with the same cases in local correctness rules. 

    All local correctness rules are satisfied in
$C$.
\end{proof}

Moreover, the converse implication also holds: if a configuration is locally correct and its $0$th row encodes the assignment $\tuple a$, then the whole configuration is uniquely determined except for the label components. In other words, local correctness forces the configuration to be similar to $\Table_\varphi(\tuple a)$.

\begin{lemma}\label{lm:new_lemma}
Let $\tuple a=(a_1,a_2,\dots,a_m)\in\{0,1\}^m$ and let
$C\in\Config_{n,m}$.
If $C$ is locally correct and, for every $j\in\{1,2,\dots,m\}$, every cell
with state $s$ satisfying $\coord(s)=(0,j)$ also satisfies $a(s)=a_j$, then
$C$ is similar to $\Table_\varphi(\tuple a)$.
\end{lemma}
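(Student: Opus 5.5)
We first show that local correctness forces every cell at position $(i,j)$ in the rectangle to have written coordinates $\coord(s)=(i,j)$. Then we read off the remaining components row by row and column by column, by induction.

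\textbf{Step 1 (coordinates).} Every cell in the rectangle is non-quiescent, and every cell outside it is quiescent. We use the index-consistency rule (A) in the direction ``$\coord(s)=(i,j)$ implies the neighbours carry the expected coordinates or $\tuple q$''. Consider the cell physically at $(0,0)$. Its left and upper neighbours are quiescent. We argue that its written coordinate must be $(0,0)$: if it carried some $(i',j')$ with $i'\ge 1$ or $j'\ge 1$, then (A) would require the above or left neighbour to carry coordinates $(i'-1,j')$ or $(i',j'-1)$, i.e.\ to be non-quiescent, a contradiction. The exception is when $i'=0$ or $j'=0$, and then the other coordinate is handled symmetrically. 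Once the corner is fixed, (A) propagates: a cell with $\coord(s)=(i,j)$ forces its right neighbour to have coordinates $(i,j+1)$ and its below neighbour to have $(i+1,j)$. A double induction (first along row $0$, then down each column) therefore gives $\coord=(i,j)$ at physical position $(i,j)$ for all $(i,j)\in[n+1]\times[m+2]$.

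This step is the main obstacle. The bookkeeping must check that the boundary clauses of (A), which demand quiescent neighbours outside the table, exclude every wrong coordinate at the corner. It must also check that the propagation never leaves the rectangle. The latter holds because the rectangle has exactly the size $(n+1)\times(m+2)$ dictated by the coordinate ranges.

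\textbf{Step 2 (remaining components).} With coordinates fixed, the technical conditions (E1)--(E4) give $\square$ in exactly the components where the table $\Table_\varphi(\tuple a)$ has $\square$, and formula consistency (B) gives the correct $\flag$ values. By hypothesis, row $0$ has $a(s)=a_j$. Rule (C1) transports this value down each column, by induction on $i$, so $a(s)=a_j$ throughout column $j$.

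Next, fix a row $i\ge 1$ and induct on $j$. Rule (D2) gives $\pd=C_{i,1}(\tuple a)$ at $j=1$. Rule (D1) gives $\pd=\bigl(\bigvee_{u\le j-1}C_{i,u}\bigr)(\tuple a)\vee C_{i,j}(\tuple a)$; this step uses the already-established values of $\flag$ and $a$. At $j=m$ we obtain $\pd=C_i(\tuple a)$.

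Finally, induct on $i$ in the last column. Rule (C3) gives $\pc=C_1(\tuple a)$ at $i=1$, and rule (C2) gives $\pc=\bigl(\bigwedge_{v\le i-1}C_v\bigr)(\tuple a)\wedge C_i(\tuple a)$.

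\textbf{Conclusion.} Every component except the label now agrees with the defining equations of $\Table_\varphi(\tuple a)$, so $C\sim\Table_\varphi(\tuple a)$. These computations mirror those in Lemma~\ref{lm:table_loc_correct}, run in reverse.
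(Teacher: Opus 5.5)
Your proof is correct and takes essentially the same route as the paper: index consistency (A) together with the quiescent boundary pins down the coordinates, then (C1) propagates $a$ down the columns, (D2)/(D1) give $\pd$ by induction on $j$, and (C3)/(C2) give $\pc$ by induction on $i$. The only difference is cosmetic. You anchor the coordinates at the single corner $(0,0)$ and propagate right and down, whereas the paper runs separate inductions for the row index (anchored at the $0$th row) and the column index; you also make explicit the roles of (B) and (E1)--(E4), which the paper leaves implicit.
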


\begin{proof}
We show that every component of the state of each cell apart from label coincides with the corresponding entry
of $\Table_\varphi(\tuple a)$.

\paragraph{Coordinates.}
We show that for every cell $c=(i,j,s)$ with 
$(i,j)\in [n+1]\times[m+2]$ it holds
\[
\coord(s)=(i,j).
\]

We first prove correctness of the row index by induction on $i$,
using local correctness rule~(A).

\medskip
\noindent
\emph{Base case ($i=0$).}
Let $c=(0,j,s)$ for some $j\in[m+2]$, and let $c'$ be the cell above
$c$. Since $C\in\Config_{n,m}$, the cell $c'$ is outside the finite
part of the table and therefore is in the quiescent state $\tuple q$.
By index consistency~(A), a cell whose above neighbour is in state
$\tuple q$ must have first coordinate $0$. Hence
\[
\coord(s)=(0,j)
\]
for some $j$, and therefore the row index is correct.

\medskip
\noindent
\emph{Induction step.}
Assume that for some $i\ge 0$ every cell in rows $0, 1, \dots, i$ has correct
first coordinate in its state. 

Let $c=(i+1,j,s)$ and let $c'=(i,j,s')$ be the
cell above $c$.  By inductive assumption $\coord(s') = (i,j)$. By rule~(A), as $c$ is below neighbour of $c'$, $\coord(s) =(i+1, j)$. 
Thus every cell in row $i+1$ has the correct first coordinate.
This completes the induction on $i$.

An analogous induction on the column index $j$ shows that the second
coordinate is also correct. 

In the remainder of the proof, we rely on each cell’s state containing its correct coordinates.

\paragraph{Assignment values.}
By assumption, the $0$th row contains the correct assignment values $a_j$.
By vertical consistency~(C1), these values are propagated unchanged to all cells
below the $0$th row. By an easy induction on row index, every cell with coordinates $(i,j)$ for $i \in \{1, 2, \dots, n\}$ and $j \in \{1, 2, \dots, m\}$ contains the
correct value $a_j$.

\paragraph{Partial disjunctions.}
We show that for every $i\in\{1,\dots,n\}$ and every $j\in\{1,\dots,m\}$,
every cell with state $s$ such that $\coord(s)=(i,j)$ satisfies
\[
\mathsf{pd}(s)=\bigl(\bigvee_{u\le j} C_{i,u}\bigr)(\tuple a).
\]
The proof is by induction on~$j$.

\emph{Base case $j=1$.}
By rule~(D2), $\mathsf{pd}(s)=1$ if and only if either
$\flag(s)=1$ and $a_1=1$, or $\flag(s)=-1$ and $a_1=0$.
In both cases, the literal in column~$1$ satisfies clause~$C_i$,
so $C_{i,1}(\tuple a)=1$.
If none of these cases holds, then either $x_1$ does not satisfy $C_i$
or does not occur in it, and hence $C_{i,1}(\tuple a)=0$.
Thus $\mathsf{pd}(s)=C_{i,1}(\tuple a)$.

\emph{Induction step.}
Assume the claim holds for column $k-1$, and let $s$ be a state such that
$\coord(s) = (i,k)$.  
Let $s'$ be the state of its left neighbour.

Assume first $\mathsf{pd}(s')=1$. (D1) implies that $\mathsf{pd}(s)=1$. Moreover, by inductive assumption $\bigl(\bigvee_{u \leq k-1} C_{i,u}\bigr)(\tuple a) = \pd(s') = 1$, which means  
\[
\bigl(\bigvee_{u \leq k} C_{i,u}\bigr)(\tuple a) = \bigl(\bigvee_{u \leq k-1} C_{i,u}\bigr)(\tuple a) \vee C_{i, k} (\tuple a)= 1 \vee C_{i, k} (\tuple a) =1.
\]
 Therefore $\mathsf{pd}(s) = \bigl(\bigvee_{u \leq k} C_{i,u}\bigr)(\tuple a)$ in this case.

Assume $\pd(s')=0$. Here (D1) yields that $\mathsf{pd}(s)=1$ if and only if $\flag(s)=1\wedge a(s)=1$ or $\flag(s)=-1\wedge a(s)=0$. Similarly as in the base step, if $\mathsf{pd}(s)=1$, then in both cases $a_{k}$ satisfies $C_i$ and 
 \[
 \bigl(\bigvee_{u \leq k} C_{i,u}\bigr)(\tuple a) = \bigl(\bigvee_{u \leq k-1} C_{i,u}\bigr)(\tuple a) \vee C_{i,k}(\tuple a) = \bigl(\bigvee_{u \leq k-1} C_{i,u}\bigr)(\tuple a) \vee 1=1.
 \]
   If $\mathsf{pd}(s)=0$, then either $a_{k}$ does not satisfy or does not belong to $C_i$. In both of these cases, as $\bigl(\bigvee_{u \leq k-1} C_{i,u}\bigr)(\tuple a) =0$ by inductive assumption, then 
    \[
 \bigl(\bigvee_{u \leq k} C_{i,u}\bigr)(\tuple a) = \bigl(\bigvee_{u \leq k-1} C_{i,u}\bigr)(\tuple a) \vee C_{i,k}(\tuple a) = 0 \vee 0=0,
 \]
   
 and we are done.

\paragraph{Partial conjunctions.}
Finally, we show that for every $i\in\{1,\dots,n\}$ and every cell with $s$ and
$\coord(s)=(i,m+1)$,
\[
\mathsf{pc}(s)=\bigl(\bigwedge_{v\le i} C_v\bigr)(\tuple a).
\]
The proof is by induction on~$i$.

\emph{Base case $i=1$.}
By (C3) $\mathsf{pc}(s)=1$ if and only if $\mathsf{pd}(s')=1$ for the left neighbour with state $s'$. We have already shown that $\mathsf{pd}(s')=1$ if and only if $\bigl(\bigvee_{u \leq m} C_{1,u}\bigr)(\tuple a) = 1$. It remains to notice that $(\bigvee_{u \leq m} C_{1,u}\bigr)$ is exactly $C_1$ and we are done.

\emph{Induction step.}
Assume the claim holds for row $l-1$, and let $s$ be a state such that
$\coord(s)=(l,m+1)$. Let $s''$ be the state of the above neighbour.

If $\mathsf{pc}(s'')=0$, then according to (C2), $\mathsf{pc}(s)=0$. By inductive assumption $\bigl(\bigwedge_{v \leq l-1}C_v\bigr) (\tuple a) =\mathsf{pc}(s)=0$, therefore
\[
\bigl(\bigwedge_{v \leq l}C_v\bigr) (\tuple a) = \bigl(\bigwedge_{v \leq l-1}C_v\bigr) (\tuple a) \wedge C_{l}(\tuple a) = 0 \wedge C_{l}(\tuple a) =0.
\]

Assume $\mathsf{pc}(s'')=1$. According to (C2) $\mathsf{pc}(s)=1$ if and only if for left neighbour of $c$ with state $s'$ it holds $\mathsf{pd}(s')=1$. Similarly as in the base case, we have shown in the first part of the proof that $\mathsf{pd}(s') = (\bigvee_{u \leq m} C_{l,u}\bigr)(\tuple a) = C_{l}(\tuple a)$. So $\mathsf{pc}(s)=1$ if and only if $C_{l}(\tuple a) =1$. Then 
\[
\bigl(\bigwedge_{v \leq l}C_v\bigr) (\tuple a) = \bigl(\bigwedge_{v \leq l-1}C_v\bigr) (\tuple a) \wedge C_{l}(\tuple a) = 1 \wedge  C_{l}(\tuple a) = C_{l}(\tuple a).
\]
This shows that $\mathsf{pc}(s)=1$ if and only if $\bigl(\bigwedge_{v \leq l}C_v\bigr) (\tuple a) =1$ and we are done.
\end{proof}

To define the local transition function, we must specify how $A_\varphi$ rewrites
the labels.
The idea is simple: if cell's state violates at least one local correctness rule
(that is, it witnesses an error), or if it is the output cell and its state contains the
value $0$, then the label remains unchanged.
In the opposite case its label is rewritten.

\begin{definition}[Blue and red cells]\label{def:blue_red}
Let $C \in \Config_{n,m}$ and let $c$ be a cell.  
We say that $c$ is \emph{blue} if the following holds:
\begin{itemize}
    \item $c$ is locally correct, and
    \item if $c$ is an output cell, then the value in the $\pc$ component of its state is $1$.
\end{itemize}
Otherwise, we say that $c$ is \emph{red}.

We say that a configuration $C \in \Config_{n,m}$ is \emph{blue}
if every cell $c$ in $C$ is blue.
\end{definition}

Note that the color of a cell does not depend on its label; i.e. if we change  some labels of some  cells but leave everything else in the configuration unchanged, then the color of all cells remains the same.

It remains to specify how a label of a blue cell is rewritten.
In this case, the new label is computed from its label and the label of one of the neighbouring cells, where
the chosen neighbour is determined by the coordinates in the state of the cell. We write this formally in the following definitions.

For simplicity, in the following definitions of direction and $\mathrm{suc}(i,j)$, assume that the number of clauses $n$ is odd; therefore, the number of rows in the finite part of the configuration is even. Otherwise, if the number of clauses is even, just modify the formula by adding a "copy" of the last clause.

\begin{definition}[Direction and successor]\label{def:successor}
Let \( C \in \Config_{n,m} \) and let \( c \) be a cell
in state \( s \neq \tuple q \) with \( \coord(s) = (i,j) \), $i \in [n+1], j \in [m+2]$.

We associate with \( c \) a \emph{direction} 
\( d_{ij} \in \{\to,\leftarrow,\uparrow,\downarrow\} \) defined by
\[
d_{ij} =
\begin{cases}
\to 
& \text{if } (i,j) = (0,0) 
  \text{ or } \bigl(i \equiv 0 \!\!\pmod{2} \text{ and } 1 \le j \le m\bigr), \\[6pt]

\leftarrow 
& \text{if } (i,j) = (n,1) 
  \text{ or } \bigl(i \equiv 1 \!\!\pmod{2} \text{ and } 2 \le j \le m+1\bigr), \\[6pt]

\downarrow 
& \text{if } \bigl(i \equiv 1 \!\!\pmod{2} \text{ and } j=1\bigr)
  \text{ or } \bigl(i \equiv 0 \!\!\pmod{2} \text{ and } j=m+1\bigr), \\[6pt]

\uparrow 
& \text{if } i \ge 1 \text{ and } j=0.
\end{cases}
\]

Then $\mathrm{suc}(i,j)$ denotes the pair of coordinates that is
adjacent to $(i,j)$ in the direction $d_{ij}$, that is,

\[
\mathrm{suc}(i,j) =
\begin{cases}
(i, j+1), & \text{if } d_{ij} = \to, \\[6pt]
(i, j-1), & \text{if } d_{ij} = \leftarrow, \\[6pt]
(i+1, j), & \text{if } d_{ij} = \downarrow, \\[6pt]
(i-1, j), & \text{if } d_{ij} = \uparrow.
\end{cases}
\]
\end{definition}

Note that the directions are defined so that, in any configuration similar to $\Table_\varphi(\tuple a)$, they form a snake-like traversal of the entire finite part with coordinates in
$[n+1]\times[m+2]$; see Table~\ref{tab:arrows}.

\begin{table}[h]
\centering
\begin{tabular}{|c|c|c|c|c|}
 \hline
$\to$ & $\to$ & $\to$ & $\to$ &  $\downarrow$ \\
 \hline
$\uparrow$ & $\downarrow$ & $\leftarrow$ & $\leftarrow$ & $\leftarrow$ \\
 \hline
$\uparrow$ & $\to$ & $\to$ & $\to$ & $\downarrow$ \\
 \hline
$\uparrow$ & $\downarrow$ & $\leftarrow$ & $\leftarrow$ & $\leftarrow$ \\
 \hline
$\uparrow$ &  $\to$ & $\to$ & $\to$ & $\downarrow$  \\
 \hline
$\uparrow$ &  $\leftarrow$ & $\leftarrow$ & $\leftarrow$ & $\leftarrow$ \\
 \hline
\end{tabular}
\caption{Directions in a $6\times5$ configuration similar to
$\Table_\varphi(\tuple a)$.}

\label{tab:arrows} 
\end{table}

We will use the fact that each cell has a unique neighbour in the
direction determined by $\mathrm{suc}(i,j)$.
This property is formalized in the following lemma.

\begin{lemma}\label{lm:exactly_one_successor}
    Assume $c \in \Config_{n,m}$ is a blue cell. Let $s$ be the state of $c$ and assume that $\coord(s) = (i,j)$ for some $(i,j) \in [n+1] \times [m+2]$. Then there exists exactly one neighbour $c'$ of $c$ with state $s'$ such that $\coord(s') = \mathrm{suc}(i,j)$.
\end{lemma}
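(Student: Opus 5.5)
Existence and uniqueness will be handled separately, and both reduce to the index-consistency rule~(A) of Definition~\ref{def:locally_correct}. The key observation is that $\mathrm{suc}(i,j)$ is always one of the four von Neumann neighbour positions of $(i,j)$ inside $[n+1]\times[m+2]$: right, left, below, or above. Since $c$ is blue, it is in particular locally correct, so rule~(A) applies to its state $s$ with $\coord(s)=(i,j)$.

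\textbf{Existence.} First I will check, case by case on Definition~\ref{def:successor}, that $\mathrm{suc}(i,j)$ lies inside the table. The cases are:
\begin{itemize}
\item $(0,0)$ and even rows with $1\le j\le m$ go right, to at most column $m+1$;
\item odd rows with $2\le j\le m+1$ go left, to at least column $1$, and $(n,1)$ goes left to $(n,0)$;
\item odd rows with $j=1$ (other than $i=n$) go down, to row $i+1\le n$;
\item even rows with $j=m+1$ go down; here I must check that $i\le n-1$, which holds because $n$ is odd, so $i$ even forces $i\ne n$;
\item column $0$ with $i\ge 1$ goes up, to row $i-1\ge 0$.
\end{itemize}
One point needs care: the cell $(n,1)$ matches both the $\leftarrow$ clause and the $\downarrow$ clause ($n$ is odd and $j=1$). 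I will read the definition with the explicit $(n,1)$ exception taking precedence, which is what Table~\ref{tab:arrows} shows. Once $\mathrm{suc}(i,j)$ is known to be a neighbour position inside the table, rule~(A) (or its boundary version) says the neighbour of $c$ in that direction has exactly those coordinates. That neighbour is $c'$.

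\textbf{Uniqueness.} Suppose some other neighbour $c''$ of $c$, with state $s''$, also had $\coord(s'')=\mathrm{suc}(i,j)$. By rule~(A), the four neighbours of $c$ carry the coordinates $(i,j\pm1)$ and $(i\pm1,j)$, or else are quiescent, in which case their coordinates are $(\square,\square)$. These four coordinate pairs are pairwise distinct, so only the neighbour in direction $d_{ij}$ can carry $\mathrm{suc}(i,j)$. That gives uniqueness.

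\textbf{Main obstacle.} The hard part is not conceptual. It is making sure the boundary cases of rule~(A) cover every direction used by $\mathrm{suc}$, and resolving the overlapping $(n,1)$ clause. I will go through the six cell types (corner, $0$th row, $0$th column, last column, main body, and the bottom row $i=n$) and confirm each time that the neighbour in direction $d_{ij}$ is inside the table and constrained by~(A).
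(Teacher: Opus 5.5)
Your proof is correct and follows essentially the same route as the paper: blueness gives local correctness, rule~(A) fixes the coordinates of each neighbour (or forces it to be quiescent), and $\mathrm{suc}(i,j)$ is always an in-table neighbour position, which gives both existence and uniqueness. Your version is more careful than the paper's, which only asserts that $d_{ij}$ never points outside the table. You check this case by case, use that $n$ is odd for even rows at $j=m+1$, and explicitly resolve the overlap of the $\leftarrow$ and $\downarrow$ clauses at $(n,1)$, which the paper's definition leaves ambiguous.
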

    We call this uniquely determined neighbour $c'$ the 
\emph{successor} of $c$.
\begin{proof}
Since $c$ is blue, it is locally correct and hence satisfies all local
correctness rules. In particular, by rule~(A) (index consistency),
the coordinate component of the state of each non-quiescent neighbour
is uniquely determined by its relative position.

First assume that $(i,j) \in \{1,2,\dots,n-1\} \times \{1,2,\dots,m\}$, that is, $c$ is not on the margin of the configuration.
Then all four von Neumann neighbours of $c$ are non-quiescent.
By rule~(A), the coordinate components of their states are:
\begin{itemize}
    \item right neighbour: $(i,j+1)$,
    \item left neighbour: $(i,j-1)$,
    \item upper neighbour: $(i-1,j)$,
    \item lower neighbour: $(i+1,j)$.
\end{itemize}
Hence exactly one of these neighbours has coordinate component
$\mathrm{suc}(i,j)$, and the statement follows in this case.

Now assume that $c$ lies on the margin of the configuration. Then some of its neighbours may be in the quiescent state.

Again by rule (A) at most one of its neighbours has coordinates $\mathrm{suc}(i,j)$. Moreover, by its definition, $d_{ij}$ never "points" outside the finite part of the configuration and therefore $\mathrm{suc}(i,j)$ always belongs to
$[n+1]\times[m+2]$. 

Thus even on the margin, $c$ has exactly one non-quiescent neighbour
whose coordinate component is $\mathrm{suc}(i,j)$.
\end{proof}

For the next definition, observe that local correctness, as introduced in Definition~\ref{def:locally_correct}, depends only on the state of a cell $c \in \Config_{n,m}$ and on the states of its von Neumann neighbours. Hence it is completely determined by the five states in this neighbourhood.

We therefore define local correctness at the level of states.
Let 
\[
(s_1,s_2,s_3,s_4,s_5) \in S_{n,m}^5,
\]
where $s_1$ is intended to be
the state of a cell, and $s_2, s_3, s_4, s_5$ are intended to be the
states of its right, left, below and above neighbour cells,
respectively.

We say that $s_1$ is \emph{locally correct with respect to}
$(s_2,s_3,s_4,s_5)$ if every cell $c$ with state $s_1$ whose
right, left, below and above neighbours have states
$s_2, s_3, s_4$ and $s_5$, respectively, is locally correct
in the sense of Definition~\ref{def:locally_correct}.

In the same way, the colour of a cell depends only on its state and the states of its neighbours.
Hence we say that $s_1$ is \emph{blue with respect to}
$(s_2,s_3,s_4,s_5)$ if every cell $c$ with state $s_1$ whose
right, left, below and above neighbours have states
$s_2,s_3,s_4,s_5$, respectively, is blue in the sense of
Definition~\ref{def:blue_red}.  Otherwise, we say that $s_1$ is \emph{red with respect to}
$(s_2,s_3,s_4,s_5)$. 

Finally, let $k \in \{2,3,4,5\}$.  
We say that $s_k$ is the \emph{successor of $s_1$ with respect to} $(s_2, s_3, s_4, s_5)$ if every cell $c$ with state $s_1$ whose
right, left, below and above neighbours have states
$s_2,s_3,s_4,s_5$, respectively, has successor with state $s_k$ according to Definition~\ref{def:successor}. This is well defined by Lemma~\ref{lm:exactly_one_successor}, which
states that every blue cell has exactly one successor.

We are now ready to define the local transition function of the automaton.
Its purpose is to modify only the label component of each cell, while
leaving all other components of the state unchanged.

Intuitively, the automaton propagates information along the
successor relation defined above. If a cell is locally correct and
has a successor, then in one step the automaton updates its label
by taking the direct sum of its own
label and the label of its successor. In all other cases, the label
remains unchanged.

Thus, the new label of a cell depends only on its current label and
on the label of its uniquely determined successor.

\begin{definition}[Local transition function $f_\varphi$]\label{def:local_transition_function}
The local transition function
\[
f_\varphi \; \colon \;S_{n,m}^5 \to S_{n,m}
\]
is defined as follows.

Let $(s_1,s_2,s_3,s_4,s_5) \in S_{n,m}^5$.
Then
\[
f_\varphi(s_1,s_2,s_3,s_4,s_5)
=
\begin{cases}
s'
& \parbox[t]{0.6\linewidth}{%
if $s_1$ is blue with respect to $s_2,s_3,s_4,s_5$,} \\[6pt]
s_1 
& \text{if $s_1$ is red with respect to $s_2,s_3,s_4,s_5$,}
\end{cases}
\]
where
\[
s' = (\coord(s_1),\flag(s_1),a(s_1),\pd(s_1),\pc(s_1),\, \mathrm{label}(s_1) \oplus \mathrm{label}(s_k)),
\]
for 
$k \in \{2,3,4,5\}$ such that $s_k$ is the successor of $s_1$ with respect to $(s_2, s_3, s_4, s_5)$.
\end{definition}

 We are now ready to define the cellular automaton $A_\varphi$.

\begin{definition}\label{def:A_varphi}
    The cellular automaton $A_\varphi$ is defined as a quadruplet \linebreak$(2, S_{n,m}, N, f_\varphi)$, where:
    \begin{itemize}
        \item the set of states $S_{n,m}$ is defined in Subsection~\ref{states},
        \item the neighbourhood $N$ is the von Neumann neighbourhood,
        \item the transition function $f_\varphi$ is defined in Definition~\ref{def:local_transition_function}.
    \end{itemize}
\end{definition}

Since $A_\varphi$ acts only on configurations inside $[n+1]\times[m+2]$, it is useful to view $A_\varphi$ as a map $\Config_{n,m} \rightarrow \Config_{n,m}$. 
Since $A_\varphi$ only rewrites the labels, it does not change the colors of the cells, which are determined by the other components of the cell state. 
As a further consequence of the fact that $A_\varphi$ rewrites only labels, for every configuration $C \in \Config_{n,m}$ we have $C \sim A_\varphi(C)$.

Note that $A_\varphi(C)(c)$ denotes the state of the image of cell $c$. This notation will be used in the proof of Lemma~\ref{lm:global_injective}.

\begin{lemma}
    The size of $A_\varphi$ is bounded above by
$O((n m)^5 \log(nm)) \le O(|\varphi|^{11})$, where $|\varphi|$ is the size of the formula.
\end{lemma}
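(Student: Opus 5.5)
The plan is to apply Lemma~\ref{lm:CA_bound} with $v=5$ (the von Neumann neighbourhood) and $s=|S_{n,m}|$. This gives a table of size at most $s^5\log s$, so everything reduces to bounding $s$ by $O(nm)$.

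\textbf{Step 1: count the states.} Count $S_{n,m}$ class by class, using the explicit descriptions of $S_1,\dots,S_7$:
\begin{itemize}
\item $|S_1|=|S_4|=2$;
\item $|S_2|=2n$;
\item $|S_3|=4m$, since $a$ and the label each range over $\{0,1\}$;
\item $|S_5|=4n$;
\item $|S_6|\le 2^4\cdot nm$, or at most $3\cdot 2^3\cdot nm$ if $\flag$ is taken to range over $\{-1,0,1\}$ as intended;
\item $|S_7|=1$.
\end{itemize}
Summing, $s\le c\cdot nm$ for an absolute constant $c$ (using $n,m\ge 1$). The same bound also follows crudely from the product bound in~\eqref{eq:set_of_states_bound}, which gives $(n+2)(m+3)\cdot 4\cdot 3^4=O(nm)$.

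\textbf{Step 2: plug into the lemma.} Substituting $s=O(nm)$ and $v=5$ into Lemma~\ref{lm:CA_bound} gives
\[
s^5\log s\le (c\,nm)^5\log(c\,nm)=O\bigl((nm)^5\log(nm)\bigr).
\]

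\textbf{Step 3: relate to $|\varphi|$.} The formula has $n$ clauses over $m$ variables. I will assume each variable occurs in $\varphi$, so that $m\le|\varphi|$; also $n\le|\varphi|$, since each clause contributes at least one symbol. The padding step for even $n$ (duplicating the last clause) at most doubles $|\varphi|$, which does not affect the asymptotics. Hence $nm\le|\varphi|^2$, and
\[
(nm)^5\log(nm)\le |\varphi|^{10}\cdot 2\log|\varphi|\le O(|\varphi|^{11}).
\]

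The only delicate point is this last step: how $|\varphi|$ is measured and the justification of $n,m\le|\varphi|$. Variables not occurring in $\varphi$ can be discarded without changing satisfiability. With $n,m\le|\varphi|$ in hand, the estimate is immediate and in fact loose, since $\log|\varphi|\le|\varphi|$.
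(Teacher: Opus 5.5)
Your proposal is correct and follows the same route as the paper: bound $|S_{n,m}|$ by $O(nm)$ (the paper reads this off Equation~\eqref{eq:set_of_states_bound}), apply Lemma~\ref{lm:CA_bound} with $v=5$, and then compare with $|\varphi|$. Your class-by-class count of the states and your justification of $n,m\le|\varphi|$ (discarding unused variables and noting that the clause-duplication padding only changes $|\varphi|$ by a constant factor) fill in details that the paper leaves implicit.
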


\begin{proof}
According to Equation~\ref{eq:set_of_states_bound}, the size of $S_{n,m}$ is $O(nm)$. As the size of von Neumann neighbourhood is $5$, applying Lemma~\ref{lm:CA_bound} yields that the size of $A_\varphi$ is bounded above by
\[
(O(nm))^5 \log(nm) = O((nm)^5)\log(nm) \leq O(|\varphi|^{11}).
\]
\end{proof}

\begin{lemma}\label{lm:blue_table}

 Fix $C \in \Config_{n,m}$ such that $C$ is blue and let
 \[
 U = \{C' \in \Config_{n,m} \mid C' \sim C\}.
 \]
 Then $A_\varphi : U\to U$ is not injective.
\end{lemma}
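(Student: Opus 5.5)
The plan is to exhibit two distinct configurations in $U$ with the same image under $A_\varphi$. The natural candidates are the all-zero labelling and the all-one labelling of $C$. Since $C$ is blue, every cell is blue, and colour does not depend on labels, so every configuration in $U$ is blue. Hence at every cell $A_\varphi$ replaces the label by $\mathrm{label}(c)\oplus\mathrm{label}(c')$, where $c'$ is the successor of $c$ given by Lemma~\ref{lm:exactly_one_successor}. The non-label components are untouched, so $A_\varphi$ maps $U$ into $U$.

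Fix the underlying non-label data of $C$ and identify each $C'\in U$ with its labelling $\ell\colon[n+1]\times[m+2]\to\{0,1\}$. Every cell has exactly one successor, and which cell that is depends only on the coordinates written in the states, which are the same throughout $U$. So the successor map $\sigma$ is a fixed function on the cells of the rectangle. In these terms $A_\varphi$ acts on labellings by
\[
\ell \mapsto \ell + \ell\circ\sigma \pmod 2 ,
\]
which is a linear map over $\mathbb{F}_2$ on the finite space $\{0,1\}^{(n+1)(m+2)}$.

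It suffices to find a nonzero $\ell$ in its kernel, since then $\ell$ and the zero labelling have the same image. The constant labelling $\ell\equiv 1$ works: for every cell $c$, $1\oplus 1=0$. Concretely, $C_0$ (all labels $0$) and $C_1$ (all labels $1$) both belong to $U$, they are distinct, and both are mapped to $C_0$. Therefore $A_\varphi$ is not injective on $U$.

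The only step needing care is that every cell really has a successor, so that no cell keeps its label unchanged. Lemma~\ref{lm:exactly_one_successor} covers this for all blue cells, including those on the margin, using the fact that $d_{ij}$ never points outside the table. A cell with no successor would break the argument, because its label would stay $1$ in the image of $C_1$ while being $0$ in the image of $C_0$. Since the configuration is entirely blue, this does not occur. No injectivity or bijectivity of $\sigma$ is needed: the snake structure of $\sigma$ plays no role here, and only its totality matters.
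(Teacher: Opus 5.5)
Your proposal is correct and is essentially the paper's proof: compare the all-$0$ and all-$1$ labellings of $C$. Both are sent to the all-$0$ labelling, because every cell is blue and replaces its label by its own label XOR its successor's label. Your $\mathbb{F}_2$-linear reformulation and your explicit use of Lemma~\ref{lm:exactly_one_successor} for the existence of successors are harmless additions that spell out what the paper's short argument takes for granted.
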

\begin{proof}
Consider $C_1\sim C$ and $C_2 \sim C$  that are identical except for the labels:
in $C_1$ every cell has label $0$, and in $C_2$ every cell has label $1$.
Since all cells are blue, each cell updates its label by taking the direct sum of its label and the label of its successor.
In both configurations the resulting labels are $0$ for every cell.
Hence the two configurations have the same image under $A_\varphi$, and the restriction is not injective.
\end{proof}

\begin{definition}
    \emph{Pointed chain}  in a configuration $C \in \Config_{n,m}$ is any sequence of cells $(c_1, \dots, c_s) \in C$, where $s \in \mathbb{N}$, such that:
    \begin{itemize}
        \item $c_i$ is blue for $i \in \{1, \dots, s-1\}$ and red for $i=s$
        \item $c_{i+1}$ is the successor of $c_{i}$ for $i \in \{1, \dots, s-1\}$
    \end{itemize}
\end{definition}

\begin{lemma}\label{lm:global_injective}
Let $C, C' \in \Config_{n,m}$ be configurations such that $C \sim C'$, and let
\[
\gamma = (c_1, c_2, \dots, c_t), \qquad t \in \mathbb{N},
\]
be a pointed chain in $C$. (Hence $\gamma$ is also a pointed chain in $C'$.)

For each $i \in \{1,2,\dots,t\}$, let $s_i$ be the state of $c_i$ in $C$ and
let $s_i'$ be the state of $c_i$ in $C'$.  
Assume that there exists an index $k \in \{1,2,\dots,t\}$ such that
\[
\mathrm{label}(s_k) \neq \mathrm{label}(s_k').
\]

Then there exists an index $\ell \in \{1,\dots,t\}$ such that the states 
\[
A_{\varphi}(C)(c_\ell)
\quad\text{and}\quad
A_{\varphi}(C')(c_\ell)
\]
have different labels.

(Informally, the map $A_{\varphi}$ is injective when restricted to pointed chains.)
\end{lemma}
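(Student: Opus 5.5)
The plan is a backward induction along the chain, starting from the last differing label. Let $k$ be the \emph{largest} index in $\{1,\dots,t\}$ with $\mathrm{label}(s_k)\neq\mathrm{label}(s_k')$. Two preliminary facts are needed.

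First, colours depend only on non-label components. The remark after Definition~\ref{def:blue_red} says exactly this, and $C\sim C'$. Hence $c_1,\dots,c_{t-1}$ are blue and $c_t$ is red in both $C$ and $C'$, and successors coincide in both configurations. In particular, $\gamma$ is a pointed chain in $C'$ as well.

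Second, we need the local update rules. By Definition~\ref{def:local_transition_function}, a red cell keeps its state, so its new label equals its old label. A blue cell $c_i$ with $i<t$ gets the new label $\mathrm{label}(s_i)\oplus\mathrm{label}(s_{i+1})$ in $C$, and $\mathrm{label}(s_i')\oplus\mathrm{label}(s_{i+1}')$ in $C'$. Here we use that $c_{i+1}$ is the unique successor of $c_i$ by Lemma~\ref{lm:exactly_one_successor}, and that the successor state $s_k$ in Definition~\ref{def:local_transition_function} is precisely the state of this successor cell.

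With these facts, take $\ell=k$ and split into two cases.

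\begin{enumerate}
\item If $k=t$, then $c_t$ is red. The new labels are $\mathrm{label}(s_t)$ and $\mathrm{label}(s_t')$, which differ.
\item If $k<t$, then $c_k$ is blue. By maximality of $k$, $\mathrm{label}(s_{k+1})=\mathrm{label}(s_{k+1}')$. The new labels are $\mathrm{label}(s_k)\oplus\mathrm{label}(s_{k+1})$ and $\mathrm{label}(s_k')\oplus\mathrm{label}(s_{k+1})$. These differ because $\oplus$ with a fixed bit is a bijection of $\{0,1\}$.
\end{enumerate}

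In both cases $A_\varphi(C)(c_k)$ and $A_\varphi(C')(c_k)$ have different labels, which proves the lemma.

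The only step that needs care is checking that the successor used by $f_\varphi$ at $c_k$ really is $c_{k+1}$ in both configurations. This reduces to the invariance of colours and coordinates under $\sim$, together with the uniqueness in Lemma~\ref{lm:exactly_one_successor}. The arithmetic itself is trivial.
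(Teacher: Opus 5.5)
Your proof is correct and is essentially the paper's argument. The paper assumes all new labels along $\gamma$ agree. It then propagates equality of the old labels backwards from the red endpoint $c_t$, whose label is unchanged, via $\mathrm{label}(s_i)\oplus\mathrm{label}(s_{i+1})$; your choice of the largest differing index $k$ with $\ell=k$ is the direct form of that backward induction, and your explicit check that colours and successors agree in $C$ and $C'$ because $C\sim C'$ spells out a step the paper only asserts in parentheses.
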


\begin{proof}

Suppose that $A_\varphi(C)(c_j)=A_\varphi(C')(c_j')$ for all $j \in \{1, 2, \dots, t\}$. As their labels are computed using the local transition rule from Definition~\ref{def:A_varphi}, we can write it as
\begin{align*}
      \bigl(\mathrm{label}(s_1) & \oplus \mathrm{label}(s_2),\, \dots,\, \mathrm{label}(s_{t-1}) \oplus \mathrm{label}(s_{t}),\, \mathrm{label}(s_{t})\bigr)
      \\ & =
    \bigl(\mathrm{label}(s_1') \oplus \mathrm{label}(s_2'),\, \dots,\, \mathrm{label}(s_{t-1}') \oplus \mathrm{label}(s_{t}'),\, \mathrm{label}(s_{t}')\bigl).  
\end{align*}

From the last coordinate we obtain $\mathrm{label}(s_{t}) = \mathrm{label}(s_{t}')$.  
Next, comparing the $(t-1)$-th coordinates gives
\[
    \mathrm{label}(s_{t-1}) \oplus \mathrm{label}(s_{t}) = \mathrm{label}(s_{t-1}') \oplus \mathrm{label}(s_{t}'),
\]
which implies
\begin{align*}
        \mathrm{label}&(s_{t-1})  = \bigl(\mathrm{label}(s_{t-1})\oplus \mathrm{label}(s_{t})\bigr) \oplus \mathrm{label}(s_{t}) \\ & = \bigl(\mathrm{label}(s_{t-1}')\oplus \mathrm{label}(s_{t}')\bigr) \oplus \mathrm{label}(s_{t}) \\ & = \mathrm{label}(s_{t-1}') \oplus \bigl(\mathrm{label}(s_{t}) \oplus \mathrm{label}(s_{t})\bigr) = \mathrm{label}(s_{t-1}').
\end{align*}

By induction, we conclude
\[
    \mathrm{label}(s_{i})  = \mathrm{label}(s_{i}')  \quad \text{for all } 1 \leq i \leq t.
\]
Hence 
\[
\bigl(\mathrm{label}(s_{1}), \mathrm{label}(s_{2}) , \dots, \mathrm{label}(s_{t}) \bigr) = \bigl(\mathrm{label}(s_{1}'), \mathrm{label}(s_{2}') , \dots, \mathrm{label}(s_{t}') \bigr),
\]
and the statement holds.

\end{proof}

Analogously to pointed chains we define a cycle.

\begin{definition}
    A \emph{cycle} is any sequence of cells $c_1, \dots, c_s \in \Config_{n,m}$, where $s \in \mathbb{N}$, such that:
    \begin{itemize}
        \item $c_i$ is blue for $i \in \{1, \dots, s\}$ and 
        \item $c_{i+1}$ is the successor of $c_{i}$ for $i \in \{1, \dots, s-1\}$, $c_1$ is the successor of $c_s$
    \end{itemize}
\end{definition}

\begin{lemma}\label{lm:at_least_one_red}
    If $C \in \Config_{n,m}$ contains a cycle, then every cell in $C$ belongs to this cycle.
\end{lemma}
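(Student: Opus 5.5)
Suppose $C$ contains a cycle $c_1,\dots,c_s$. Every cell of the cycle is blue, hence locally correct, and each $c_{i+1}$ is the successor of $c_i$. The plan is to show that the written coordinates along the cycle are forced to follow the snake traversal of Table~\ref{tab:arrows}. Since that traversal is a single Hamiltonian cycle on $[n+1]\times[m+2]$, the cycle must pass through all $(n+1)(m+2)$ cells.

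The first step is to prove that the written coordinates coincide with the actual positions, at least for the cells on the cycle. Rule~(A) says that a blue cell with written coordinates $(i,j)$ has neighbours whose written coordinates are $(i\pm1,j)$ and $(i,j\pm1)$. Moreover, a neighbour is quiescent exactly when the corresponding index falls outside $[n+1]\times[m+2]$. Since $C\in\Config_{n,m}$, the quiescent cells are precisely those outside the rectangle.

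Take a cell $c$ of the cycle with written coordinates $(i,j)$. Iterating the successor map moves the written coordinate by exactly the displacement $\mathrm{suc}(i,j)-(i,j)$. This is also the actual displacement, because by Lemma~\ref{lm:exactly_one_successor} and rule~(A) the successor is the neighbour in direction $d_{ij}$. Hence actual position minus written coordinate is an invariant along the cycle.

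Next, the written coordinates along the cycle trace the snake order. The snake is one cycle that visits every element of $[n+1]\times[m+2]$; this is where $n$ odd is used. Following $(0,0)\to$ row $0\to\cdots\to(n,1)\to(n,0)\to\uparrow\cdots\to(0,0)$ shows that $d_{ij}$ defines a single cyclic permutation of the $(n+1)(m+2)$ coordinate pairs. So the cycle $c_1,\dots,c_s$, read through written coordinates, covers every pair. In particular some cell $c$ on the cycle has written coordinate $(0,0)$. For that cell rule~(A) forces the above and left neighbours to be quiescent, so $c$ sits in the actual top-left corner of the rectangle. Its actual position is therefore $(0,0)$, and the invariant offset is zero.

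Consequently the cycle contains, for every $(i,j)$, the cell at actual position $(i,j)$. Every cell of $C$ therefore lies on the cycle.

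The main obstacle is the boundary bookkeeping, namely verifying that the snake is a single cycle and that the boundary cases of rule~(A) really pin down the actual position. If the offset argument turns out to be awkward, the fallback is to note that every cell on the cycle is blue. The induction from the proof of Lemma~\ref{lm:new_lemma} would then give correct coordinates, but that induction uses local correctness of all cells, so I would rely on the offset argument above instead.
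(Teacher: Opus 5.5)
Your proposal is correct, and its core idea is the same as the paper's. On written coordinates the successor map is the snake permutation $(i,j)\mapsto\mathrm{suc}(i,j)$. This is a single cycle through all of $[n+1]\times[m+2]$, using $n$ odd and the exceptional direction $\leftarrow$ at $(n,1)$, so going around the cycle meets every coordinate pair.

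The paper proves this by starting from a cell with written coordinate $(n,0)$. It inducts up column $0$, then along row $0$, then row by row, and remarks that the choice of starting cell was only for convenience. Your cyclic-permutation formulation handles an arbitrary starting cell directly.

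What you add is the offset invariant (actual position minus written coordinate is constant along the cycle), anchored at the written-$(0,0)$ cell through its quiescent left and above neighbours. This is exactly what justifies the passage from ``some cell of the cycle carries written coordinate $(i,j)$'' to ``the cell at actual position $(i,j)$ lies on the cycle''. The paper's proof leaves that step implicit: it speaks of ``all cells whose coordinate component is $(i,j)$'' and ends with a sketch that local correctness forces the neighbours.

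You are also right that the coordinate induction from Lemma~\ref{lm:new_lemma} cannot be used here, since it presupposes that every cell is locally correct.
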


\begin{proof}
    Let $\mathcal{Z}$ be a cycle in $C$.
Since successors of cells in a cycle also belong to the cycle, it suffices to show
that from any one cell in $\mathcal{Z}$ we can reach all other cells of the finite
configuration by repeatedly following successor edges.

By the definition of a cycle, $\mathcal{Z}$ contains at least one blue cell.
Since this cell is blue, the coordinate component of its state $s$ is not $\square$.
First assume that $\coord(s)=(n,0)$.

First, we show by backward induction on $i$ that $\mathcal{Z}$ contains all cells
whose coordinate component of the state is $(i,0)$, for $0 \le i \le n$.

Assume that a cell whose coordinate component of the state is $(i,0)$, for some $i \ge 1$, belongs to $\mathcal{Z}$.
Then, by Definition~\ref{def:successor}, the successor of this cell
has coordinate component $(i-1,0)$.
Since the successor of a cell in a cycle also belongs to the cycle,
there is a cell in $\mathcal{Z}$ whose coordinate component of the state
is $(i-1,0)$.

Similarly, by induction on $j$, we show that $\mathcal{Z}$ contains all cells
whose coordinate component of the state is $(0,j)$ for $0 \le j \le m+1$.

We now prove by induction on $i \le n$ that all cells whose coordinate
component of the state is $(i,j)$ belong to $\mathcal{Z}$.

The case $i=0$ was already shown.
Assume that for some $k < n$, all cells whose coordinate component
of the state is $(i,j)$, where $i \le k$ and $j \le m+1$, belong to $\mathcal{Z}$.
We consider two cases.

\begin{itemize}
    \item If $k$ is even, then the successor of the cell whose coordinate
    component of the state is $(k,m+1)$ has coordinate component
    $(k+1,m+1)$, and hence this cell belongs to $\mathcal{Z}$.
    The proof that all cells whose coordinate component of the state
    is $(k+1,j)$ for $1 \le j \le m$ belong to $\mathcal{Z}$
    is analogous to the argument used above.

    \item If $k$ is odd, then the successor of the cell whose coordinate
    component of the state is $(k,1)$ has coordinate component
    $(k+1,1)$.
    Hence this cell belongs to $\mathcal{Z}$.
    Again, by repeatedly following successor edges, we obtain that all cells
    whose coordinate component of the state is $(k+1,j)$,
    for $1 \le j \le m$, belong to $\mathcal{Z}$.
\end{itemize}

In both cases, all cells whose coordinate component of the state
is in row $k+1$ belong to $\mathcal{Z}$, which completes the induction.

We have proved that if a cycle contains a cell whose coordinate
component is $(n,0)$, then it must contain all cells of the finite
configuration $C \in \Config_{n,m}$.

The choice of $(n,0)$ was only for convenience.
The same argument applies to any cell whose coordinate component
is $(i,j) \in [n+1]\times[m+2]$.
Starting from such a cell, the local correctness rules force the
presence of all its neighbours, and by repeating this argument,
the whole rectangle $[n+1]\times[m+2]$ must belong to the cycle.

Hence every cycle that contains at least one cell of $C$
contains all cells of $C$.
\end{proof}

An easy consequence of the previous lemma is that if a configuration contains at least one red cell, then it does not contain a cycle.

\begin{theorem}\label{thm:red_cell}
    Fix $C \in \Config_{n,m}$ and let $U = \{C' \in \Config_{n,m} \mid C' \sim C\}$. The map $A_\varphi: U \to U$ is injective if and only if $C$ contains a red cell.
\end{theorem}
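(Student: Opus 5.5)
The two directions are handled separately, and throughout we use that the colour of a cell depends only on the non-label components. Hence every $C'\in U$ has exactly the same blue and red cells as $C$, the same successor relation on blue cells, and $A_\varphi$ maps $U$ into $U$.

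\emph{($\Rightarrow$)} We argue by contraposition. Suppose $C$ contains no red cell, so $C$ is blue. Then Lemma~\ref{lm:blue_table} applies directly: the all-$0$-label and all-$1$-label versions of $C$ have the same image. So $A_\varphi$ is not injective on $U$.

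\emph{($\Leftarrow$)} Assume $C$ contains a red cell. Take distinct $C_1,C_2\in U$; we must show $A_\varphi(C_1)\neq A_\varphi(C_2)$. Since $C_1\sim C_2$ and $C_1\neq C_2$, they differ in the label of some cell $c$. We build the successor walk $c=c_1,c_2,\dots$, where $c_{i+1}$ is the successor of $c_i$ as long as $c_i$ is blue. This is well defined by Lemma~\ref{lm:exactly_one_successor}, and each successor is a non-quiescent cell of the finite part. The walk is a path in the finite successor graph, in which every blue cell has out-degree one. There are two possibilities.
\begin{itemize}
\item The walk reaches a red cell $c_t$. Then $(c_1,\dots,c_t)$ is a pointed chain, with $c_t$ its unique red cell. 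It contains $c_1=c$, whose labels in $C_1$ and $C_2$ differ. Lemma~\ref{lm:global_injective} then gives an index $\ell$ at which the labels of $A_\varphi(C_1)(c_\ell)$ and $A_\varphi(C_2)(c_\ell)$ differ.
\item The walk stays among blue cells forever. By finiteness it eventually repeats a cell, so the configuration contains a cycle of blue cells. By Lemma~\ref{lm:at_least_one_red} every cell of $C$ then lies on this cycle and is therefore blue, contradicting the existence of a red cell. So this case cannot occur.
\end{itemize}
In the first case $A_\varphi(C_1)\neq A_\varphi(C_2)$, which proves injectivity.

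\emph{Main obstacle.} The delicate step is applying Lemma~\ref{lm:global_injective} when the walk repeats cells before reaching a red cell. In that situation the sequence $(c_1,\dots,c_t)$ is not a simple chain, and the coordinate-by-coordinate back-substitution in that lemma's proof would be ambiguous.

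To handle this we argue that it cannot happen before a red cell is reached. Any repetition means the walk from $c$ enters a cycle. Because each blue cell has a unique successor, such a cycle is a genuine cycle in the sense of the definition, and Lemma~\ref{lm:at_least_one_red} excludes it since a red cell exists. Hence the cells $c_1,\dots,c_t$ are pairwise distinct, and the labels are recovered backwards from $c_t$ exactly as in Lemma~\ref{lm:global_injective}, because the red cell $c_t$ keeps its label unchanged.

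A secondary point to check is that the successor of a cell in the finite part is never the quiescent state. This follows from Lemma~\ref{lm:exactly_one_successor}.
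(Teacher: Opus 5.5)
Your proof is correct and follows essentially the same route as the paper. The forward direction is Lemma~\ref{lm:blue_table} by contraposition, and the backward direction follows successors, excludes cycles via Lemma~\ref{lm:at_least_one_red} and quiescent successors via Lemma~\ref{lm:exactly_one_successor}, and concludes with Lemma~\ref{lm:global_injective}. You differ only cosmetically: you start the walk at a cell where the labels of $C_1$ and $C_2$ differ, which also covers the paper's separate all-red case because a lone red cell is a pointed chain of length one. Your ``main obstacle'' is moot, since uniqueness of successors already forces the cells of any pointed chain to be pairwise distinct.
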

\begin{proof}
The left to right direction was proven in Lemma~\ref{lm:blue_table}. 

Assume than $C$ contains at least one red cell. There are two options: either all the cells in $C$ are red, or there is at least one blue cell. 

In the first case   $A_\varphi: U \to U$ is an identity mapping and it is trivially injective.
 
In the second case consider a blue cell $c$. We want to show that it necessarily belongs to a pointed chain. As it is blue, it has a successor. Either the successor is red and we are done, or it is blue. $C$ is finite and according to Lemma~\ref{lm:at_least_one_red},  $c$ is not in a cycle. Therefore iterating successors we reach a red cell or a quiescent cell. However, the last case contradicts the local correctness rules. Hence, $c$ belongs to a pointed chain. 
Therefore, $C$ is a  union of pointed chains. 

Assume that for $C_1,C_2 \in U$, it holds $A_\varphi (C_1) = A_\varphi (C_2)$, but $C_1 \neq C_2$. 
Then there must exist a cell that appears in both $C_1$ and $C_2$ but has different labels in the two configurations. 
Since this cell belongs to a pointed chain, this contradicts the injectivity of $A_\varphi$ on pointed chains (Lemma~\ref{lm:global_injective}). Therefore, $A_\varphi: U\to U$ is injective. 
\end{proof}

We note that the next result is a version of Durand’s theorem (see~\cite{DURAND1994387}). In simple words, even if we only look at configurations of bounded size, it is still "hard" to decide whether a given cellular automaton is injective on those configurations or not.

\begin{theorem}\label{thm:injective_not_sat}
     $A_\varphi: \Config_{n,m} \to \Config_{n,m}$ is injective if and only if $\varphi$ is not satisfiable. 
\end{theorem}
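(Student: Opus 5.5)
The plan is to reduce global injectivity to injectivity on the similarity classes, and then apply Theorem~\ref{thm:red_cell} together with Lemmas~\ref{lm:table_loc_correct} and~\ref{lm:new_lemma}.

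The first step is the observation that $A_\varphi$ rewrites only labels, so $A_\varphi(C)\sim C$ for every $C\in\Config_{n,m}$. Hence each $\sim$-class $U$ is mapped into itself. If $C_1\not\sim C_2$, then $A_\varphi(C_1)\sim C_1\not\sim C_2\sim A_\varphi(C_2)$, so the images differ. Consequently $A_\varphi$ is injective on $\Config_{n,m}$ if and only if its restriction to every class $U$ is injective. By Theorem~\ref{thm:red_cell}, this holds if and only if every configuration $C\in\Config_{n,m}$ contains a red cell, i.e.\ if and only if no blue configuration exists.

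It therefore remains to show that a blue configuration exists if and only if $\varphi$ is satisfiable.

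\emph{Satisfiable implies a blue configuration.} Suppose $\tuple a$ satisfies $\varphi$. By Lemma~\ref{lm:table_loc_correct}, $\Table_\varphi(\tuple a)$ is locally correct. By the defining equations of the table, every cell with coordinates $(n,m+1)$ (the output cell) has $\pc=\bigl(\bigwedge_{v\le n}C_v\bigr)(\tuple a)=\varphi(\tuple a)=1$. So every cell is blue.

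\emph{A blue configuration implies satisfiable.} Let $C$ be blue. Read off $\tuple a$ from the $0$th row: all cells are locally correct, so by the coordinate argument of Lemma~\ref{lm:new_lemma} (rule (A), which uses only local correctness) the cell at position $(0,j)$ has $\coord=(0,j)$, and it is the unique cell with these written coordinates. Set $a_j:=a(s)$ for its state $s$; by (E2) this value lies in $\{0,1\}$. The hypothesis of Lemma~\ref{lm:new_lemma} then holds, so $C\sim\Table_\varphi(\tuple a)$. In particular the cell at $(n,m+1)$ is an output cell with $\pc=\varphi(\tuple a)$, and since it is blue this value is $1$. Hence $\tuple a$ satisfies $\varphi$.

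Combining the steps, $A_\varphi$ is injective if and only if no blue configuration exists, if and only if $\varphi$ is unsatisfiable.

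The main obstacle is the uniqueness claim in the second direction: that each written coordinate $(0,j)$ occurs exactly once, so that "every cell with $\coord(s)=(0,j)$ has $a(s)=a_j$" is meaningful. I would obtain this from the coordinate induction in the proof of Lemma~\ref{lm:new_lemma}, which shows that in a locally correct configuration every cell carries its true position. A secondary point to check is the degenerate case of Theorem~\ref{thm:red_cell} for classes containing only red cells; this is harmless, since that case falls under "contains a red cell".
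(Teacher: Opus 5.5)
Your proof is correct and follows essentially the paper's route: you decompose into similarity classes, apply Theorem~\ref{thm:red_cell}, obtain a blue configuration from $\Table_\varphi(\tuple a)$ via Lemma~\ref{lm:table_loc_correct}, and for the converse apply Lemma~\ref{lm:new_lemma} in contrapositive form, reading $\tuple a$ off a blue configuration and checking that each written coordinate $(0,j)$ occurs only once, which is more careful than the paper's ``fix $\tuple a$ and consider $C$ with $\tuple a$ in the $0$th row''. One small correction: $a(s)\in\{0,1\}$ for the cell with coordinates $(0,j)$ follows from the definition of $S_3$ (the only states of $S_{n,m}$ carrying those coordinates), not from (E2), which only forces the other components to be $\square$.
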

\begin{proof}
We prove both implications.

($\Rightarrow$) Assume that $\varphi$ is satisfiable. Then there exists an assignment $\tuple b$ such that $\varphi(\tuple b)=1$.

By Lemma~\ref{lm:table_loc_correct}, the configuration $\Table_\varphi(\tuple b)$ is locally correct. By the definition of $\Table_\varphi$, its output cell contains the value $\varphi(\tuple b)=1$.

By Lemma~\ref{lm:blue_table}, the restriction of $A_\varphi$ to the set of configurations similar to $\Table_\varphi(\tuple b)$ is not injective. Therefore, the map
\[
A_\varphi : \Config_{n,m} \to \Config_{n,m}
\]
is not injective.

($\Leftarrow$) Now assume that $\varphi$ is not satisfiable. This means that for each assignment $\tuple a$ we have $\varphi(\tuple a) = 0$. Fix an arbitrary
$\tuple{a}$ and consider a configuration $C$ which has $\tuple a$ in the $0$th row. 

If $C$ is locally correct, then according to Lemma~\ref{lm:new_lemma} $C$ is similar to $\Table_\varphi(\tuple a)$ and therefore the value in the output cell is $\varphi(\tuple a) = 0$. If $C$ is not locally correct, this means that some cell does not satisfy local correctness rules.

In both cases according to Definition~\ref{def:blue_red} $C$ contains a red cell.  Therefore, by
Theorem~\ref{thm:red_cell}, for every configuration $C$ the restriction
\[
A: U \to U,
\quad
U = \{C' \in \Config_{n,m} \mid C' \sim C\},
\]
is injective. Moreover, for any $C'' \in \Config_{n,m}$, if $C'' \not\sim C$, letting
\[
V = \{C' \in \Config_{n,m} \mid C' \sim C''\},
\]
the sets $U$ and $V$, as well as $A_\varphi(U)$ and $A_\varphi(V)$ are disjoint. Hence
\[
A_\varphi : \Config_{n,m} \to \Config_{n,m}
\]
is injective.
\end{proof}

\section{Formalization in bounded arithmetic}
Our next goal is to formalize the proof of the if-then direction (left to right) of Theorem~\ref{thm:injective_not_sat} within the theory~$V^0$. We begin by formalizing finite configurations and cellular automata in this theory.

\subsection{Additional coding constructions}
We now explain how to code the cellular automaton $A_\varphi$ inside $V^0$ and the table $\Table_\varphi(\tuple a)$.  
All objects are finite and bounded by a polynomial in $|\varphi|$, therefore they can be represented as finite sets of numbers.

\begin{remark}
     For natural numbers $a,b$, where $a\leq b$ we use the abbreviations
\[
\forall i \in \{a, \dots, b\} \; \psi(i)
\]
is an abbreviation for the bounded formula
\[
\forall i\leq b \; (a \leq i \rightarrow \psi(i))
\]
and the notation
\[
\exists i \in \{a, \dots, b\} \; \psi(i)
\]
is an abbreviation for the bounded formula
\[
\exists i\leq b \; (a \leq i \wedge \psi(i)).
\]

\end{remark}

\subsubsection{Coding of $A_\varphi$}\label{subsub:coding_af}

The automaton $A_\varphi=(2,S_{n,m},N,f_\varphi)$ is coded by defining $S_{n,m}$ and $f_\varphi$ as bounded sets.

\textbf{Coding of $S_{n,m}$}

In the coding of $S_{n,m}$ we use the notation
\[
s \in ([n+1] \cup \{\square\}) \times ([m+2] \cup \{\square\})
\times \{-1,0,1,\square\} \times \{0,1,\square\}^3 \times \{0,1,\square\}
\]
as a shorthand.

Formally, this means that every component of a state $s$
satisfies the corresponding bounded condition. For example,
\[
i \leq n \ \text{or}\ i=\square,
\qquad
j \leq m+1\ \text{or}\ j=\square,
\]
\[
\flag \in \{-1,0,1\} \ \text{or}\ \flag=\square,
\]
and similarly for the remaining components.

\begin{lemma}
The theory $V^0$ proves that for every CNF formula $\varphi$
there exists a finite set of states $S_{n,m}$
as defined in Equation~\eqref{eq:set_of_states_bound}.
\end{lemma}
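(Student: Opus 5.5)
The plan is to obtain $S_{n,m}$ by a single application of $\Sigma^B_0\text{-COMP}$. The only real work is to encode states as numbers and to write membership in $S_{n,m}$ as a $\Sigma^B_0$ formula. First I fix a numeric encoding of the finite component domains. The symbol $\square$ will be a fixed number outside the ranges in use, say $\square := n+m+3$. The value $-1$ of $\flag$ will be shifted to $2$ (or all flag values shifted by one). A state $s=((i,j),\flag,a,\pd,\pc,\mathrm{label})$ is then coded by the iterated pairing $\langle i,\langle j,\langle \flag,\langle a,\langle \pd,\langle \pc,\mathrm{label}\rangle\rangle\rangle\rangle\rangle\rangle$ from Subsection~\ref{coding_relations_functions}. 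Every component is at most $n+m+3$, and pairing is polynomial, so all codes lie below an explicit term $t(n,m)$, for instance $(n+m+4)^{64}$. The projections have bounded graphs, so "$z$ codes a tuple with components $i,j,\dots$" is expressible by bounded number quantifiers $\exists i,j,\dots\le z$.

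Next I write the $\Sigma^B_0$ formula $\mathrm{State}(z,n,m)$ as the disjunction of seven clauses, one for each of $S_1,\dots,S_7$. Each clause says that $z$ codes a tuple whose components satisfy the defining constraints of that set. For example, the clause for $S_6$ says: $\exists i,j,f,a,p,\ell \le z$ such that $z$ equals the code of $(i,j,f,a,p,\square,\ell)$, with $1\le i\le n$, $1\le j\le m$, $f\in\{-1,0,1\}$ (encoded), and $a,p,\ell\in\{0,1\}$. The sets $S_1,\dots,S_5$ and $S_7=\{\tuple q\}$ are handled the same way; they involve only finitely many equalities with constants or with $n,m,\square$. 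The parameters $n,m$ are the ones read off from $\varphi$ via $\mathrm{CNF}(n,m,\varphi)$. The formula has no set quantifiers, and only $\varphi$ may appear as a free set variable, although it is not even needed here. So it is $\Sigma^B_0$.

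Finally I apply $\Sigma^B_0\text{-COMP}$ with bound $y:=t(n,m)$. This gives a set $X$ with $|X|\le t(n,m)$ and $\forall z<t(n,m)\,(X(z)\leftrightarrow \mathrm{State}(z,n,m))$. Since every code of a state lies below $t(n,m)$, which is provable in $V^0$ by the monotonicity of pairing from $\mathbf{B1}$--$\mathbf{B12}$ and $I\Delta_0$ facts, available via Theorem~\ref{thm:conservative_extension}, $X$ is exactly the set of codes of elements of $S_{n,m}$ in Equation~\eqref{eq:set_of_states_bound}.

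The main obstacle is bookkeeping rather than logic. It lies in the two $V^0$-provable facts that pairing is injective and monotone, so that the componentwise description matches the coded set, and that the bound $t(n,m)$ is valid. I would prove both in $I\Delta_0$ by elementary inequalities on $\langle x,y\rangle$ and then transfer them using conservativity.
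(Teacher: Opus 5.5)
Your proposal is correct and is essentially the paper's proof: you code states as tuples below a polynomial bound $t(n,m)$, express membership in $S_{n,m}$ as the seven-way disjunction of bounded conditions, and apply $\Sigma^B_0\text{-COMP}$ once, while being more explicit than the paper about coding $\square$ and $-1$ and about the pairing facts needed. One arithmetic slip: with $N:=n+m+3$ coding $\square$, the code of the quiescent state $\tuple q$ (all seven components equal to $N$) is larger than $2N^{64}$, which exceeds $(N+1)^{64}=(n+m+4)^{64}$ once $n+m\ge 89$, so comprehension with that bound would omit $\tuple q$; a bound such as $t(n,m)=(2(n+m+4))^{64}$ does work.
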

\begin{proof}
Fix a CNF formula $\varphi$ with $n$ clauses and $m$ propositional variables.
We define the set $S_{n,m}$ by a $\Sigma^B_0$-formula and then apply
the bounded comprehension axiom.

First, we fix a polynomial bound $t(n,m)$ large enough to encode all components
$(i,j,\flag,a,\pd,\pc,\mathrm{label})$ 
as a single number (using the standard coding of tuples described in
Subsection~\ref{coding_relations_functions}).  Such $t(n,m)$ exists since all the tuples are from $([n+1] \cup \{\square\}) \times([m+2]\cup \{\square\})\times\{-1, 0, 1, \square\} \times \{0,1, \square\}^3 \times \{0,1\}$, which has polynomial size in $n$, $m$ (therefore in $|\varphi|$).

We now define a bounded formula $\psi(s)$ saying that the code $s$
represents a state of one of the seven types.
Formally, for
\begin{align*}
((i,j),&\flag,a,\pd,\pc,\mathrm{label})
\in \\& ([n+1]\cup\{\square\})
\times([m+2]\cup\{\square\})
\times\{-1,0,1,\square\}
\times\{0,1,\square\}^3
\times\{0,1\},
\end{align*}

we put
\[
\psi(\varphi,(i,j),\flag,a,\pd,\pc,\mathrm{label})
\]
to be the disjunction of the following seven bounded conditions:

    \begin{enumerate}[label={(\arabic*)}]
        \item $(i, j)=(0,0) \wedge \flag =a=\pd=\pc =\square$ \hfill \text{// top-left cell}
        \item $i \in \{1, 2, \dots, n \} \wedge j=0 \wedge \flag =a=\pd=\pc =\square$ \hfill \text{// $0$th column}
        \item $i =0 \wedge j \in \{1, 2, \dots, m \}  \wedge a \in \{0, 1\} \wedge  \flag =\pd=\pc =\square$ \hfill \text{// $0$th row}
        \item $(i,j) =(0, m+1) \wedge  \flag =a=\pd=\pc =\square$ \hfill\text{// top-right cell}
        \item $i \in \{1, 2, \dots, n\} \wedge j= m+1 \wedge \mathsf{pc} \in \{0, 1\} \wedge \flag =a=\pd =\square $\hfill \text{// last column}
        \item $ (i,j) \in \{1, \dots, n\} \times \{1, \dots, m\} \wedge \mathrm{flag}\in \{-1, 0, 1\}  \wedge a \in \{0, 1\} \wedge \mathsf{pd} \in \{0, 1\} \wedge \pc =\square$\hfill \text{// main body}
        \item $i = j = \flag = a=\pd=\pc = \mathrm{label} = \square$\hfill \text{// quiescent state}
    \end{enumerate}

Each of these clauses uses only bounded number quantifiers and bounded
case distinctions over finite sets.
Hence $\psi$ is a $\Sigma^B_0$-formula.

By the bounded comprehension scheme $\Sigma^B_0\text{-COMP}$,
$V^0$ proves the existence of a set
\[
S_{n,m} \le t(n,m)
\]
such that
\[
\forall s < t(n,m) \;
\bigl( S_{n,m}(s) \leftrightarrow \psi(s) \bigr).
\]
Therefore $V^0$ proves that the set $S_{n,m}$ exists.
\end{proof}

\medskip

\textbf{Coding of $f_\varphi$}

\begin{lemma}
The theory $V^0$ proves that for every CNF formula $\varphi$
there exists a transition function $f_\varphi$
as defined in Definition~\ref{def:A_varphi}.
\end{lemma}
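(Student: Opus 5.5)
The plan mirrors the proof for $S_{n,m}$: write down a $\Sigma^B_0$-formula $\theta(\varphi, s_1,\dots,s_5,s)$ that expresses ``$f_\varphi(s_1,\dots,s_5)=s$'', and then apply $\Sigma^B_0\text{-COMP}$. We take the bound $t'(n,m)$ large enough to code any 6-tuple of state codes below $t(n,m)$; it is polynomial in $n,m$ since $|S_{n,m}|=O(nm)$. Comprehension then yields a set $f_\varphi$ whose elements are exactly the codes $\langle s_1,\dots,s_5,s\rangle$ that satisfy $S_{n,m}(s_1)\wedge\dots\wedge S_{n,m}(s_5)\wedge\theta$. The formula $\theta$ will use the set $\varphi$ (more precisely, its relation $R_\varphi$) and the set $S_{n,m}$ from the previous lemma as parameters, so free set variables occur but no set quantifiers appear.

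The core of the proof is building $\theta$ as a Boolean combination of bounded formulas, one for each ingredient of Definition~\ref{def:local_transition_function}.
\begin{enumerate}
\item $\mathrm{LC}(s_1,\dots,s_5)$ is a finite case split on the written coordinates $\coord(s_1)=(i,j)$ (interior, each boundary type, each corner). Each case lists rules (A)--(E) as equalities among components. Rule (B) becomes $R_\varphi(i,j,\flag(s_1))$, and (C1)--(D2) become propositional equalities between components.
\item $\mathrm{Blue}$ is $\mathrm{LC}\wedge(\coord(s_1)=(n,m+1)\rightarrow \pc(s_1)=1)$.
\item $\mathrm{Suc}_k$ for $k\in\{2,3,4,5\}$ says that $\coord(s_k)=\mathrm{suc}(i,j)$. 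The function $\mathrm{suc}$ is given by the explicit case distinction of Definition~\ref{def:successor}; parity of $i$ is expressed as $\exists u\le i\,(2u=i)$.
\end{enumerate}
With these, $\theta$ is
\[
\bigl(\neg\mathrm{Blue}\wedge s=s_1\bigr)\vee\bigvee_{k=2}^{5}\bigl(\mathrm{Blue}\wedge\mathrm{Suc}_k\wedge s=(\coord(s_1),\dots,\pc(s_1),\mathrm{label}(s_1)\oplus\mathrm{label}(s_k))\bigr).
\]
Projections from tuple codes have bounded graphs (Subsection~\ref{coding_relations_functions}), and $\oplus$ on $\{0,1\}$ is a finite case split. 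Hence $\theta\in\Sigma^B_0$.

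It remains to prove in $V^0$ that the set obtained is the graph of a function $S_{n,m}^5\to S_{n,m}$, i.e.\ that it satisfies totality and uniqueness. I expect this to be the main obstacle, because it requires a formal version of Lemma~\ref{lm:exactly_one_successor}: whenever $\mathrm{Blue}(s_1,\dots,s_5)$ holds, exactly one $k$ satisfies $\mathrm{Suc}_k$. My approach is to check this directly from the case split in $\mathrm{LC}$, without any induction. Rule (A) fixes $\coord(s_2),\dots,\coord(s_5)$ as four pairwise distinct values (or $\tuple q$), and $\mathrm{suc}(i,j)$ always names an in-rectangle neighbour, so exactly one $k$ fits. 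This is a bounded case analysis on $(i,j)$ using only open arithmetic facts such as $i+1\neq i-1$, all available in $2$-BASIC.

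Given that, totality follows: in the red case $s=s_1$ works, and in the blue case $s$ is built from the unique $k$. The resulting $s$ lies in $S_{n,m}$ because only the label changes and it remains in $\{0,1\}$. Uniqueness holds because the disjuncts of $\theta$ are mutually exclusive. Since all the reasoning takes place inside $V^0$ with $\Sigma^B_0$-formulas, Theorem~\ref{thm:conservative_extension} is not needed at this point.
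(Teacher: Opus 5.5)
Your proposal is correct and follows the paper's route: you define the graph of $f_\varphi$ by a $\Sigma^B_0$-formula $\theta$ built from local correctness, the output-cell condition, the successor-driven label update and the identity in the red case, and then apply $\Sigma^B_0$-COMP. Your disjunctive $\theta$ agrees with the paper's $((1)\wedge(2)\rightarrow(3))\wedge(\neg((1)\wedge(2))\rightarrow(4))$ once the successor is unique.

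Your totality and uniqueness check, via a formal version of Lemma~\ref{lm:exactly_one_successor}, is a step the paper leaves implicit. It goes through, but you need to note three things. First, the even/odd split on $i$ needs $\Sigma^B_0$-induction, which $V^0$ has but $2$-BASIC alone does not, so ``without any induction'' is not right. Second, $\mathrm{LC}$ must be false when $\coord(s_1)=(\square,\square)$; otherwise $\tuple q$ would be blue with no successor and totality would fail. Third, for odd $n$ the cell $(n,1)$ satisfies both the $\leftarrow$ and $\downarrow$ clauses of Definition~\ref{def:successor}. This is harmless only if you give $\leftarrow$ precedence there, or observe that no neighbour matches the $\downarrow$ target, because the below neighbour is forced to be $\tuple q$.
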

\begin{proof}
The proof is similar to the proof for $S_{n,m}$.
We define the graph of $f_\varphi$ by a $\Sigma^B_0$-formula
and then apply bounded comprehension.

If $t(n,m)$ is the polynomial bound used for $S_{n,m}$,
then the size of $S_{n,m}^6$ is bounded by $p(n,m) = (t(n,m))^6$, which is polynomial in $|\varphi|$.

For each tuple $(s_1,s_2,s_3,s_4,s_5,s_1') \in S_{n,m}^6$
we define:
\begin{align*}
(s_1, s_2, s_3, s_4, s_5, s_1') \in f_\varphi \iff \theta(\varphi, s_1, s_2, s_3, s_4, s_5, s_1'),
\end{align*}

where $\theta(\varphi, s_1, s_2, s_3, s_4, s_5, s_1') = \bigl((1) \wedge (2) \rightarrow (3)\bigr) \wedge \bigl(\neg((1) \wedge (2)) \rightarrow (4)\bigr)$ with (1)--(4) denoting the following formulas:

\begin{enumerate}[label={(\arabic*)}]
    \item is the conjunction of all conditions from  Definition~\ref{def:locally_correct} saying that the cell with state $s_1$ is locally correct.

For example, the index-consistency
rule (A) for $i\in\{2,\dots,n-1\}$ and $j\in\{2,\dots,m\}$ is formalized as
\begin{align*}
    \coord(s_1)=(i,j) \iff &\coord(s_2) = (i, j+1) \wedge \coord(s_3) = (i, j-1) \\& \wedge \coord(s_4) = (i+1, j) \wedge \coord(s_5) = (i-1, j).
\end{align*}

Similarly, the vertical consistency rule (C2) for $i\ge2$ is formalized as
\begin{align*}
    \coord(s_1) = (i,m+1) \rightarrow \bigl( \pc(s_1) =1 \iff \pc(s_5) = 1 \wedge \pd(s_3) = 1 \bigr).
\end{align*}

\item says that if the cell with state $s_1$ is output cell, then the value of partial conjunction in its state is $1$:
\begin{align*}
    \coord(s_1) = (n, m+1) \rightarrow \pc(s_1) = 1.
\end{align*}

\item is the local transition rule if the cell with $s_1$ is blue. All components of the state remain unchanged, except for the label. 
Formally, we require:
\begin{align*}
    \coord(s_1') &= \coord(s_1),\\
    \flag(s_1') &= \flag(s_1),\\
    a(s_1') &= a(s_1),\\
    \pd(s_1') &= \pd(s_1),\\
    \pc(s_1') &= \pc(s_1),
\end{align*}
and the label is updated by
\begin{align*}
    \forall s \in \{s_2, s_3, s_4, s_5 \} \; \bigl(\coord(s)= & \mathrm{suc}(\coord(s_1)) \\& \rightarrow \mathrm{label}(s_1') 
    = \mathrm{label}(s_1) \oplus \mathrm{label}(s)\bigr),
\end{align*}

\item says that  the state does not change, i.e. it is the local transition rule if the cell with $s_1$ is red:
\[
s_1' = s_1.
\]
\end{enumerate}

By the bounded comprehension scheme $\Sigma^B_0\text{-COMP}$,
the theory $V^0$ proves the existence of a set
\[
f_\varphi \le p(n,m)
\]
such that
\[
\forall s < p(n,m)\;
\bigl(f_\varphi(s) \leftrightarrow \theta(\varphi, s)\bigr),
\]
where $\theta(\varphi, s)$ is a $\Sigma^B_0$-formula.
\end{proof}

Therefore both $S_{n,m}$ and $f_\varphi$ are $\Sigma^B_0$-definable, and hence $A_\varphi$ exists in $V^0$.

\subsubsection{Coding of $\Table_\varphi(\tuple a)$}\label{subsub:coding_table_af}

Recall that a configuration is a function $C : \mathbb{Z}^2 \to S_{n,m}$. Since we work only with bounded configurations inside $[n+1]\times[m+2]$, the configuration $C \in \Config_{n,m}$ can be represented by its graph:
\[
C \subseteq [n+1]\times[m+2]\times S_{n,m}.
\]

Recall that a configuration $C \in \Config_{n,m}$ is equal to
$\Table_\varphi(\tuple a)$ if and only if the following two conditions hold:
\begin{enumerate}
    \item $C$ is correct according to Equation~\ref{eq: definition_table},
    \item the labels in each cell's state are equal to $0$.
\end{enumerate}

These conditions can be formalized as a $\Sigma^B_0$-formula, 
allowing us to define $\Table_\varphi(\tuple a)$ within $V^0$.

\begin{lemma}\label{lm:V_0_table}
The theory $V^0$ proves that for every CNF formula $\varphi$ with $m$ propositional variables and for every assignment $\tuple a \in \{0,1\}^m$, there exists $\Table_\varphi(\tuple a)$.
\end{lemma}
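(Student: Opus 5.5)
The plan is to follow the pattern already used for $S_{n,m}$ and $f_\varphi$. I will write down a single $\Sigma^B_0$-formula $\tau(\varphi,\tuple a,i,j,s)$ that holds exactly when $s$ is the intended state of cell $(i,j)$ in $\Table_\varphi(\tuple a)$. Bounded comprehension will then give a set $T\le t'(n,m)$ with $T(\langle i,j,s\rangle)\leftrightarrow \tau(\varphi,\tuple a,i,j,s)$ for all codes below a polynomial bound $t'(n,m)$. I expect $t'(n,m)=\langle n,m+1,t(n,m)\rangle+1$, where $t(n,m)$ is the bound for state codes from the lemma on $S_{n,m}$. Finally I will check in $V^0$ that $T$ is the graph of a function $[n+1]\times[m+2]\to S_{n,m}$ (uniqueness and totality as in Subsection~\ref{coding_relations_functions}) and that it satisfies conditions 1 and 2 characterizing $\Table_\varphi(\tuple a)$.

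The formula $\tau$ is a conjunction, following Equation~\eqref{eq: definition_table} clause by clause. It requires $\coord(s)=(i,j)$ and $\mathrm{label}(s)=0$. For $(i,j)$ in the main body it requires $\varphi(i,j,\flag(s))$, so the flag is read directly from the relation $R_\varphi$. It requires $a(s)=a_j$ for $1\le j\le m$. Every component not used at that position must be $\square$. The only non-atomic entries are $\pd$ and $\pc$:
\begin{align*}
\pd(s)=1 &\leftrightarrow \exists u\le j\;\exists e\in\{-1,1\}\;\bigl(u\ge1\wedge\varphi(i,u,e)\wedge[(e=1\wedge a_u=1)\vee(e=-1\wedge a_u=0)]\bigr),\\
\pc(s)=1 &\leftrightarrow \forall v\le i\;\bigl(v\ge1\rightarrow \exists u\le m\;\exists e\in\{-1,1\}\;(\varphi(v,u,e)\wedge\ldots)\bigr).
\end{align*}
In words, $\pc(s)=1$ says that each clause $C_v$ with $v\le i$ satisfies the $\mathrm{Sat}$ matrix from Subsection~\ref{subsub:coding_cnfs}. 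These clauses are explicit bounded quantifications, not recursive definitions. So $\tau$ is $\Sigma^B_0$ with parameters $\varphi$ and $\tuple a$, and no induction is needed to define the table.

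The remaining verification is routine in $V^0$. For totality, given $(i,j)$ I take the tuple whose components are prescribed by $\tau$; each component is a $\Sigma^B_0$-definable truth value or $\square$. This tuple lies in $S_{n,m}$ because $\tau$ puts it into exactly one of the seven cases of $\psi$. Uniqueness holds because every component is determined by $\tau$, and the tuple coding is injective, so two such states are equal. Condition 2 is immediate from $\mathrm{label}(s)=0$.

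I expect the main obstacle to be condition 1. There the table must match the description of correct computation in Equation~\eqref{eq: definition_table}, and the values $\bigl(\bigvee_{u\le j}C_{i,u}\bigr)(\tuple a)$ and $\bigl(\bigwedge_{v\le i}C_v\bigr)(\tuple a)$ appear there only informally. My approach is to take the explicit bounded formulas above as the formalization of these values, so condition 1 holds by construction. If a recursive formulation is wanted later (as in rules (C2) and (D1)), the needed equivalences, such as $\exists u\le j+1(\ldots)\leftrightarrow \exists u\le j(\ldots)\vee(\ldots)_{u=j+1}$, are provable by pure logic plus B11. No induction is required, so everything stays within $V^0$.
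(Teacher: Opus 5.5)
Your proposal is correct and follows essentially the same route as the paper: one $\Sigma^B_0$-formula that fixes the coordinates, reads the flag from $R_\varphi$, sets $a(s)=a_j$, gives $\pd$ and $\pc$ by explicit bounded quantifications rather than recursion, imposes the $\square$-conditions and label $0$, and then applies $\Sigma^B_0\text{-COMP}$ with a polynomial bound. Your extra check that the resulting set is the graph of a function into $S_{n,m}$ goes beyond what the paper writes out; for the uniqueness part, make sure $\tau$ includes $s\in S_{n,m}$ as a conjunct (the paper gets this by letting $s$ range over $S_{n,m}$), so that $\pd(s),\pc(s)\in\{0,1\}$ and the biconditionals really determine these components.
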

\begin{proof}
    
For each $(i,j,s) \in [n+1] \times [m+2] \times S_{n,m}$ we define
\[
 \bigl((i, j, s) \in \Table_\varphi(\tuple a) \iff \Psi(\varphi, \tuple a, i,j, s) \bigr),
\]
where $\Psi(\varphi, \tuple a, i,j, s) = (a) \wedge (b) \wedge (c)$, where
\begin{enumerate}[label=(\alph*)]

\item is a conjunction of conditions (A)--(E) saying that $(i, j, s)$ is correct.
\begin{enumerate}[label=(\Alph*)]
    \item true coordinates of the cell should be equal to the ones in its state
    \begin{align*}
    \coord(s)=(i,j),        
    \end{align*}

    \item the flags written in the cells of the main body of $C$ indeed encode $\varphi$:
    \begin{align*}
    (i,j) \in \{1,\dots,n\} \times \{1,\dots,m\}\; \forall k \in&\{-1, 0, 1\} 
     \rightarrow \\& 
    (\flag(s)=k \;\leftrightarrow\; (i,j,k)\in\varphi),
    \end{align*}
    
    \item partial disjunctions have correct values
    \begin{align*}
    (i,j) \in & \{1,2 \dots, n\} \times \{1, 2 \dots, m\} \rightarrow \bigr(\pd(s) =1 \leftrightarrow \exists \tilde{j}\in\{1, 2, \dots, j\} \\& [(i, \tilde{j}, 1) \in \varphi \wedge a_{\tilde{j}}=1] \vee [(i, \tilde{j}, -1) \in \varphi \wedge a_{\tilde{j}}=0] \bigl),        
    \end{align*}

        \item partial conjunctions have correct values
    \begin{align*}
    (i,j) & \in \{1,2 \dots, n\} \times \{m+1\} \rightarrow   \bigr(\pc(s) =1 \leftrightarrow \forall \tilde{i} \in \{1, 2, \dots, i\} \; \\& \exists \tilde{j}\in\{1, 2, \dots, m\}  [(\tilde{i}, \tilde{j}, 1) \in \varphi \wedge a_{\tilde{j}}=1] \vee [(\tilde{i}, \tilde{j}, -1) \in \varphi \wedge a_{\tilde{j}}=0]\bigl),        
    \end{align*}

    \item a conjunction of technical conditions (i)--(iv)

    \begin{enumerate}
        \item $\bigl(j =0 \vee (i,j)=(0, m+1)\bigr) \rightarrow \flag(s) = a(s) = \pd(s) = \pc(s) = \square$
        \item $\bigl(i =0 \wedge j \in\{1, 2, \dots, m\}\bigr) \rightarrow \flag(s) = \pd(s) = \pc(s) = \square$
        \item $\bigl(i \in \{1, 2 \dots, n\} \wedge j=m+1\bigr) \rightarrow \flag(s) = a(s) = \pd(s) = \square$
        \item $\bigl(i \in \{1, 2, \dots, n\} \wedge j \in\{1,2, \dots m\}\bigr) \rightarrow \pc(s) = \square$
    \end{enumerate}
\end{enumerate}

    \item is the condition saying that the assignment is indeed $\tuple a = (a_1, a_2, \dots, a_m)$, i.e. the cells in positions $(i,j)$ for $i \in [n+1], j \in \{1, 2, \dots, m\}$ should contain values $a_1, a_2, \dots, a_m$: 
    \[
     (i,j) \in [n+1]\times \{1, 2, \dots, m\} \rightarrow a(s) = a_j,
    \]
    
    \item is the condition that labels in all cells in the finite part of $C$ are $0$ (without this condition the formula would encode any configuration extending $\Table_\varphi(\tuple a)$):
    \[
    \mathrm{label}(s) = 0. 
    \]
\end{enumerate}

Here the bound for the size of $\Table_\varphi(\tuple a)$ is $O(n m t(n,m))$, which is again polynomial in $|\varphi|$.  

By the bounded comprehension scheme $\Sigma^B_0\text{-COMP}$, the theory $V^0$ proves the existence of $\Table_\varphi(\tuple a)$.
\end{proof}

\begin{theorem}\label{thm:V0_injective_unsat}
    $V^0$ proves that if $A_\varphi$ is injective on $\Config_{n,m}$, then $\varphi$ is not satisfiable.
\end{theorem}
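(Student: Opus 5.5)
We prove the contrapositive inside $V^0$: assuming $\mathrm{CNF}(n,m,\varphi)$ and $\mathrm{Sat}(n,m,\tuple a,\varphi)$ for some assignment $\tuple a$, we exhibit two distinct configurations $C_1 \neq C_2$ in $\Config_{n,m}$ with $A_\varphi(C_1) = A_\varphi(C_2)$. This mirrors the ($\Rightarrow$) part of Theorem~\ref{thm:injective_not_sat}. That part only uses Lemma~\ref{lm:table_loc_correct} and Lemma~\ref{lm:blue_table}, and avoids the inductive Lemma~\ref{lm:new_lemma} and the chain/cycle arguments, which would need $\Sigma^B_0$-induction over long successor paths. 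Non-injectivity of $A_\varphi$ on $\Config_{n,m}$ is then a $\Sigma^B_1$ statement (two configuration sets and their images). We will simply produce all four sets by $\Sigma^B_0$-comprehension.

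\textbf{Construction.} First, by Lemma~\ref{lm:V_0_table}, we obtain $C_1 := \Table_\varphi(\tuple a)$ in $V^0$. Next we define $C_2$ by the same $\Sigma^B_0$ formula $\Psi$, except that clause (c) is replaced by $\mathrm{label}(s)=1$. Comprehension applies again, and $C_1 \neq C_2$ is witnessed by the cell $(0,0)$. We must check that both are graphs of functions on $[n+1]\times[m+2]$ with values in $S_{n,m}$. Totality and uniqueness hold because conditions (a)--(b) fix every non-label component uniquely as an explicit bounded formula of $(i,j,\varphi,\tuple a)$; this is a direct $\Sigma^B_0$ verification with no induction.

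\textbf{Local correctness and blueness.} The heart of the argument is formalizing Lemma~\ref{lm:table_loc_correct}: every cell of $C_1$ (and $C_2$) is locally correct. The key observation is that the values of $\pd$ and $\pc$ in $\Psi$ are given by \emph{closed-form} bounded formulas (an $\exists \tilde j \le j$, respectively a $\forall \tilde i\le i\,\exists\tilde j\le m$), not by recursion. Each rule (D1), (D2), (C2), (C3) therefore reduces to a pure logical identity. For example, $\exists \tilde j \le j\,\theta(\tilde j) \leftrightarrow (\exists \tilde j \le j-1\,\theta(\tilde j)) \vee \theta(j)$ follows from the basic axioms about $\le$ (B10, B11); the analogous identity handles the conjunction. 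Rules (A), (B), (C1) and (E) hold directly from the definition. For blueness we also need the output cell $(n,m+1)$ to satisfy $\pc = 1$. Its $\pc$ is defined as $\forall \tilde i\le n\,\exists \tilde j \ldots$, which is literally $\mathrm{Sat}(n,m,\tuple a,\varphi)$. Hence every cell of $C_1$ and $C_2$ is blue, and by Lemma~\ref{lm:exactly_one_successor} (a $\Sigma^B_0$ fact following from rule (A)) every cell has a unique successor.

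\textbf{Equal images.} We form $D_k := A_\varphi(C_k)$ by comprehension from $f_\varphi$ and $C_k$ (a $\Sigma^B_0$ definition). Each cell is blue, so its new label is $\mathrm{label}\oplus\mathrm{label}(\text{successor})$, which equals $0\oplus 0 = 0$ in $C_1$ and $1\oplus 1 = 0$ in $C_2$. The other components are unchanged, and $C_1 \sim C_2$. Thus $D_1$ and $D_2$ agree pointwise, and axiom SE gives $D_1 = D_2$, contradicting injectivity.

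I expect the main obstacle to be the blueness step. One must make sure that the $\Sigma^B_0$ formula $\theta$ defining $f_\varphi$ evaluates to the blue branch on the states of $C_1$. This requires unfolding all boundary cases of rule (A), including the quiescent neighbours outside the rectangle, and matching them exactly to the definition of $\Table_\varphi(\tuple a)$. It is routine but tedious case analysis, and it is where the closed-form description of $\pd$ and $\pc$ is essential to avoid induction.
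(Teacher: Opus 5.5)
Your proposal is correct and follows essentially the same route as the paper: build $\Table_\varphi(\tuple a)$ and its all-ones-label twin by $\Sigma^B_0$-comprehension, formalize Lemma~\ref{lm:table_loc_correct} through the bounded identities for $\pd$ and $\pc$, and observe that both configurations are sent by $A_\varphi$ to the all-zero-label table. You are somewhat more explicit than the paper on two points it leaves implicit. The first is that the output cell's $\pc$-value is equivalent to $\mathrm{Sat}(n,m,\tuple a,\varphi)$, which is needed for blueness and not merely for local correctness; the second is that the comprehension-defined sets are genuinely graphs of functions, which uses $\mathrm{CNF}(n,m,\varphi)$.
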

\begin{proof}

Assume that $\varphi$ is satisfiable.
Then there exists $\tuple a \in \{0,1\}^m$ such that $\varphi(\tuple a)=1$. The predicate $\mathrm{Sat}(n,m,\tuple a,\varphi)$ is a $\Delta_0$-formula
(see Subsection~\ref{subsub:coding_cnfs}),
hence this assumption is expressible in $V^0$.

    We have shown in Subsections~\ref{subsub:coding_af} and \ref{subsub:coding_table_af} that $V^0$ proves the existence of $f_\varphi$ and $\Table_\varphi(\tuple a)$. Similarly to $\Sigma^B_0$-definition of  $\Table_\varphi(\tuple a)$ in Lemma~\ref{lm:V_0_table}, we can define by $\Sigma^B_0$-formula a configuration $C\sim \Table_\varphi(\tuple a)$ such that the label of each cell's state is $1$. Therefore $V^0$ also proves the existence of such $C$.

In Lemma~\ref{lm:table_loc_correct} we proved that $\Table_\varphi(\tuple a)$ is locally correct. It remains to notice that this proof can be formalized in $V^0$: indeed, all arguments use only bounded quantification over $i\le n$ and $j\le m$ and simple identities defining $\pd$ and $\pc$ from neighbouring cells. The equalities
\[
\bigvee_{u\le j} C_{i,u}
=
\bigvee_{u\le j-1} C_{i,u} \;\vee\; C_{i,j}
\quad\text{and}\quad
\bigwedge_{v\le i} C_v
=
\bigwedge_{v\le i-1} C_v \;\wedge\; C_i
\]
are straightforward bounded calculations. Hence the statement that every cell of $\Table_\varphi(\tuple a)$ satisfies the local correctness rules is expressible by a $\Sigma^B_0$-formula and is provable in $V^0$.

    By definition of $f_\varphi$, the automaton never changes
any component of a state except possibly the label.
Hence it is enough to compare the labels after one step.

Let $c_1$ be any cell of $\Table_\varphi(\tuple a)$
with state $s_1$, and let $s_2,s_3,s_4,s_5$
be the states of its von Neumann neighbours.
Let $s$ be such that
\[
(s_1,s_2,s_3,s_4,s_5,s) \in f_\varphi.
\]

Since $\Table_\varphi(\tuple a)$ is locally correct,
the update rule for labels applies in the ``correct'' case.
By definition of $f_\varphi$,
\[
\mathrm{label}(s)
=
\mathrm{label}(s_1)
\oplus
\mathrm{label}(s_k)
\]
for some $k \in \{2,3,4,5\}$.

In $\Table_\varphi(\tuple a)$ all labels are $0$,
hence
\[
\mathrm{label}(s)=0\oplus 0=0.
\]
Thus
\[
A_\varphi\bigl(\Table_\varphi(\tuple a)\bigr)
=
\Table_\varphi(\tuple a).
\]

    By an analogous argument, we can show that the labels of all cells in image of $C$ are $0$.

    Therefore images of $\Table_\varphi(\tuple a)$ and $C$ are equal (to $\Table_\varphi(\tuple a)$), and thus $A_\varphi$ is not injective.
\end{proof}

\section{Inverse automata}\label{sec:inverse_automata}

We have a CNF formula $\varphi$ which determines parameters $n$ and $m$ and the automaton $A_\varphi =(2, S_{n,m}, N, f_\varphi)$ we defined in the previous sections.  
For simplicity, in this section we abbreviate the notation as follows:
\[
\Config := \Config_{n,m}, 
\qquad 
A := A_\varphi,
\qquad
S := S_{n,m}, 
\qquad 
f := f_\varphi.
\]

Assume that $B$ is a cellular automaton defined by
\[
B = (2, M, S, g),
\]
where $M$ is a neighbourhood of size $\mu \ge 1$, and 
\[
g : S^\mu \to S
\]
is the transition function. Again we study the action of $B$ on $\Config$ only.

For a cell $c \in [n+1] \times [m+2]$, we denote by $N(c)$ the tuple of
cells in the von Neumann neighbourhood of $c$, in the same order as in
the definition of the neighbourhood. That is,
\[
N(c) = (c,\, c+(0,1),\, c+(0,-1),\, c+(1,0),\, c+(-1,0)).
\]
Similarly, $M(c)$ denotes the tuple of cells in the $M$-neighbourhood of $c$. 

When we apply set operations to these neighbourhoods, we implicitly
identify the tuple with the set of its elements. In particular, we define
\[
N(M(c)) \; := \; \bigcup_{d \in M(c)} N(d).
\]

Hence we may consider the composition
\[
B \circ A : \Config \to \Config , \qquad C \mapsto B(A(C)).
\]

Our goal is to express by a $\Sigma^B_0$-formula that $B$ is an inverse of $A$.
More precisely, we want to find a $\Sigma^B_0$ formula equivalent to the statement that for every configuration
$C \in \Config$ we have
\[
B(A(C)) = C .
\]

\begin{lemma}
Let $c \in [n+1] \times [m+2]$ and let $C \in \Config$. The state of $c$ in the
configuration $B(A(C))$ depends only on the states of the cells from
$N(M(c))$ in $C$.
\end{lemma}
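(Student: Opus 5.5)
This is a locality argument obtained by unfolding the two one-step updates. For any cellular automaton acting on a configuration, the new state of a cell depends only on the old states of the cells in its neighbourhood. So I will apply this fact twice: once to $B$ and then to $A$ at each cell $B$ looks at.

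\emph{Step 1: the outer step $B$.} By the definition of a cellular automaton, the state of $c$ in $B(A(C))$ is
\[
B(A(C))(c) \;=\; g\bigl(A(C)(d_1),\dots,A(C)(d_\mu)\bigr),
\qquad (d_1,\dots,d_\mu)=M(c).
\]
It is therefore a function, namely $g$, of the states $A(C)(d)$ for $d\in M(c)$.

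\emph{Step 2: the inner step $A$.} For each such $d$, again by definition,
\[
A(C)(d) \;=\; f\bigl(C(d),\,C(d+(0,1)),\,C(d+(0,-1)),\,C(d+(1,0)),\,C(d+(-1,0))\bigr).
\]
So $A(C)(d)$ is a function of the restriction $C|_{N(d)}$.

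\emph{Step 3: combine.} Suppose $C,C'\in\Config$ agree on $N(M(c))=\bigcup_{d\in M(c)}N(d)$. Then they agree on $N(d)$ for every $d\in M(c)$. By Step 2, $A(C)(d)=A(C')(d)$ for all $d\in M(c)$. By Step 1, $B(A(C))(c)=B(A(C'))(c)$. This is exactly the claim.

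One point needs care. We are viewing $A$ and $B$ as maps on $\Config$, but cells near the boundary of $[n+1]\times[m+2]$ have neighbours outside the rectangle, and $M$ may be large. I will treat configurations as total maps $\mathbb{Z}^2\to S$ that are quiescent outside the rectangle. The displayed formulas are then literally the global-map definitions. Any cell of $N(M(c))$ lying outside the rectangle has state $\tuple q$ in both $C$ and $C'$, so the agreement hypothesis is automatically consistent there.

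The only thing I would flag is whether $A(C)$ or $B(A(C))$ might fail to remain in $\Config$. Neither matters for this locality statement, since the formulas above hold for the global maps on all of $S^{\mathbb{Z}^2}$ regardless. I do not expect any genuine obstacle: the proof is a two-line unfolding of the definitions. The main thing is to state the ordering convention of $N(c)$ and $M(c)$ consistently, so that $g$ and $f$ receive their arguments in the right order.
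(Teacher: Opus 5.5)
Your proposal is correct and matches the paper's proof. Both arguments unfold the outer step $B$ as $g$ applied to the states of the cells of $M(c)=(c_1,\dots,c_\mu)$, then the inner step $A$ as $f$ applied to the states in each $N(c_i)$, and read off that the result depends only on $N(M(c))=\bigcup_i N(c_i)$. Your restatement as ``configurations agreeing on $N(M(c))$ give the same state at $c$'' and your remark about quiescent cells outside the rectangle are harmless refinements of the same argument.
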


\begin{proof}
    Let $s$ be the state of $c$ in $B(A(C))$. By definition, 
    \[
    s = g(s_1, s_2, \dots, s_\mu),
    \]
   where $s_1, s_2, \dots, s_\mu$ are the respective states of the cells
    $c_1, c_2, \dots, c_\mu$ in $A(C)$, where $M(c) = (c_1, c_2, \dots, c_\mu)$.
    
    For each $i \in \{1, 2, \dots, \mu\}$, we have
    \[
    s_i = f(s^i_1, s^i_2, s^i_3, s^i_4, s^i_5),
    \]
    where $s^i_1, s^i_2, s^i_3, s^i_4, s^i_5$ are the states of von Neumann neighbours of $c_i$ in $C$. 
    
    Substituting these expressions into the definition of $s$ gives
    \[
    s = g\bigl(f(s^1_1, s^1_2, s^1_3, s^1_4, s^1_5), f(s^2_1, s^2_2, s^2_3, s^2_4, s^2_5), \dots, f(s^\mu_1, s^\mu_2, s^\mu_3, s^\mu_4, s^\mu_5)\bigr).
    \]

Thus $s$ depends only on the states of the neighbours of the cells
$c_1,\dots,c_\mu$ in $C$. Since the von Neumann neighbours of $c_i$ are exactly
$N(c_i)$, these cells form the set
\[
N(c_1) \cup N(c_2) \cup \dots \cup N(c_\mu) = N(M(c)).
\]
\end{proof}

\begin{lemma}\label{lm:invertible_condition}
    Let $n,m, S, \Config, A, B$ be as above. The following statements are equivalent:
    \begin{enumerate}[label=(\roman*)]
        \item $B$ is the inverse of $A$ on $\Config$, i.e.
    \[
    \forall C \in \Config \; B(A(C)) = C .
    \]

        \item For every cell $c \in [n+1]\times[m+2]$, the following holds.
Let $M(c) = (c_1,c_2,\dots,c_\mu)$. For each $i \in \{1,2,\dots,\mu\}$ let
$s^i_1,s^i_2,s^i_3,s^i_4,s^i_5 \in S$ be arbitrary states of the cells in
$N(c_i)$, where $s^1_1$ is the state of $c$.
Then
\[
s^1_1 =
g\bigl(
f(s^1_1,s^1_2,s^1_3,s^1_4,s^1_5),
f(s^2_1,s^2_2,s^2_3,s^2_4,s^2_5),
\dots,
f(s^\mu_1,s^\mu_2,s^\mu_3,s^\mu_4,s^\mu_5)
\bigr).
\]
    \end{enumerate}
\end{lemma}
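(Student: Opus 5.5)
The plan is to prove the two implications separately, both resting on the preceding lemma: the state of $c$ in $B(A(C))$ is exactly the nested expression $g\bigl(f(\dots),\dots,f(\dots)\bigr)$ evaluated on the states of the cells of $N(M(c))$ in $C$. Condition (ii) should be read with the convention that the states $s^i_k$ are those of actual cells. So whenever two of the listed positions coincide (for instance $c$ itself lies in several $N(c_i)$, and $s^1_1$ is the state of $c$), they carry the same state. Likewise, positions outside $[n+1]\times[m+2]$ carry the quiescent state $\tuple q$. Under this reading, an assignment of states to $N(M(c))$ is precisely a consistent choice of the tuple $(s^i_k)$.

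\textbf{(ii) $\Rightarrow$ (i).} Fix $C\in\Config$ and a cell $c$. Let $s^i_k$ be the actual states in $C$ of the cells in $N(c_i)$; these form an admissible choice. By the preceding lemma, the state of $c$ in $B(A(C))$ equals the nested expression, and by (ii) this equals $s^1_1=C(c)$. Since $c$ was arbitrary, $B(A(C))=C$ on the rectangle. For cells outside the rectangle we need that $B(A(C))$ is again quiescent there. I would handle this by noting that both sides are viewed as elements of $\Config$ (the paper regards $A$ and $B$ as maps $\Config\to\Config$), so only cells in $[n+1]\times[m+2]$ matter.

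\textbf{(i) $\Rightarrow$ (ii).} Fix $c$ and an admissible choice of states on $N(M(c))$. We must realize this choice by some configuration $C\in\Config$. Define $C$ to agree with the prescribed states on $N(M(c))\cap([n+1]\times[m+2])$. Fill the remaining cells of the rectangle with arbitrary non-quiescent states from $S_{n,m}$, e.g.\ label-$0$ states of the appropriate type. Outside the rectangle, set every cell to $\tuple q$. Then $C\in\Config$, and by the preceding lemma the state of $c$ in $B(A(C))$ is the nested expression evaluated at the $s^i_k$. By (i) this equals $C(c)=s^1_1$, which is the identity claimed in (ii).

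The main obstacle is the realizability step in (i) $\Rightarrow$ (ii). Arbitrary tuples $(s^i_k)$ need not come from a configuration: overlapping positions might be assigned different states, or a cell of the rectangle might be assigned $\tuple q$, which $\Config$ forbids. I would resolve this by making explicit the quantifier range in (ii): only tuples consistent with some element of $\Config$ on $N(M(c))$. This range is exactly what the construction above realizes, and it is also the natural reading for the later $\Sigma^B_0$ formalization. With the range fixed this way, both directions reduce directly to the preceding lemma.
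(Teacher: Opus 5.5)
Your proposal is correct and is essentially the paper's proof: both directions are read off directly from the preceding lemma, which expresses the state of $c$ in $B(A(C))$ as the nested term $g\bigl(f(\dots),\dots,f(\dots)\bigr)$ evaluated on the states of $N(M(c))$ in $C$. You make two conventions explicit: (ii) quantifies only over tuples realized by some $C\in\Config$, and (i) compares only cells of $[n+1]\times[m+2]$. The paper uses both tacitly (its (i)$\Rightarrow$(ii) argument simply starts from a configuration $C$), so your version is, if anything, the more careful one.
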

\begin{proof}
Assume first that $B$ is the inverse of $A$ on $\Config$. Let $C \in \Config$
and let $c$ be a cell of $C$ with state $s^1_1$. Since $B(A(C))=C$, the state
of $c$ in $B(A(C))$ is also $s^1_1$.

By the previous lemma, the state of $c$ in $B(A(C))$ is
\[
g\bigl(
f(s^1_1,s^1_2,s^1_3,s^1_4,s^1_5),
f(s^2_1,s^2_2,s^2_3,s^2_4,s^2_5),
\dots,
f(s^\mu_1,s^\mu_2,s^\mu_3,s^\mu_4,s^\mu_5)
\bigr),
\]
where for each $i \in \{1,2,\dots,\mu\}$ the values
$s^i_1,s^i_2,s^i_3,s^i_4,s^i_5$ are the states of the cells in $N(c_i)$ in $C$
and $M(c)=(c_1,c_2,\dots,c_\mu)$. Since this state equals the state of $c$ in $C$,
we obtain the required equality.

The converse direction follows immediately from the same expression for the
state of $c$ in $B(A(C))$ given by the previous lemma.
\end{proof}

\subsection{Coding states in neighbourhood $M$}
We already introduced the coding of finite sequences in Subsection~\ref{coding_relations_functions}.  Using this coding we can encode
the neighbourhood $N(c)$ of a cell $c$. However, the same method cannot be
used directly for the neighbourhood $M(c)$, since the size of $M(c)$ is not some fixed constant, but may vary with $M$.
Therefore we need a way to encode sequences whose length is not fixed. 

This is well-known in bounded arithmetic, cf. \cite[Lemma~9.3.2]{krajicek2019}. Informally the idea of such a coding can be explained as follows.

Assume we have a sequence
\[
(s_1,s_2,\dots,s_\mu) \in S^\mu .
\]
We encode it by a number $s$ whose bit in position $\langle i,j\rangle$ is the $i$-th bit of $s_j$. This can be done with $s$ of length 
less than $10 \mu \log S$, i.e.:
\[
s  < |S|^{10\mu}.
\]

The difficulty is that in $V^0$ we cannot directly use the bound
$|S|^{10\mu}$ when $\mu$ is a variable. To avoid this problem we introduce an
additional parameter $t$ that serves as an upper bound for all such codes.

We define a $\Sigma^B_0$ formula
\[
\Inv(x,y,A,B,t)
\]
to express that the automaton is locally invertible with respect to this
bound. Formally,
\[
\Inv(x,y,A,B,t) \;:=\;
|S|^{10\mu} < t \;\wedge\; (ii),
\]
where $(ii)$ is the condition from Lemma~\ref{lm:invertible_condition}, written using the coding above.

\medskip
\noindent\fbox{%
\parbox{0.95\linewidth}{
\textbf{Condition (ii).} 
Condition from Lemma~\ref{lm:invertible_condition}, modified for $V^0$.

For all numbers $s_1,s_2,s_3,s_4,s_5 < t$ satisfying
\begin{align*}
s_j &= (s^1_j,s^2_j,\dots,s^\mu_j) \qquad \text{for } j=1,\dots,5,\\
s^1_1 &\text{ is the state of } c \text{ in } C,\\
(s^1_1,s^2_1,\dots,s^\mu_1) &\text{ are the states of the cells in } M(c),\\
(s^i_1,s^i_2,s^i_3,s^i_4,s^i_5) &\text{ are the states of the cells in }
N(c_i)
\end{align*}
for every $i \in \{1,\dots,\mu\}$, where
\[
M(c)=(c_1,c_2,\dots,c_\mu),
\]
the following equality holds:
\[
s^1_1 =
g\bigl(
f(s^1_1,s^1_2,s^1_3,s^1_4,s^1_5),
f(s^2_1,s^2_2,s^2_3,s^2_4,s^2_5),
\dots,
f(s^\mu_1,s^\mu_2,s^\mu_3,s^\mu_4,s^\mu_5)
\bigr).
\]
}
}
\medskip

\subsection{Invertible automata are injective in $V^0$}

\begin{lemma}\label{lm:invertible_implies_injective}
\[
V^0 \vdash
\Inv(x,y,A,B,t)
\;\rightarrow\;
\bigl[
\forall C,C' \in \Config\;
(A(C)=A(C') \rightarrow C=C')
\bigr].
\]
\end{lemma}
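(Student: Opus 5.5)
We argue inside $V^0$. Assume $\Inv(x,y,A,B,t)$, and let $C,C'\in\Config$ satisfy $A(C)=A(C')$. By the extensionality axiom $\mathbf{SE}$, to conclude $C=C'$ it suffices to show that for every cell $c\in[n+1]\times[m+2]$ the state of $c$ in $C$ equals the state of $c$ in $C'$. This is a $\Sigma^B_0$ statement with a single bounded quantifier over cells, so no induction is needed: we fix an arbitrary cell $c$ and argue pointwise.

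Fix $c$ and let $M(c)=(c_1,\dots,c_\mu)$. We form the array of states $s^i_k$ of the cells in $N(c_i)$ in $C$, and the analogous array $r^i_k$ in $C'$. Using $\Sigma^B_0$-COMP we pack them, via the sequence coding of \cite[Lemma~9.3.2]{krajicek2019}, into numbers $s_1,\dots,s_5$ and $r_1,\dots,r_5$. The bit of the code at position $\langle i,j\rangle$ is the $i$-th bit of the $j$-th entry, and this is $\Sigma^B_0$-definable from $C$ and the neighbourhood data. The conjunct $|S|^{10\mu}<t$ of $\Inv$ guarantees that all these codes are $<t$. Hence condition (ii) can be instantiated on both tuples, giving
\[
s^1_1=g\bigl(f(s^1_1,\dots,s^1_5),\dots,f(s^\mu_1,\dots,s^\mu_5)\bigr),
\qquad
r^1_1=g\bigl(f(r^1_1,\dots,r^1_5),\dots,f(r^\mu_1,\dots,r^\mu_5)\bigr).
\]

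The assumption $A(C)=A(C')$ says exactly that for each $i\le\mu$ the state of $c_i$ in $A(C)$, namely $f(s^i_1,\dots,s^i_5)$, equals its state in $A(C')$, namely $f(r^i_1,\dots,r^i_5)$. Since $f$ and $g$ are given by their graphs, which satisfy uniqueness, equal argument tuples to $g$ yield equal values. Therefore $s^1_1=r^1_1$, which is precisely the claim for $c$.

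The main obstacle is purely a matter of formalization. We must check that building the codes of the variable-length tuples, and reading off the arguments of $g$ as the images under $f$, is $\Sigma^B_0$-definable. We must also check that the comparison of the two argument tuples of $g$, a quantifier $\forall i<\mu$ over coded entries, is a bounded statement. Both checks reduce to the bound $t$ provided by $\Inv$ together with bounded comprehension. We would verify them by writing the bit-level definition of the codes explicitly and noting that every quantifier is bounded by $t$ or by $\mu$.
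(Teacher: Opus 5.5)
Your proof is correct and follows the paper's argument: fix a cell $c$, instantiate condition (ii) of $\Inv$ for both $C$ and $C'$, and use $A(C)=A(C')$ to see that $g$ receives identical arguments, so $C(c)=C'(c)$. Your extra remarks fill in formalization details the paper leaves implicit. These are the appeal to $\mathbf{SE}$, and the explicit construction of codes below $t$, which is needed for the universally quantified condition (ii) to be instantiated at all.
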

\begin{proof}
Assume $\Inv(x,y,A,B,t)$ and let $C,C' \in \Config$ satisfy
\[
A(C)=A(C').
\]

Recall that we defined configuration as a map 
$C : \mathbb{Z}^2 \to S$. Therefore in order to prove $C=C'$ it suffices to show that for every cell
$c \in [n+1]\times[m+2]$ we have
\[
C(c)=C'(c).
\]

Fix such a cell $c$ and write
\[
M(c)=(c_1,\dots,c_\mu).
\]

For each $i\le \mu$ let $s^i_j$ be the states of the cells in $N(c_i)$ in $C$, and let $t^i_j$ be the states of the cells in $N(c_i)$ in $C'$.

By definition of $A$ we have
\[
A(C)(c_i)=f(s^i_1,s^i_2,s^i_3,s^i_4,s^i_5)
\]
and
\[
A(C')(c_i)=f(t^i_1,t^i_2,t^i_3,t^i_4,t^i_5).
\]

Since $A(C)=A(C')$, it follows that for every $i\le\mu$,
\[
f(s^i_1,s^i_2,s^i_3,s^i_4,s^i_5)
=
f(t^i_1,t^i_2,t^i_3,t^i_4,t^i_5).
\]

Now apply condition (ii) from $\Inv(x,y,A,B,t)$ to the
configuration $C$. We obtain
\[
C(c)=
g\bigl(
f(s^1_1,s^1_2,s^1_3,s^1_4,s^1_5),
\dots,
f(s^\mu_1,s^\mu_2,s^\mu_3,s^\mu_4,s^\mu_5)
\bigr).
\]

Applying the same condition to $C'$ gives
\[
C'(c)=
g\bigl(
f(t^1_1,t^1_2,t^1_3,t^1_4,t^1_5),
\dots,
f(t^\mu_1,t^\mu_2,t^\mu_3,t^\mu_4,t^\mu_5)
\bigr).
\]

Since the corresponding arguments of $g$ are equal, the two
expressions are equal. Hence
\[
C(c)=C'(c).
\]

Since $c$ was arbitrary, we conclude
\[
C=C'.
\]
\end{proof}

\section{Lower bound for inverse automata}\label{sec:lower_bound}

Following the idea in Chapter~$4$ of~\cite{Cavagnetto} we shall interpret the inverse automaton to $A_\varphi$ as a propositional refutation of $\varphi$.

\begin{definition}[$P_{\mathrm{CA}}$]
We define a propositional proof system $P_{\mathrm{CA}}$ as follows.

Let $\varphi$ be a CNF formula. A refutation of $\varphi$ in $P_{\mathrm{CA}}$ is a cellular automaton
\[
B = (2,S,M,g)
\]
such that:
\begin{enumerate}
    \item $S = S_{n,m}$ is the set of states determined by $\varphi$ as in Section~\ref{sec:inverse_automata};
    \item if $t := |B|$ and  $\mu$ is the number of cells in $M$, then $\Inv(n,m,A_\varphi,B,t)$ holds.
\end{enumerate}
\end{definition}

\begin{lemma}
   $P_{\mathrm{CA}}$  is a sound and complete proof system.
\end{lemma}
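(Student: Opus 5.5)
We have to check the three conditions of a relational propositional proof system from Definition~\ref{def:proof_system}: the relation ``$B$ is a $P_{\mathrm{CA}}$-refutation of $\varphi$'' must be polynomial-time decidable, sound and complete. We treat them in this order.

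\emph{Polynomial-time decidability.} From $\varphi$ we compute $n,m$, the state set $S_{n,m}$ and the table of $f_\varphi$. By the bound proved above, $|A_\varphi| = O(|\varphi|^{11})$. Given a candidate $B=(2,S,M,g)$, we first check syntactically that its state set is $S_{n,m}$, that $M$ is a tuple of $\mu$ distinct vectors, and that $g$ is a table with $|S|^\mu$ rows. This gives $t=|B| \ge |S|^\mu\log|S|$. We then check the conjunct $|S|^{10\mu}<t$ of $\Inv$. Finally we check condition (ii) of Lemma~\ref{lm:invertible_condition} by brute force. For each cell $c$ (polynomially many) and each assignment of states to the cells of $N(M(c))$, we evaluate $f$ at each $c_i$, then $g$, and compare the result with the state of $c$. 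The number of assignments is at most $|S|^{5\mu}$, and this is polynomial in $t$ precisely because $\Inv$ forces $|S|^{10\mu}<t$. The purpose of the bound $t$ in the definition is exactly this. Since $\Inv$ is a $\Sigma^B_0$ formula whose quantifiers range below polynomial bounds in $t$ and $|\varphi|$, evaluating it takes time polynomial in $|B|+|\varphi|$.

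\emph{Soundness.} Suppose $\Inv(n,m,A_\varphi,B,t)$ holds. Condition (ii) holds in the standard model, so Lemma~\ref{lm:invertible_condition} gives $B(A_\varphi(C))=C$ for all $C\in\Config_{n,m}$. In particular $A_\varphi$ is injective on $\Config_{n,m}$; equivalently, this is Lemma~\ref{lm:invertible_implies_injective} read in the standard model. By Theorem~\ref{thm:injective_not_sat}, $\varphi$ is unsatisfiable, so $\neg\varphi\in\TAUT$.

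\emph{Completeness.} This is the step needing an explicit construction, and it is the main obstacle. If $\varphi$ is unsatisfiable, Theorem~\ref{thm:injective_not_sat} says $A_\varphi$ is injective on the finite set $\Config_{n,m}$. We must produce an inverse cellular automaton. Take $M$ to be the box $\{-(n+1),\dots,n+1\}\times\{-(m+2),\dots,m+2\}$, which is large enough that $M(c)$ covers the whole rectangle $[n+1]\times[m+2]$ for every cell $c$ in it. Define $g$ on a tuple of states as follows. If the tuple, read at the positions of $M$, equals $A_\varphi(C')$ around cell $c$ for some $C'\in\Config_{n,m}$, then $C'$ is unique by injectivity, and we output $C'(c)$. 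Otherwise $g$ outputs anything, say $\tuple q$. One subtlety is that $g$ must know which relative position corresponds to $c$. We read it off the observed configuration, which is the image of some $C'$ with known position offsets; alternatively we locate the rectangle via the quiescent border inside the window. Then $B(A_\varphi(C))=C$ on $\Config_{n,m}$, so condition (ii) holds by Lemma~\ref{lm:invertible_condition}. To satisfy $|S|^{10\mu}<t$ we simply pad $B$, for instance by taking the table size to be $t$ large enough; if needed we enlarge $M$ with dummy positions so that the table grows. We expect padding to work because the definition of proof system only requires existence of some $w$, not a short one. The remaining technical point is to verify the bookkeeping that the encoded table size $t$ dominates $|S|^{10\mu}$, possibly by fixing the encoding of $B$ to include an explicit padding field.
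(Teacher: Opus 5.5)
Your route is the paper's: decidability because $\Inv$ is a bounded condition checkable by brute force; soundness via injectivity of $A_\varphi$ and Theorem~\ref{thm:injective_not_sat}; and completeness by taking a neighbourhood covering the whole rectangle and letting $g$ output the corresponding cell of the unique preimage. Your remark on how $g$ locates $c$ (through the quiescent border visible in the window) improves on the paper's sketch.

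The gap is the step you leave open: the conjunct $|S|^{10\mu}<t$ of $\Inv$, with $t=|B|$. The table of $g$ has $|S|^\mu$ rows of $\log|S|$ bits, and your box neighbourhood has a short encoding. So $|B|$ is of order $|S|^\mu\log|S|$, which is the estimate used in the proof of Theorem~\ref{thm:main_lower_bound}. That is far below $|S|^{10\mu}$, so the $B$ you construct does not satisfy $\Inv$, and completeness is not established. Your remedies do not close this:
\begin{itemize}
\item The table size is not a free parameter; it is fixed at $|S|^\mu$ rows.
\item Adding dummy positions to $M$ multiplies $|S|^\mu\log|S|$ by $|S|$ per position but multiplies $|S|^{10\mu}$ by $|S|^{10}$, so the inequality only gets worse.
\end{itemize}

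The paper's own proof has the same hole: it concludes ``Hence $\Inv(n,m,A_\varphi,B,t)$ holds'' without checking the first conjunct. So this is a defect in the definition of $P_{\mathrm{CA}}$ as written, and a correct proof must say explicitly how the definition is read.

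The cleanest repair is to tie $t$ polynomially to $|B|$, e.g.\ $t:=|B|^{10}$. Then $t\ge(|S|^\mu\log|S|)^{10}>|S|^{10\mu}$ holds automatically. Decidability is unaffected, because the brute-force check over $N(M(c))$ costs about $|S|^{5\mu}\le|B|^5$ for every $B$. That also means it is not the $t$-conjunct that makes your check polynomial, contrary to what you wrote.

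An explicit padding field in the proof string would also work. But it changes the definition of the proof system and must be stated as such. Within the literal definition, the only padding available is through the encoding of $M$ itself, e.g.\ one extra neighbour vector with coordinates of enormous bit-length, which $g$ ignores.
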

\begin{proof}
    The provability predicate is $\Sigma^B_0$ and hence p-time decidable.

    Suppose $\Inv(n,m,A_\varphi,B,t)$ holds.
    By Lemma~\ref{lm:invertible_implies_injective} and modus ponens,
    this implies that $A_\varphi$ is injective.
    By Theorem~\ref{thm:injective_not_sat}, $A_\varphi$ is injective
    if and only if $\varphi$ is unsatisfiable.
    Therefore $P_{\mathrm{CA}}$ is sound.

For completeness, assume that the neighbourhood $M$ is large enough so that
for every cell $c \in[n+1]\times[m+2]$ the set $M(c)$ contains the whole
region $[n+1]\times[m+2]$. Thus the function $g$ "sees" the whole configuration
$C$.

If $A_\varphi$ is injective, then for every configuration $C \in A_\varphi(\Config)$ there exists
exactly one configuration $C'\in \Config$ such that
\[
A_\varphi(C') = C .
\]
We take this configuration $C'$ and define $g$ so that,
when it is applied to the states of the cells in $M(c)$ as they
appear in the configuration $C$, it outputs the state of the
cell $c$ in $C'$.

Thus $g$ reconstructs the predecessor configuration cell by cell,
so it is the inverse of $A_\varphi$. Hence $\Inv(n,m,A_\varphi,B,t)$ holds. Therefore $P_{\mathrm{CA}}$ is complete.
\end{proof}

\begin{lemma}\label{lm:V0_Q_sound}
$V^0$ proves that $P_{\mathrm{CA}}$ is sound.
    \[
    \mathrm{Refut}_{P_\mathrm{CA}}(x,y,t,B,\varphi) \longrightarrow \varphi \notin \SAT
    \]
\end{lemma}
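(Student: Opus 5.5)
We argue inside $V^0$ by contraposition, chaining two results that are already formalized there. Assume $\mathrm{CNF}(x,y,\varphi)$, $\mathrm{Refut}^0_{P_{\mathrm{CA}}}(x,y,t,B,\varphi,\tuple G)$, and, for contradiction, $\mathrm{Sat}(x,y,\tuple a,\varphi)$ for some assignment $\tuple a$. Unfolding the $\Sigma^B_0$ matrix of $\mathrm{Refut}_{P_{\mathrm{CA}}}$ (the auxiliary relations $\tuple G$ from Fagin's theorem are only witnesses of a p-time check), we obtain the two conjuncts of the definition of $P_{\mathrm{CA}}$. First, the state set of $B$ is $S_{n,m}$. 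Second, $\Inv(n,m,A_\varphi,B,t)$ holds. Since $A_\varphi$ and $S_{n,m}$ are $\Sigma^B_0$-definable from $\varphi$ by the lemmas of Subsection~\ref{subsub:coding_af}, this statement makes sense in $V^0$. For this step we choose $\mathrm{Refut}^0$ so that it literally contains $\Inv$ as a conjunct, rather than some opaque simulation of a Turing machine; the statement allows an arbitrary NP-definition, so this choice is legitimate. If it must be the generic Fagin formula, one first proves in $V^0$ that it implies $\Inv$, which is routine because the verification is a $\Sigma^B_0$ property.

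Next, apply Lemma~\ref{lm:invertible_implies_injective}, which is already a $V^0$-theorem. From $\Inv(n,m,A_\varphi,B,t)$ it gives $\forall C,C'\in\Config\,(A_\varphi(C)=A_\varphi(C')\rightarrow C=C')$. Instantiating the set quantifiers is allowed once we exhibit the two configurations.

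On the other hand, we run the proof of Theorem~\ref{thm:V0_injective_unsat} with the satisfying assignment $\tuple a$. By Lemma~\ref{lm:V_0_table} and $\Sigma^B_0$-comprehension, $V^0$ constructs $\Table_\varphi(\tuple a)$ and the all-ones-label configuration $C\sim\Table_\varphi(\tuple a)$. The formalized Lemma~\ref{lm:table_loc_correct} shows that every cell is locally correct. The output cell carries $\pc=1$, because $\mathrm{Sat}$ and the defining formula for $\pc$ in Lemma~\ref{lm:V_0_table}(D) coincide at $i=n$. Hence every cell is blue, and both configurations map to $\Table_\varphi(\tuple a)$. They differ in the label of the cell $(0,0)$, so the injectivity statement fails, and we have a contradiction. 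Therefore $\neg\mathrm{Sat}(x,y,\tuple a,\varphi)$, which is exactly the hypothesis of Theorem~\ref{thm:reflection_theorem}.

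\textbf{Main obstacle.} The delicate point is the interface of the instantiation. Condition (ii) in $\Inv$ quantifies over codes $s_1,\dots,s_5<t$ of $\mu$-tuples of states. To apply it to a concrete configuration, $V^0$ must show that the tuple of states of $C$ on $M(c)$ and on $N(c_i)$ has a code below $t$. That requires $\Sigma^B_0$-comprehension to build the code as a set with bit $\langle i,j\rangle$ equal to the $i$-th bit of the $j$-th state, together with the bound $|S|^{10\mu}<t$ guaranteed by the first conjunct of $\Inv$. We would handle this uniformly by building the code bitwise from $C$. We would then check that every ingredient (local correctness, the choice of the successor via $\mathrm{suc}$, and the XOR update) is a $\Sigma^B_0$ property over cells indexed by numbers $\le n,m$, so no induction beyond $\Sigma^B_0$ is needed. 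The only inductive fact is the correctness of the values $\pd$ and $\pc$ in the table, and the table defines these directly by bounded formulas, with no recursion involved.
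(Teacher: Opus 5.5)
Your proposal is correct and follows the paper's proof. The paper likewise unfolds $\mathrm{Refut}_{P_{\mathrm{CA}}}$ to $\Inv(n,m,A_\varphi,B,t)$, applies Lemma~\ref{lm:invertible_implies_injective} to get injectivity of $A_\varphi$, and then invokes Theorem~\ref{thm:V0_injective_unsat}; you inline that theorem's argument, namely that the all-$0$ and all-$1$ labelled copies of $\Table_\varphi(\tuple a)$ have the same image. You also make two points explicit that the paper leaves implicit: the output cell carries $\pc=1$ because $\mathrm{Sat}$ coincides with the defining formula for $\pc$ at $i=n$, and applying condition (ii) requires codes below $t$. Since those codes are numbers rather than sets, they come from a $\Sigma^B_0$-induction on the number of bits, not directly from comprehension as your wording suggests.
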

\begin{proof}
Assume $ \mathrm{Refut}_{P_\mathrm{CA}}(n,m,t,B,\varphi)$, where $x:=n$ and $y:= m$ are determined by $\varphi$.  
By definition of $ \mathrm{Refut}_{P_\mathrm{CA}}$ (see Subsection~\ref{reflection_principle}), this implies that
$\Inv(n,m,A_\varphi,B,t)$ holds. By Lemma~\ref{lm:invertible_implies_injective}, $V^0$ proves that if
$\Inv(n,m,A_\varphi,B,t)$ holds, then the cellular automaton $A_\varphi$
is injective. By Theorem~\ref{thm:V0_injective_unsat}, $V^0$ proves that if
$A_\varphi$ is injective, then the formula $\varphi$ is unsatisfiable.
Therefore $\varphi \notin \SAT$.
\end{proof}

Recall the formula $\lnot \mathrm{ontoPHP}_k$ from Example~\ref{ex:pigeonhole}. Note that $\lnot \mathrm{ontoPHP}_k$ has $k(k+1)$ variables and $O(k^3)$ clauses.

\begin{theorem}\label{thm:main_lower_bound}
There exists a constant $\epsilon > 0$ such that the following holds for all sufficiently large $k$.
Let 
\[
\varphi := \lnot \mathrm{ontoPHP}_k,
\]
and let $B=(2,S,M,g)$ be a cellular automaton that is inverse to $A_\varphi$ \linebreak on $\Config_{n,m}$ with $n, m$ determined by $\varphi$. Let $\mu := |M|$ be the size of the neighbourhood of $B$, and let $t$ be the size of $B$.

Then
\[
t \ge 2^{k^\epsilon}.
\]
In particular, for all sufficiently large $k$,
\[
\mu \ge k^\epsilon.
\]
\end{theorem}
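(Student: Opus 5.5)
The plan is to combine the formalized soundness of $P_{\mathrm{CA}}$ with the simulation theorem and then apply Ajtai's lower bound. First, Lemma~\ref{lm:V0_Q_sound} says that $V^0$ proves the soundness of $P_{\mathrm{CA}}$ for refutations of CNF formulas. Concretely, $V^0$ proves
\[
\mathrm{CNF}(x,y,\varphi)\wedge \mathrm{Refut}^0_{P_{\mathrm{CA}}}(x,y,t,B,\varphi,G_1,\dots,G_t)\rightarrow \neg\mathrm{Sat}(x,y,\tuple a,\varphi).
\]
This holds because $\mathrm{Refut}_{P_{\mathrm{CA}}}$ is essentially the $\Sigma^B_0$ formula $\Inv$, so no existential witnesses $G_i$ are needed. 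Lemma~\ref{lm:invertible_implies_injective} and Theorem~\ref{thm:V0_injective_unsat} are both proved inside $V^0$. Theorem~\ref{thm:reflection_theorem} therefore gives a constant $d$ and a p-simulation of $P_{\mathrm{CA}}$ by $F_d$ on refutations of CNFs.

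Quantitatively, there is a polynomial $p$ such that from any $P_{\mathrm{CA}}$-refutation $B$ of $\varphi$ of size $t$ we obtain an $F_d$-refutation of $\varphi$ of size at most $p(|\varphi|+t)$. I would check here that the Paris--Wilkie parameters are polynomial in $n,m,t$, which is exactly what Theorem~\ref{thm:paris_wilkie_v0} provides. I also need $t$ to dominate the codes of the relevant objects: the bound $|S|^{10\mu}<t$ is built into $\Inv$, and $t=|B|\ge |S|^\mu$ holds anyway by Lemma~\ref{lm:CA_bound} once $\mu\ge 10$ or so. If this fails one can pad, but I don't expect it to matter.

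Now take $\varphi=\lnot\mathrm{ontoPHP}_k$. It has size $|\varphi|=O(k^3)$ and is unsatisfiable. Let $B$ be an inverse of $A_\varphi$ on $\Config_{n,m}$. Then $\Inv(n,m,A_\varphi,B,t)$ holds by Lemma~\ref{lm:invertible_condition}, possibly after adjusting the bookkeeping bound $t$ polynomially. So $B$ is a $P_{\mathrm{CA}}$-refutation, and we obtain an $F_d$-refutation of size at most $p(O(k^3)+t)$. Fix $\delta$ with $0<\delta<5^{-d}$. Ajtai's theorem (Theorem~\ref{thm:ajtai}) gives $p(O(k^3)+t)\ge 2^{k^\delta}$ for large $k$. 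Since $p$ is a polynomial of some degree $e$, and $k^3$ is negligible against $2^{k^\delta}$, this forces $t\ge 2^{k^\delta/(e+1)}\ge 2^{k^\epsilon}$ for any fixed $\epsilon<\delta$ and large $k$.

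For the neighbourhood bound, Lemma~\ref{lm:CA_bound}, or its obvious analogue for neighbourhood size $\mu$, gives $t\le |S|^\mu\log|S|$ up to the cost of encoding $M$. Here $|S|=O(nm)=\mathrm{poly}(k)$, so $\log t\le O(\mu\log k)$. Hence $\mu\ge k^\epsilon/O(\log k)\ge k^{\epsilon'}$, and shrinking $\epsilon$ makes this consistent with the statement.

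The main obstacle is the first step: making sure the hypothesis of Theorem~\ref{thm:reflection_theorem} is literally met. The refutation predicate has to be written as a $\Sigma^B_0$ formula whose propositional translation has size polynomial in $t$, even though $\mu$ is a variable; this is the role of the parameter $t$ in $\Inv$. The soundness chain (Lemma~\ref{lm:invertible_implies_injective} followed by Theorem~\ref{thm:V0_injective_unsat}) must also go through uniformly in $V^0$, including the fact that $A_\varphi$ is $\Sigma^B_0$-definable from $\varphi$. I would first confirm that all quantifiers in $\Inv$ are bounded by $t$ and the polynomial bounds $t(n,m)$; the remaining steps are routine arithmetic with the polynomial $p$.
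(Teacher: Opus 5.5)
Your proposal follows the paper's proof essentially step for step. $V^0$-provable soundness of $P_{\mathrm{CA}}$ (Lemma~\ref{lm:V0_Q_sound}) feeds Theorem~\ref{thm:reflection_theorem} to give an $F_d$ p-simulation. Ajtai's bound with $\delta<5^{-d}$ then forces $t\ge 2^{k^{\epsilon}}$, and the table-size bound $t\le |S|^{\mu}\log|S|$ with $|S|=\mathrm{poly}(k)$ turns this into $\mu\ge k^{\epsilon}$ after shrinking $\epsilon$.

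One small correction: Lemma~\ref{lm:CA_bound} is an upper bound, not a lower bound. Since $|B|\approx|S|^{\mu}\log|S|<|S|^{10\mu}$, the clause $|S|^{10\mu}<t$ of $\Inv$ is not met by $t=|B|$ itself. So your fallback of padding $t$ polynomially (e.g.\ $t=(|B|+1)^{10}$) is actually needed, not optional. It is harmless, because the resulting constant-factor loss in the exponent is again absorbed by shrinking $\epsilon$.
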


\begin{proof}
    
In the previous Lemma~\ref{lm:V0_Q_sound} we proved that
\[
V^0 \vdash
\bigl[\mathrm{Refut}_{P_{\operatorname{CA}}}(x,y,s,B,\varphi)
\rightarrow
\varphi \notin SAT\bigr] .
\]
Thus $P_{\operatorname{CA}}$ satisfies the reflection principle from
Theorem~\ref{thm:reflection_theorem}. By Theorem~\ref{thm:reflection_theorem}, there exists a constant $d$
such that $F_d$ p-simulates $P_{\operatorname{CA}}$ for
refutations of CNF formulas.

We now apply this for $\varphi=\lnot \mathrm{ontoPHP}_k$ and $B$. It is a CNF formula. By the simulation, there exists an $F_d$-refutation of $\varphi$
whose size is at most $(t + |\varphi|)^{O(1)} = t^{O(1)}$ as $t = |B| \geq |\varphi|$.

However, by Ajtai's theorem (Theorem~\ref{thm:ajtai}), every $F_d$-refutation \linebreak
of $\lnot \mathrm{ontoPHP}_k$ has size at least
$2^{\,k^\delta}$
for some constant $\delta>0$.  It follows that 
$t \geq 2^{k^{\delta'}}$ for any $\delta' < \delta$ and $k >>0$, so every $P_{\operatorname{CA}}$-refutation of $\varphi$ has
exponential size.

Putting this into the previous inequality and using the upper bound on $t$ from Lemma~\ref{lm:CA_bound} and the bound on the set of states from Equation~\eqref{eq:set_of_states_bound}, we obtain
\[
2^{k^{\delta'}} \leq t \leq (c n m)^\mu \log(c n m) \leq  (c n m)^{\mu+1},
\]
for some constant $c>0$. Since $m=k(k+1)$ and $n=O(k^3)$, it follows that
\[
2^{k^{\delta'}} \le (d k^5)^{\mu +1},
\]
for some constant $d>0$.

This implies
\[
\mu \geq \Omega\!\left(\frac{k^{\delta'}}{\log k}\right).
\]

In particular, there exists $0<\epsilon<\delta'$ such that for all sufficiently large $k$
\[
\mu \ge k^{\epsilon} .
\]

As $\epsilon < \delta' < \delta$, 
\[
t \geq 2^{k^\epsilon}
\]
also holds and we are done.
\end{proof}

\section*{Open problems}

There are two open problems which may be interesting to explore:

\begin{enumerate}
\item Can the inverse implication in Theorem~\ref{thm:injective_not_sat} be formalized in $V^0$? 
We note that it can be formalized in $V^1$ (see~\cite{Cook_Nguyen_2010} for its definition). It remains open whether the whole argument can be carried out in a theory weaker than $V^1$.

    \item What is the strength of the proof system $P_\mathrm{CA}$? 
    In particular, does $P_\mathrm{CA}$ p-simulate resolution, or even bounded-depth Frege systems $F_d$ for some $d \geq 2$?
\end{enumerate}

\section*{Funding}

Maryia Kapytka was supported by Charles University Research Center program No. UNCE/24/SCI/022 and the project SVV-2025-260837.

\section*{Acknowledgements}

I would first like to express my gratitude to my advisor, Jan Krajíček, for his continuous support, guidance, and dedication. I am also deeply thankful to my friend and colleague, Ondřej Ježil, for patiently answering my many questions, reviewing parts of this paper, and providing invaluable emotional support.

I further wish to thank my friends and colleagues for their valuable assistance with grant applications and mathematical discussions, namely  Mykyta Narusevych, Daria Pavlova, Kateřina Panešová, Maroš Grego, Gabriel Krejčí, and Mavis Otrubová.

I acknowledge the use of ChatGPT (OpenAI) and Claude (Anthropic) to refine the academic language and improve the flow of this manuscript. The author reviewed and edited the content to ensure accuracy and takes full responsibility for the final work.

\bibliographystyle{plain}
\bibliography{bibliography}

\end{document}